\documentclass[11pt]{amsart}

\usepackage{amsmath, amssymb, amsthm,mathtools,tikz,tikz-cd,graphicx,color,stmaryrd,hyperref,enumerate,mathdots,mathrsfs,dynkin-diagrams}
\usepackage[new]{old-arrows}
\usepackage{colonequals}
\usepackage[shortlabels]{enumitem}
\usepackage[noadjust]{cite}
\usepackage[utf8]{inputenc}

\usepackage{appendix}

\usepackage[margin=1in]{geometry}
\usepackage{cleveref}

\usepackage[english]{babel}

\hypersetup{
    colorlinks,
    citecolor=blue,
    filecolor=blue,
    linkcolor=blue,
    urlcolor=blue
}

\DeclareSymbolFont{extraup}{U}{zavm}{m}{n}
\DeclareMathSymbol{\varheart}{\mathalpha}{extraup}{86}

\newcommand{\End}{\operatorname{End}}

\newcommand{\sgn}{\operatorname{sgn}}

\newcommand{\Ind}{\operatorname{Ind}}

\newcommand{\CC}{{\mathbb C}}

\newcommand{\ZZ}{{\mathbb Z}}

\newcommand{\heckeinduced}{\Ind_{\mathcal H_k \otimes \mathcal H_{n-k}}^{\mathcal H_n} \CC_\mathrm{triv}}
\newcommand{\TLinduced}{\Ind_{\TL_k \otimes \TL_{n-k}}^{\TL_n} \CC_\mathrm{triv}}

\newcommand{\TL}{\operatorname{TL}}

\newcommand{\heart}{\mathbin{\heartsuit}}
\newcommand{\dia}{\mathbin{\diamondsuit}}
\newcommand{\flip}{\operatorname{flip}}
\newcommand{\mH}{\mathcal H}
\newcommand{\mN}{\mathcal N}
\newcommand{\mM}{\mathcal M}

\renewcommand{\sf}{\sigma^{\mathrm{flip}}}

\newcommand{\eps}{\epsilon}
\newcommand{\quantum}{\mathcal U_q(\mathfrak{sl}_2)}
\newcommand{\bU}{\mathbf U}
\newcommand{\id}{\operatorname{id}}
\newcommand{\im}{\operatorname{Im}}
\renewcommand{\phi}{\varphi}
\renewcommand{\hom}{\operatorname{Hom}}

\DeclareMathOperator{\Hom}{Hom}

\newtheorem{thm}{Theorem}[section]

\newtheorem{lem}[thm]{Lemma}

\newtheorem{cor}[thm]{Corollary}
\newtheorem{prop}[thm]{Proposition}

\newtheorem{prop-def}[thm]{Proposition-Definition}

\theoremstyle{definition}
\newtheorem{defn}[thm]{Definition}
\newtheorem{ex}[thm]{Example}

\theoremstyle{remark}
\newtheorem*{rem}{Remark}

\title[The Dual Canonical Basis via the TL Algebra]{The Dual Canonical Basis in the Spin Representation \\ via the Temperley-Lieb Algebra}

\makeatletter
\let\@wraptoccontribs\wraptoccontribs
\makeatother

\address{PRIMES-USA}
\email[R.~Chen]{rachelrxchen@gmail.com}
\author{Rachel Chen}
\begin{document}

\begin{abstract}
The spin representation $(\CC^2)^{\otimes n}$ has a dual canonical basis introduced by Lusztig that is important in many areas of algebra, geometry, and physics. Khovanov observed that a portion of the dual canonical basis can be viewed diagrammatically through the Temperley-Lieb algebra. We provide a simpler construction that we generalize to the entire dual canonical basis, and write explicit formulas to compute the dual canonical basis, and thus the canonical basis of the spherical module, as a byproduct. We reprove some of Khovanov's results using our new perspective. Furthermore, we use the Hecke algebra to reprove the fact that the canonical basis is indeed dual to the dual canonical basis, leading to similar results about the canonical basis in $\mM^*$ and $\mN^*$, the dual spaces to the spherical and aspherical modules, as a byproduct. Finally, we present an alternative axiomatic definition of the canonical basis using diagrams. 
%
\end{abstract}

\maketitle



\section{Introduction}
\subsection{Background, History, and Motivation}
The spin representation $(\CC^2)^{\otimes n}$ is an important object in various areas of mathematics and physics. Since it was first introduced by Cartan in 1913 \cite{cartan} and applied to physics by Dirac in 1928 \cite{dirac}, mathematicians and physicists have studied it extensively. The spin representation parametrizes configurations of $n$ electrons, each of which has spin-up or spin-down. 

If one considers $(\CC^m)^{\otimes n}$ as a $(\mH_n, \mathcal U_q(\mathfrak{sl}_m))$-bimodule, quantum Schur-Weyl duality states that if $m \geq n$, the algebra of endomorphisms of $(\CC^m)^{\otimes n}$ that commute with the action of $\mathcal U_q(\mathfrak{sl}_m)$, where $q \in \CC$ is not a root of unity, is precisely $\mH_n$, the Hecke algebra with $n$ generators, which has a natural presentation as an algebra of diagrams on a plane from $n$ vertices to $n$ vertices subject to certain quadratic relations. However, as we are concerned with the specific case of $m = 2$, the algebra of endomorphisms will be a quotient of $\mH_n$. This quotient is precisely the algebra generated by diagrams on the plane from $n$ vertices to $n$ vertices without crossings, with no additional relations, called the Temperley-Lieb algebra $\TL_n$ (see Figure~\ref{fig:exTL}). First introduced by Temperley and Lieb in 1971 \cite{tl}, the Temperley-Lieb algebra has applications in representation theory and knot theory, both due to its connection with the spin representation and quantum groups, and also independently. 

\begin{figure}[!ht]
\tikzset{every picture/.style={line width=0.75pt}} 

\begin{tikzpicture}[scale=.25]
    \node at (1,9){$\bullet$};
    \node at (5,9){$\bullet$};
    \node at (9,9){$\bullet$};
    \node at (1,0){$\bullet$};
    \node at (5,0){$\bullet$};
    \node at (9,0){$\bullet$};
\draw (0,0) -- (10,0);
\draw (0,9) -- (10,9);
\draw (1,0) arc (180:0:2);
\draw (9,9) arc (360:180:2);
\draw (1,9) .. controls (1,1) and (9,8) .. (9,0);
\end{tikzpicture}
\hspace{.5cm}
\begin{tikzpicture}[scale=.25]
    \node at (1,9){$\bullet$};
    \node at (5,9){$\bullet$};
    \node at (9,9){$\bullet$};
    \node at (13,9){$\bullet$};
    \node at (1,0){$\bullet$};
    \node at (5,0){$\bullet$};
    \node at (9,0){$\bullet$};
    \node at (13,0){$\bullet$};
\draw (0,0) -- (14,0);
\draw (0,9) -- (14,9);
\draw (13,0) arc (0:180:2);
\draw (1,9) arc (180:360:2);
\draw (1,0) .. controls (1,8) and (9,1) .. (9,9);
\draw (5,0) .. controls (5,8) and (13,1) .. (13,9);
\end{tikzpicture}
\hspace{.5cm}
\begin{tikzpicture}[scale=.25]
    \node at (1,9){$\bullet$};
    \node at (5,9){$\bullet$};
    \node at (9,9){$\bullet$};
    \node at (13,9){$\bullet$};
    \node at (1,0){$\bullet$};
    \node at (5,0){$\bullet$};
    \node at (9,0){$\bullet$};
    \node at (13,0){$\bullet$};
\draw (0,0) -- (14,0);
\draw (0,9) -- (14,9);
\draw (5,0) arc (180:0:2);
\draw (1,9) arc (180:360:2);
\draw (9,9) arc (180:360:2);
\begin{scope}
\clip (0,0) rectangle (14,9);
\draw (7,0) ellipse (6 and 4);
\end{scope}
\end{tikzpicture}
\caption{Examples of some diagrams in the Temperley-Lieb algebra. }
\label{fig:exTL}
\end{figure}

One breakthrough in the study of representations of quantum groups is Lusztig's discovery of the dual canonical basis, a basis satisfying certain upper triangularity and self-dual conditions in an arbitrary finite dimensional representation of a semisimple Lie algebra, in 1990 \cite{lusztigintro}. Lusztig's dual canonical basis quickly led to discoveries about its relation to the crystal basis \cite{kashiwara} and algebraic geometry \cite{lusztigintro2}. Since then, mathematicians have studied this dual canonical basis and related them with different areas of mathematics, including cluster algebras, total positivity, and categorification (see \cite{berenstein, qin, webster}). While leading to many breakthroughs, Lusztig's dual canonical basis was defined axiomatically and could not be explicitly computed. An important breakthrough is due to Khovanov \cite{khovanov}, who first related Lusztig's dual canonical basis and the Hecke algebra through the graphical calculus of diagrams, allowing mathematicians to understand the dual canonical basis explicitly. The understanding of this (dual) canonical basis also allows the explicit computation of characters of certain irreducible highest-weight $\mathfrak{sl}_n$-modules. Our paper expands on Khovanov's work. 

In the general case, the explicit computation of the dual canonical basis is extremely complicated. However, for the specific case of $\quantum$, describing the dual canonical basis in $(\CC^2)^{\otimes n}$ is sufficient, as every finite dimensional irreducible representation is a quotient of $(\CC^2)^{\otimes n}$, and the dual canonical basis of the quotient is a subset of the dual canonical basis. 

\subsection{Structure of the Paper}
In the first six sections, we cover preliminaries; our results start from Section~\ref{bijectionsection}. 
 
Khovanov inductively constructs the dual canonical basis, and we first examine the dual canonical basis through the lens of the Temperley-Lieb algebra in Section~\ref{bijectionsection}, asserting the isomorphism $\bigoplus_{0 \leq k \leq n} \Ind_{\TL_k \otimes \TL_{n-k}}^{\TL_n}\CC_\mathrm{triv} \cong (\CC^2)^{\otimes n}$ and proving that the diagrammatic basis on the left hand side maps to the dual canonical basis on the right hand side, which we can explicitly construct. We use our results to reprove Khovanov's inductive computation though a new diagrammatic perspective in Section~\ref{computationinductive}. As a byproduct of our results, in Section~\ref{computationexplicit}, we write an explicit, non-inductive formula for the dual canonical basis in Theorem~\ref{dualcomputations}, which also leads to an explicit formula for the canonical basis of the spherical module in Corollary~\ref{sphericalcor}. In his thesis, Khovanov poses the question of how his results are related to the results of the Temperley-Lieb algebra and the Hecke algebra in \cite{fg}, and our work answers that. 

Khovanov also describes the dual canonical basis of the $\quantum$-invariant subspace of $(\CC^2)^{\otimes n}$ in terms of the Temperley-Lieb algebra. In Section~\ref{cor1.10section}, we reprove his result in a simpler way using diagrams in Theorem~\ref{embed}, that can be generalized to the entire dual canonical basis, as we describe in the previous section. 

Besides the dual canonical basis, another important basis in $(\CC^2)^{\otimes n}$ is the canonical basis, which is dual to the dual canonical basis. We revisit this fact and prove this duality through the perspective of the Hecke algebra in Section~\ref{dualthroughhecke}, by introducing a pairing between the spherical and aspherical modules. Our approach allows us to work with the spin representation through the Hecke algebra, which is more convenient as the bar involution in the Hecke algebra is simpler than the involution in $(\CC^2)^{\otimes n}$. As a byproduct of our work in the Hecke algebra, we also obtain formulas for the computation of the canonical bases in $\mM^*$ and $\mN^*$, the dual spaces of the spherical and aspherical modules. Finally, in Section~\ref{canonicalsec}, we describe an alternative axiomatic definition of the canonical basis, through a diagrammatic perspective, in Theorem~\ref{canonicalthm}. 

\subsection{Future Work}
Our approach via the Temperley-Lieb algebra not only allows mathematicians to visually understand these bases, but also has applications beyond making the dual canonical basis more tangible to work with. If one studies the analog of the spin representation in the affine setting (i.e. replacing the symmetric group by its affine analog), there is no known algebraic definition of the canonical basis in it. Although the analog of the dual canonical basis can be defined geometrically as a basis of classes of irreducible objects in the heart of a certain exotic perverse t-structure \cite{bezrukavnikov, bezrukavnikov2}, this geometric definition is complicated and doesn't give insight into its explicit computation. However, one can show that the canonical basis is still related to the diagrammatic basis of the affine Temperley-Lieb algebra, the algebra generated by the diagrams on a cylinder, in a similar way as explained in the finite case in this paper. A deeper understanding of the relation of the Temperley-Lieb algebra in the finite setting may allow us to understand the relation in the affine setting better, importantly allowing us to explicitly compute the characters of $\widehat{\mathfrak{sl}_n}$-modules. This strategy of computing the characters of $\widehat{\mathfrak{sl}_n}$-modules proved quite effective for a specific case in Proposition 2.19 and Proposition 2.21 of \cite{BKK}.

\section{The Temperley-Lieb Algebra}
In this section, we introduce the Temperley-Lieb Algebra, the object that is central to our paper. Throughout this paper, let $q$ be a complex number that is not a root of unity. 
\begin{defn}
An $n$-\emph{diagram} is two parallel lines with $n$ vertices on both the top and bottom line, where the vertices are connected by edges so that the edges are between the parallel lines, the edges do not intersect each other, and each vertex is the endpoint of exactly one edge. If two $n$-diagrams are homotopic to each other, they are considered the same diagram. 
\end{defn}

We refer to the edges of a diagram by \emph{links}. Call a link 
\begin{enumerate}
\item[$\bullet$] \emph{straight} if it connects the $i$th vertex on the bottom line to the $i$th vertex on the top line, 
\item[$\bullet$] \emph{quasi-simple} if it connects two vertices on the same line, and
\item[$\bullet$] \emph{simple} if it connects two adjacent vertices on either line. 
\end{enumerate}

\begin{defn}[Diagram Algebra]
Let $\ell_n$ denote the set of all $n$-diagrams and let $\beta = -q-q^{-1}$ be a complex number. The \emph{diagram algebra}, denoted $\CC \ell_n(\beta)$, is the algebra over $\CC$ generated by the basis $\ell_n$, with multiplication defined as the concatenation of diagrams, with a factor of $\beta$ multiplied for each closed loop formed during concatenation. 
\end{defn}

Let $e_i$ refer to the diagram with a simple link connecting the $i$th and $(i+1)$th vertices on each line and with straight links everywhere else (see Figure~\ref{fig:genex}). 

\begin{figure}[!ht]
\centering
\tikzset{every picture/.style={line width=0.75pt}} 

\begin{tikzpicture}[scale = .25]
    \node at (1,9){$\bullet$};
    \node at (5,9){$\bullet$};
    \node at (9,9){$\bullet$};
    \node at (13,9){$\bullet$};
    \node at (1,0){$\bullet$};
    \node at (5,0){$\bullet$};
    \node at (9,0){$\bullet$};
    \node at (13,0){$\bullet$};
\draw (0,0) -- (14,0);
\draw (0,9) -- (14,9);
\draw (9,0) arc (0:180:2);
\draw (5,9) arc (180:360:2);
\draw (1, 0) -- (1, 9);
\draw (13, 0) -- (13, 9);
\end{tikzpicture}

\caption{The diagram $e_2$ in $\CC \ell_4(\beta)$}
\label{fig:genex}
\end{figure}
\begin{lem}[\cite{degroot}]
The set $\{e_1, \dots, e_{n-1}\}$ generates $\CC \ell_n(\beta)$; in other words, concatenation of the diagrams $\{e_1, \dots, e_{n-1}\}$ yield all possible $n$-diagrams. 
\end{lem}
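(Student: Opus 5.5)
We will show that every $n$-diagram $D$ is, up to homotopy, a product of the $e_i$, with no loop factors of $\beta$ arising. We argue by induction on the number $c(D)$ of quasi-simple links on the top line of $D$. Since links are non-crossing, this number equals the number of quasi-simple links on the bottom line. If $c(D)=0$, then every link joins top to bottom. Non-crossing then forces the $i$th top vertex to be joined to the $i$th bottom vertex, so $D$ is the identity diagram, which is the empty product. (If one insists on products of at least one generator, note that $e_1$ is idempotent up to the scalar $\beta\neq 0$, since $q$ is not a root of unity; so in the algebra the identity is $\beta^{-1}e_1^2$ only when $n\ge 2$. We will treat the identity as the unit, and otherwise argue only for $c(D)\ge 1$.)

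For the inductive step, assume $c(D)\ge 1$. First we locate simple links. Any quasi-simple link on a line encloses an innermost one, and an innermost quasi-simple link must be simple: the vertices strictly between its endpoints can only connect among themselves, so there are none. Hence $D$ has a simple link $(i,i+1)$ on the bottom line and a simple link $(j,j+1)$ on the top line.

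The key step is to factor $D$ as a product of a generator with a diagram having fewer quasi-simple links, or to shift the position of a cup. We try the following normal-form approach.

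\textbf{Step 1 (reduce to $i=j$-type moves).} We use the relation that concatenating $e_{k}$ beneath $e_{k\pm1}$ moves a cap. Concretely, the diagram $e_{k+1}e_k$ connects the simple link at $(k+1,k+2)$ on top with the simple link at $(k,k+1)$ on bottom, via one through-strand. So multiplying by a chain $e_{j}e_{j\pm1}\cdots$ lets us slide a top cap from position $j$ to any position, with no closed loops formed.

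\textbf{Step 2 (peel off a cap).} Write $D=D'\cdot e_i$ (or $e_j\cdot D'$), where $D'$ is obtained as follows. Cut the simple bottom link $(i,i+1)$ of $D$. Then reconnect so that $D'$ has straight strands at positions $i$ and $i+1$ joining the two former endpoints of the cap, after possibly routing through an adjacent link. We then check that the concatenation $D'e_i$ recovers $D$ by homotopy without creating loops. Two cases arise. If $D'$ can be chosen with $c(D')=c(D)-1$, the induction closes directly. Otherwise we rely on Step 1: by the standard bijection with pairs of non-crossing half-diagrams, write $D$ as (top half with $c$ caps) $\circ$ (bottom half with $c$ cups) joined by $n-2c$ through-strands. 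We then take the explicit product $\prod$ of descending chains $e_{a}e_{a+1}\cdots e_{b}$ that build each half, nested from innermost arcs outward.

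The main obstacle is verifying Step 2, namely that concatenation reproduces exactly $D$ with no extraneous loops and the correct pairing of through-strands. To handle this, we will prefer the explicit-construction version. For each innermost cup we multiply by the appropriate $e$, and we transport cups using the chains $e_ke_{k+1}\cdots$. We check on the half-diagrams that each multiplication only adds one arc and straightens strands, which is a direct picture argument.
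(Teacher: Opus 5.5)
The paper gives no proof of this lemma; it is quoted from de Groot. So your argument has to stand on its own, and as written it does not: the inductive step is never carried out.

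\textbf{The gap.} The diagram $D'$ of Step 2 is never actually defined (``reconnect \dots\ after possibly routing through an adjacent link''). More seriously, induction on $c(D)$ cannot close in general.
\begin{itemize}
\item A product has at most as many through-strands as either factor, so $c(e_jD')\ge\max(1,c(D'))$. Multiplying by a generator never removes a cup, so the case $c(D')=c(D)-1$ is the exception, not the rule.
\item For example, $e_1e_2\in\TL_3$ has one cup on each line. The only cup-free diagram is the identity, and $e_1e_2\neq e_1,e_2$. So $e_1e_2$ has no factorization $e_jD'$ or $D'e_i$ with $c(D')=0$.
\item For this generic case you only gesture at ``descending chains \dots\ nested from innermost arcs outward'' without specifying the words. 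You then defer to ``a direct picture argument'' exactly the check that the product closes no loops and matches the through-strands correctly. That check is the whole content of the lemma.
\item Your Step 1 slide is a correct local move: if $(j,j+1)$ is a cup, $e_{j+1}$ moves it to $(j+1,j+2)$ and hands the old partner of $j+2$ to $j$. But tracking what a sequence of such slides does to the other links is precisely the bookkeeping that is missing.
\end{itemize}
Separately, your parenthetical about the identity is wrong: $\beta^{-1}e_1^2=e_1$. In fact the identity is not a nonempty product of generators at all, since every such product has a cup, so it must be taken as the empty product.

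\textbf{How to repair it.} Your half-diagram idea does work once $c(D)$ is replaced by a potential that really decreases.
\begin{itemize}
\item By planarity the through-strands are matched in order, so $D$ is determined by its top and bottom link states.
\item Let $E_c=e_1e_3\cdots e_{2c-1}$. Both of its link states are $\lambda_0$: cups $(1,2),\dots,(2c-1,2c)$, then defects.
\item It suffices to write every link state $\lambda$ with $c\ge 1$ cups as $e_{j_1}\cdots e_{j_r}\lambda_0$, where no step closes a loop or joins two defects. Such a word, multiplied on the top side of $E_c$, changes only the top half. The analogous word on the bottom side then gives $D=A\,E_c\,B$.
\end{itemize}
Two reverse moves suffice.
\begin{itemize}
\item \emph{Removing nesting.} If some cup encloses another, pick one of minimal width that encloses something. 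It is a cup $(a,b)$ whose first enclosed cup is simple, namely $(a+1,a+2)$. Then $\lambda=e_{a+1}\lambda'$, where $\lambda'$ has the cups $(a,a+1)$ and $(a+2,b)$ instead. This lowers the number of nested pairs by one.
\item \emph{Sliding left.} If there is no nesting and $\lambda\neq\lambda_0$, some defect at $j-1$ is immediately followed by a cup $(j,j+1)$. Then $\lambda=e_j\lambda'$, where $\lambda'$ has the cup at $(j-1,j)$ and the defect at $j+1$. This keeps the nesting at zero and lowers the sum of left endpoints of cups.
\end{itemize}
Induction on the pair (nested pairs, sum of left endpoints), ordered lexicographically, then terminates at $\lambda_0$.
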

If we draw some diagrams, we can see that there are some relations between these generators. The following definition defines the Temperley-Lieb Algebra, the diagram algebra in terms of this basis instead of the diagrams. 
\begin{defn}[Temperley-Lieb Algebra]
The \emph{Temperley-Lieb Algebra}, denoted by $\TL_n(\beta)$ or just $\TL_n$, is the associative unital algebra over $\CC$ with words generated by the letters $e_1$, $e_2$, $\dots$, $e_{n-1}$ with relations 
\[
e_i^2 = \beta \cdot e_i, \; e_ie_{i \pm 1}e_i = e_i, \;\text{and }e_ie_j = e_je_i \text{ for } |i-j| \geq 2. 
\]
\end{defn}
\begin{thm}[\cite{degroot}]
There is an isomorphism of algebras between $\CC \ell_n(\beta)$ and $\TL_n(\beta)$ given by identifying $e_i$ with the corresponding diagram. 
\end{thm}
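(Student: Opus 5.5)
We will construct a homomorphism $\Phi\colon \TL_n(\beta)\to \CC\ell_n(\beta)$ sending each generator $e_i$ to the corresponding diagram, and show that it is bijective.

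\textbf{Well-definedness.} It suffices to check that the diagrams satisfy the three defining relations.
\begin{itemize}
\item Concatenating $e_i$ with itself closes one loop, between the two cups at positions $i,i+1$. This gives $e_i^2=\beta e_i$.
\item For $e_ie_{i+1}e_i$, a homotopy straightens the resulting zigzag. The result is a single link that returns to the pattern of $e_i$, with no closed loops, so $e_ie_{i+1}e_i=e_i$. The case $e_ie_{i-1}e_i$ is symmetric.
\item For $|i-j|\ge2$, the two diagrams act on disjoint strands, so homotopy gives $e_ie_j=e_je_i$.
\end{itemize}
Hence $\Phi$ extends to an algebra map, sending $1$ to the identity diagram. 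By the preceding lemma of \cite{degroot}, the diagrams $e_1,\dots,e_{n-1}$ generate $\CC\ell_n(\beta)$. Every diagram is therefore a product of them; as a concatenation it carries a factor $\beta^m$, and since $q$ is not a root of unity, $\beta\neq0$, so the diagram itself lies in the image. Thus $\Phi$ is surjective.

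\textbf{Injectivity by dimension count.} Note that $\dim \CC\ell_n(\beta)=|\ell_n|$. Cutting a diagram along a horizontal line and unfolding the top line onto the bottom gives a bijection from $\ell_n$ to noncrossing perfect matchings on $2n$ points. Hence $|\ell_n|=C_n=\frac{1}{n+1}\binom{2n}{n}$. It remains to show that $\TL_n$ is spanned by at most $C_n$ words. We do this with a normal form (Jones' normal form). Using the relations, every nonzero word reduces to a scalar multiple of a product
\[
(e_{a_1}e_{a_1-1}\cdots e_{b_1})(e_{a_2}e_{a_2-1}\cdots e_{b_2})\cdots(e_{a_k}\cdots e_{b_k})
\]
with $a_1<a_2<\dots<a_k$ and $b_1<\dots<b_k$, where $b_j\le a_j$. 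The reduction proceeds by induction on word length, in the style of the standard argument for Coxeter-type presentations:
\begin{enumerate}
\item Take the occurrence of the largest index $e_{n-1}$, or more generally a maximal index.
\item If it occurs twice, commutations together with either $e_i^2=\beta e_i$ or $e_ie_{i\pm1}e_i=e_i$ shorten the word. This shows that a reduced word contains the maximal index at most once.
\item Induct on $n$, since the part of the word on either side of that occurrence lies in $\TL_{n-1}$.
\end{enumerate}
Sequences of this form are counted by a ballot-type bijection, giving $C_n$ of them. Then $\dim\TL_n\le C_n=\dim\CC\ell_n(\beta)$, and together with surjectivity $\Phi$ is an isomorphism.

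\textbf{Main obstacle.} The hardest step is the normal form reduction and its count. An alternative that avoids explicit counting is to establish $\dim\TL_n\le C_n$ via the recursion $C_n=\sum_{k} C_kC_{n-1-k}$. One writes $\TL_n=\TL_{n-1}+\TL_{n-1}e_{n-1}\TL_{n-1}$ and uses $e_{n-1}\TL_{n-1}e_{n-1}\subseteq \TL_{n-2}e_{n-1}$ to bound the second summand. We would try the explicit Jones normal form first, since checking that it matches the diagram count is a straightforward bijection once the reduction lemma is proved.
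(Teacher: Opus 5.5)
The paper does not prove this statement; it quotes it from \cite{degroot}. Your overall strategy is the standard one, and it is sound as far as it goes. The diagrams satisfy the three relations, so $\Phi\colon\TL_n\to\CC\ell_n(\beta)$ exists. Surjectivity follows from the generation lemma, and your remark that $\beta\neq0$ handles the loop factors. Injectivity then reduces to showing $\dim\TL_n\le C_n=|\ell_n|$. The Jones normal form you state is the right one, and pairs of strictly increasing sequences $a_1<\dots<a_k$, $b_1<\dots<b_k$ with $b_j\le a_j$ are indeed counted by $C_n$. The gap is that your reduction steps (1)--(3) do not prove this normal form. In particular they never produce the condition $b_1<\dots<b_k$, and that condition is what makes the count equal $C_n$.

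Steps (1)--(2) show only that $e_{n-1}$ occurs at most once, i.e.\ $\TL_n=\TL_{n-1}+\TL_{n-1}e_{n-1}\TL_{n-1}$. Step (3) applies induction to the two sides separately, which gives a spanning set of up to $C_{n-1}+C_{n-1}^2$ words. You can do better by re-running the argument on the right factor and pushing every letter that commutes with $e_{n-1}$ to the left. That yields $\TL_n=\TL_{n-1}\cdot\{1,e_{n-1},e_{n-1}e_{n-2},\dots,e_{n-1}e_{n-2}\cdots e_1\}$, but still only $\dim\TL_n\le nC_{n-1}$. This already exceeds $C_n$ when $n=3$ ($6>5$): the list includes $e_1\cdot e_2e_1$, which equals $e_1$. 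Nothing in your sketch rules such words out.

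The missing step is as follows. Write a reduced word as $x\,e_{n-1}e_{n-2}\cdots e_b$ with $x\in\TL_{n-1}$ in normal form. Suppose the last block of $x$ ends in $e_c$ with $c\ge b$. Then $e_c$ commutes past $e_{n-1},\dots,e_{c+2}$ and meets $e_{c+1}e_c$, and the relation $e_ce_{c+1}e_c=e_c$ shortens the word. Induction on length therefore forces $b_{k-1}<b_k$.

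Step (2) also silently needs the induction hypothesis applied to the segment between two consecutive occurrences of $e_{n-1}$. Only then does $e_{n-2}$ occur there at most once, so that one commutation plus one relation finishes the reduction.

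Your fallback via $C_n=\sum_kC_kC_{n-1-k}$ has the same hole. The inclusion $e_{n-1}\TL_{n-1}e_{n-1}\subseteq\TL_{n-2}e_{n-1}$ is just the at-most-once statement again. It does not by itself bound $\dim\TL_{n-1}e_{n-1}\TL_{n-1}$ by $C_n-C_{n-1}$.
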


For the rest of this paper, we use $\TL_n$ interchangeably, referring to both the diagram algebra and the Temperley-Lieb algebra, and use $e_i$ to refer to both its corresponding diagram and the letter in the Temperley-Lieb algebra. As usual, call a word \emph{reduced} if it is of shortest possible length. 
%
%
\section{Combinatorical Bijections of the Temperley-Lieb Algebra} \label{tlcombo}
Diagrams are also related to other combinatorial structures. The goal of this section is to assert a bijection between the Tempeley-Lieb algebra and other combinatorial objects, laying the groundwork for Section~\ref{thm1.8}. All of the content in this section is from either \cite{degroot} or \cite{westbury}. 
We first define the parenthesis diagram. 
\begin{defn}
A $(n, p)$-\emph{parenthesis diagram} is an involution $\sigma \in S_n$ such that 
\begin{enumerate}
\item[(i)] there do not exist $i$, $j$, and $k$ with $i < j < k$, $\sigma(j) = j$, and $\sigma(i) = k$, 
\item[(ii)] there do not exist $i$, $j$, $k$, and $l$ with $i < j < k < l$, $\sigma(i) = k$, and $\sigma(j) = l$, and
\item[(iii)] there are $2p$ elements $i$ with $\sigma(i) \neq i$ and $n-2p$ elements $j$ such that $\sigma(j) = j$. 
\end{enumerate}
\end{defn}

Now, we define a link state. 
\begin{defn}
An $(n, p)$-\emph{link state} is half of a $n$-diagram, with exactly $p$ quasi-simple links. 
\end{defn}

In a link state, the links that are cut off are called \emph{defects}. If we just consider the quasi-simple links and ignore the defects, we obtain $(n, p)$-\emph{cup diagrams}. An example is shown in Figure~\ref{fig:cupdiagram}. 

\begin{figure}[!ht]
\centering
\tikzset{every picture/.style={line width=0.75pt}} 

\begin{tikzpicture}[scale = .25]
\node at (1,9){$\bullet$};
    \node at (5,9){$\bullet$};
    \node at (9,9){$\bullet$};
    \node at (13,9){$\bullet$};
\draw (0,9) -- (14,9);
\draw (5,9) arc (180:360:2);
\draw (1, 4.5) -- (1, 9);
\draw (13, 4.5) -- (13, 9);
\end{tikzpicture}
\hspace{10mm}
\begin{tikzpicture}[scale = .25]
\draw (1,9) circle[radius=4pt];
\fill (1,9) circle[radius=4pt];

\draw (5,9) circle[radius=4pt];
\fill (5,9) circle[radius=4pt];
\draw (5,9) arc (180:360:2);
\draw (9,9) circle[radius=4pt];
\fill (9,9) circle[radius=4pt];
\draw (13,9) circle[radius=4pt];
\fill (13,9) circle[radius=4pt];

\node at (1,9){$\bullet$};
    \node at (5,9){$\bullet$};
    \node at (9,9){$\bullet$};
    \node at (13,9){$\bullet$};
\end{tikzpicture}
\caption{An example of a $(4, 1)$-link state and its corresponding cup diagram. }
\label{fig:cupdiagram}
\end{figure}
\begin{rem}
Note that the Temperley-Lieb algebra acts naturally on the vector space generated by all $(n, p)$-link states. This will be useful later in this paper. 
\end{rem}
\begin{lem} \label{cupparenthesis}
There is a bijection between $(n, p)$-parenthesis diagrams, $(n, p)$-link states, and $(n, p)$-cup diagrams. 
\end{lem}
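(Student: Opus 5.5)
I will construct two explicit maps, one from link states to cup diagrams and one from cup diagrams to parenthesis diagrams, and show that each is invertible. Throughout, the $n$ vertices are labelled $1, \dots, n$ from left to right.

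\emph{Link states and cup diagrams.} An $(n,p)$-link state is half of an $n$-diagram. Each of its links is either a quasi-simple link joining two of the $n$ vertices, or a defect running from a vertex to the cut line. The forgetful map deletes the defects and keeps the $p$ quasi-simple links, so it lands in the $(n,p)$-cup diagrams.

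For the inverse, take an $(n,p)$-cup diagram and attach a vertical defect to each of its $n-2p$ unmatched vertices. This is only possible without crossings if no unmatched vertex lies under a cup. Indeed, a vertical segment from a vertex $j$ with $i<j<k$ and $i$ cupped to $k$ must cross that cup. So I will read the definition of a cup diagram as requiring that no free vertex lies under a cup. This condition is automatically inherited from link states, since a defect under a cup would force a crossing.

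With this condition, reattaching defects gives a planar half-diagram. Up to homotopy, this half-diagram is the unique link state with those cups. The two maps are therefore mutually inverse.

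\emph{Cup diagrams and parenthesis diagrams.} Send a cup diagram to the involution $\sigma$ that swaps the two endpoints of each cup and fixes every free vertex. Exactly $2p$ points are moved and $n-2p$ are fixed, which is condition (iii).

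Condition (ii) says that no two cups interleave as $i<j<k<l$ with $i$ matched to $k$ and $j$ matched to $l$. This is the statement that the cups drawn in the lower half-plane do not cross. Here I use the standard fact that two semicircular arcs on a line intersect exactly when their endpoints interleave.

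Condition (i) says that no fixed point lies under a cup, which is precisely the defect condition from the previous paragraph.

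Conversely, given an involution satisfying (i)--(iii), draw a semicircle under the line joining $i$ and $\sigma(i)$ for each $2$-cycle. By the interleaving criterion and (ii), these arcs are disjoint, so they form a valid $(n,p)$-cup diagram. By (i), it satisfies the defect condition. These two constructions are clearly inverse to each other, because each is determined by the same matching of endpoints.

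\emph{Main obstacle.} The key step is making the topological facts rigorous, since diagrams are only defined up to homotopy. One needs that non-crossing arcs in a half-plane are determined up to isotopy by their endpoint pairing, and that such a pairing is realizable without crossings if and only if it is non-interleaving. I would handle both by induction on $p$. A non-interleaving matching always contains a simple link $\{i,i+1\}$ with no vertices between its endpoints. Removing that link and applying the inductive hypothesis to the remaining $n-2$ vertices reduces the claim to the smaller case.
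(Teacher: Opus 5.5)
Your argument is correct in substance. The paper gives no proof of this lemma: it is quoted, with the rest of Section~3, from \cite{degroot} and \cite{westbury}. So you are supplying an argument the paper omits rather than re-deriving one. Your route is the standard one. It also makes explicit something the paper leaves implicit: cup diagrams, being link states with the defects erased, inherit the condition that no free vertex lies under a cup, and this is exactly where condition (i) comes from.

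The topological step does not quite do what you claim.
\begin{itemize}
\item For the passage from a drawn cup diagram to an involution satisfying (i) and (ii), the cups are arbitrary arcs, so the semicircle criterion does not apply directly. The induction in your last paragraph cannot repair this. It starts from a matching already known to be non-interleaving, so it only re-proves realizability. It does not show that disjoint arcs must have non-interleaving endpoints.
\item What you need is Jordan-curve separation. A cup $\gamma$ from $i$ to $k$, together with the segment $[i,k]$, bounds a disc. Hence any arc leaving a point of $(i,k)$ and ending either at a vertex outside $[i,k]$ or on the cut line must meet $\gamma$. This single fact gives both (i) and (ii).
\end{itemize}

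Two smaller points:
\begin{itemize}
\item Your claim that a non-interleaving matching contains a simple link $\{i,i+1\}$ also needs (i). For example, $\sigma=(1\,3)\in S_3$ is non-interleaving but has no simple link.
\item Attaching \emph{vertical} defects is only safe after replacing the cups by semicircles, since an arbitrary cup from $1$ to $2$ may pass below vertex $3$.
\end{itemize}
Both are harmless, because you construct the semicircle representative from the matching anyway.
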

%
%
%
\section{The Hecke Algebra and Kazhdan-Lusztig Basis}\label{heckesection}
\subsection{The Hecke Algebra}
The Hecke algebra, as described in \cite{losev}, is an important structure that is related to the Temperley-Lieb algebra and used throughout our paper. Recall that $q \in \CC$ is not a root of unity. 

Let $W$ be a Coxeter group with set $S$ of simple reflections. We have the Bruhat order $\prec$ on $W$, with $y \preceq x$ if and only if some substring of some reduced form of $x$ is a reduced form of $y$. Let $\ell$ denote the function returning the reduced length of a word. Note that for $s \in S$ and $w \in W$, $\ell(sw) = \ell(w) - 1$ if $w$ has a reduced expression that starts with $s$, and $\ell(sw) = \ell(w) + 1$ otherwise. Now, we can define the Hecke algebra. 

\begin{defn}[Hecke Algebra]
The Hecke algebra $\mathcal H = \mathcal H(W, S)$ is the free $\ZZ[q, q^{-1}]$-module with basis $H_w$, $w \in W$ and an associative product determined by 
\[ H_xH_y = H_{xy} \text{ if } \ell(xy) = \ell(x) + \ell(y)\]
\[ H_xH_s = H_{xs} + (q^{-1}-q)H_x \text{ if } \ell(xs) = \ell(x) -1. \]
\end{defn}
This basis $H_x$ is called the \emph{standard basis}. We can see that $H_1$ is a unit since $\ell(1) = 0$ and that the second relation can also be written as 
\[ H_sH_x = H_{sx} + (q^{-1}-q)H_x \text{ if } \ell(sx) = \ell(x) - 1. \]

\begin{rem}
Alternatively, the Hecke algebra can be defined by the generators $H_s$ where $s \in S$ and the following relations. The first relation 
\[ (H_s-q^{-1})(H_s + q) = 0\]
is called the quadratic relation and the other relations 
\[ H_sH_tH_s\cdots  = H_tH_sH_t\cdots  \text{ for } s \neq t \in S,\]
where the number of factors on both sides are equal to the order of $st$, are called the braid relations. 
\end{rem}
\subsection{The Kazhdan-Lusztig Basis of the Hecke Algebra}
The Hecke algebra has a ``canonical'' basis called the Kazhdan-Lusztig basis, defined using the bar involution and an upper triangularity property, that controls much of the representation theory of $\quantum$. 
\begin{lem}[Bar Involution \cite{kl}]
The map $a \mapsto \bar a : \mathcal H \rightarrow \mathcal H$ such that $\bar q = q^{-1}$ and $\bar H_x = (H_{x^{-1}})^{-1}$ is a ring involution. 
\end{lem}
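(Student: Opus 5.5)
The plan is to exploit the generator-and-relations presentation in the Remark: $\mathcal H$ is generated over $\ZZ[q,q^{-1}]$ by the $H_s$, $s\in S$, subject only to the quadratic relation and the braid relations. First I check that each $H_s$ is invertible. From $(H_s-q^{-1})(H_s+q)=0$ we get $H_s^2=(q^{-1}-q)H_s+1$, hence
\[
H_s^{-1}=H_s+(q-q^{-1}).
\]
For general $x$ with reduced expression $x=s_1\cdots s_k$ we have $H_x=H_{s_1}\cdots H_{s_k}$, so $H_x$ is invertible as well. Since $x^{-1}=s_k\cdots s_1$ is reduced, this gives
\[
(H_{x^{-1}})^{-1}=(H_{s_k}\cdots H_{s_1})^{-1}=H_{s_1}^{-1}\cdots H_{s_k}^{-1}.
\]
Hence the proposed map, if it is a ring homomorphism, sends $H_s\mapsto H_s^{-1}$ and is determined by this together with $q\mapsto q^{-1}$. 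Conversely, I define $\phi$ as the unique ring homomorphism from the free algebra on the $H_s$ over $\ZZ[q,q^{-1}]$ which is semilinear with respect to $q\mapsto q^{-1}$ and sends $H_s\mapsto H_s^{-1}=H_s+q-q^{-1}$. The formula above then shows that $\phi(H_x)=(H_{x^{-1}})^{-1}$ for every $x$.

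The main step is checking that $\phi$ respects the defining relations. For the quadratic relation we must show $(H_s^{-1}-q)(H_s^{-1}+q^{-1})=0$. Substituting $H_s^{-1}=H_s+q-q^{-1}$, the product becomes $(H_s-q^{-1})(H_s+q)$, which is $0$. For the braid relations, apply inversion to $H_sH_tH_s\cdots=H_tH_sH_t\cdots$ (both sides with $m_{st}$ factors). This gives $\cdots H_s^{-1}H_t^{-1}H_s^{-1}=\cdots H_t^{-1}H_s^{-1}H_t^{-1}$. Because the words have the same length and alternate, reversing them yields exactly the braid relation for the $H_s^{-1}$. So $\phi$ descends to a ring endomorphism of $\mathcal H$.

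Finally I check that $\phi$ is an involution. We have $\phi(\phi(q))=q$, and
\[
\phi(\phi(H_s))=\phi(H_s+q-q^{-1})=H_s^{-1}+q^{-1}-q=H_s.
\]
Since $\phi\circ\phi$ is a ring homomorphism that fixes the generators and the scalars, it is the identity. In particular $\phi$ is bijective, so it is a ring involution.

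The only delicate point I expect is justifying the presentation itself, namely that the $H_s$ generate with only the quadratic and braid relations. This is stated in the Remark and can be assumed, via Matsumoto's theorem. Everything else is the short computations above.
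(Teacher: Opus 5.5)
Your proof is correct. The paper gives no argument of its own for this lemma: it simply cites Kazhdan--Lusztig and records $H_s^{-1}=H_s+q-q^{-1}$. Your verification is the standard argument behind that citation: you check that $q\mapsto q^{-1}$, $H_s\mapsto H_s^{-1}$ respects the quadratic and braid relations, deduce $\phi(H_x)=(H_{x^{-1}})^{-1}$ via reduced expressions, and check $\phi\circ\phi=\id$ on generators.
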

From $H_s^2 = (q^{-1}-q)H_s + 1$, we obtain $H_s^{-1} = H_s + q - q^{-1}$. We call $a$ a \emph{self-dual} element if $\bar a = a$. 
\begin{lem}[\cite{kl}]\label{heckeselfdual}
For all $x \in W$, there exists a unique self-dual element $B_x \in \mathcal H$ such that $B_x \in H_x + \sum_y q^{-1}\ZZ[q^{-1}] H_y$. Furthermore, we have $B_x \in H_x + \sum_{y\prec x} q^{-1}\ZZ[q^{-1}] H_y$. 
\end{lem}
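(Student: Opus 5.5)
We prove uniqueness and existence separately. Throughout, write $\bar{\ }$ for the bar involution of the preceding lemma. We use the conventions of the Hecke algebra definition above, so $H_s^{-1}=H_s+q-q^{-1}$ and hence $\bar H_s = H_s + q - q^{-1}$.

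\emph{Uniqueness.} Suppose $B_x$ and $B'_x$ both satisfy the conditions. Then $D=B_x-B'_x$ is self-dual and lies in $\sum_y q^{-1}\ZZ[q^{-1}]H_y$.

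First we establish a triangularity property of the bar involution:
\[
\bar H_y \in H_y + \sum_{z\prec y}\ZZ[q,q^{-1}]H_z.
\]
To see this, take a reduced expression $y=s_1\cdots s_k$. Then $\bar H_y=\bar H_{s_1}\cdots\bar H_{s_k}$, and we expand the product using $\bar H_s=H_s+(q-q^{-1})$. Every term other than the leading one is a product of $H_s$ over a proper subword of $s_1\cdots s_k$. Such a product is a combination of $H_z$ with $z$ a subexpression, and hence $z\prec y$ by the subword characterization of Bruhat order.

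Now suppose $D\neq 0$ and choose $y$ maximal in Bruhat order among those $H_y$ appearing in $D$. Its coefficient $p(q)\in q^{-1}\ZZ[q^{-1}]$ is nonzero. Comparing coefficients of $H_y$ in $\bar D=D$ gives $\overline{p}=p$. This is impossible, because $\overline{p}\in q\ZZ[q]$ and $q\ZZ[q]\cap q^{-1}\ZZ[q^{-1}]=0$. Hence $D=0$.

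\emph{Existence.} We induct on $\ell(x)$ and prove the stronger statement including the support condition $y\prec x$. For $x=1$, take $B_1=H_1$.

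For $s\in S$, set $B_s=H_s+q$. This is self-dual, since $\bar H_s+q^{-1}=H_s+q$.

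For the inductive step, given $x$, choose $s$ with $\ell(xs)<\ell(x)$ and write $w=xs$. The product $B_wB_s$ is self-dual. We compute it using the multiplication rules: for $y\prec w$,
\[
H_yB_s=\begin{cases} H_{ys}+qH_y, & ys\succ y,\\ H_{ys}+q^{-1}H_y, & ys\prec y.\end{cases}
\]
It follows that $B_wB_s\in H_x+\sum_{z\prec x}\ZZ[q^{-1}]H_z$, possibly with constant terms. The only terms that can fail the $q^{-1}\ZZ[q^{-1}]$ condition are those of the form $q\cdot q^{-1}\mu$, coming from $ys\succ y$. These contribute integer constants $\mu(y,w)$ in front of $H_y$ with $ys\prec y$.

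To correct for these, we subtract $\sum_{z\prec x,\, zs\prec z}\mu(z,w)B_z$, using the already-constructed $B_z$ with $\ell(z)<\ell(x)$. Each such $B_z$ is self-dual, and its expansion is $H_z$ plus lower terms with coefficients in $q^{-1}\ZZ[q^{-1}]$. We carry out this subtraction in decreasing Bruhat order of $z$, so that the constant terms are removed successively without creating new ones above them. The result is self-dual, lies in $H_x+\sum_{y\prec x}q^{-1}\ZZ[q^{-1}]H_y$, and is therefore the desired $B_x$.

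\emph{Main obstacle.} The key difficulty is the bookkeeping in the existence step. We must check that $B_wB_s$ has all coefficients in $\ZZ[q^{-1}]$ apart from the leading $H_x$, and that the only obstructions are constants. This requires tracking both cases $ys\succ y$ and $ys\prec y$, and verifying via the lifting property of Bruhat order that every index arising satisfies $ys\preceq x$. Ordering the correction subtractions by Bruhat order then makes the induction close.
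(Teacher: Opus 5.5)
Your uniqueness argument is correct, and your existence strategy (induction via $B_wB_s$ followed by $\mu$-corrections) is the standard Kazhdan--Lusztig argument; the paper itself gives no proof here and only cites \cite{kl}. However, the existence step fails as written because you used the wrong normalization for $B_s$. In the paper's conventions $H_s+q$ is indeed self-dual, but its $H_1$-coefficient is $q\notin q^{-1}\ZZ[q^{-1}]$. It is the $B_s$ of the opposite, $q\ZZ[q]$, normalization (Soergel's $H_s+v$).

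The mistake propagates. You get $H_wB_s=H_x+qH_w$, and no other term of $B_wB_s$ involves $H_w$: that would need $ys=w$, i.e.\ $y=x\not\prec w$. So the coefficient of $H_w$ is exactly $q$, and your claim $B_wB_s\in H_x+\sum_{z\prec x}\ZZ[q^{-1}]H_z$ is false. Subtracting integer multiples of lower $B_z$ can never remove a positive power of $q$. Already $(H_s+q)(H_t+q)=H_{st}+qH_s+qH_t+q^2$. Your description of the obstruction is also internally inconsistent: you say the constants come ``from $ys\succ y$'' but sit at $H_y$ with $ys\prec y$, and you then correct only over $zs\prec z$.

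The fix is to take $B_s=H_s-q^{-1}$. It is self-dual, since $\bar H_s-q=H_s+q-q^{-1}-q=H_s-q^{-1}$. It is also the element that $\phi_q$ of Lemma~\ref{factors} sends to $e_i$, as Proposition~\ref{fg} requires. With this choice, $H_yB_s=H_{ys}-q^{-1}H_y$ if $ys\succ y$, and $H_yB_s=H_{ys}-qH_y$ if $ys\prec y$. Hence every coefficient of $B_wB_s$ other than that of $H_x$ lies in $\ZZ[q^{-1}]$. The only constant terms are $-\mu(y,w)$ at $H_y$ with $y\prec w$ and $ys\prec y$, where $\mu(y,w)$ is the $q^{-1}$-coefficient of $p_{y,w}$. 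Therefore $B_x=B_wB_s+\sum_{z\prec w,\ zs\prec z}\mu(z,w)B_z$.

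No ordering of these corrections is needed, since $\mu(z,w)B_z$ contributes a constant only at $H_z$ and elements of $q^{-1}\ZZ[q^{-1}]$ elsewhere. The support bound also needs no lifting property. Appending $s$ to a reduced word of $w$ gives a reduced word of $x$, so $ys\preceq x$ whenever $y\preceq w$ and $ys\succ y$; the case $ys\prec y$ is trivial.
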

\begin{rem}
The first part of this lemma is important for its proof. The second part gives upper triangularity, which is a central part of the structure of the Kazhdan-Lusztig basis and the proof of the following theorem. 
\end{rem}
\begin{rem}\label{altrem}
Alternatively, we can take the ring involution fixing all $H_i$ and sending $q$ to $-q^{-1}$, so we can also define the Kazhdan-Lusztig basis as $B_x \in H_x + \sum_{y\prec x} q\ZZ[q] H_y$. In this paper, we use $q^{-1}\ZZ[q^{-1}]$ instead for the later compatibility with the Temperley-Lieb algebra. 
\end{rem}
\begin{thm}[Kazhdan-Lusztig Basis \cite{kl}] \label{klbasis}
The $B_x$ as defined above form a basis of $\mathcal H$. 
\end{thm}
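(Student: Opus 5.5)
The plan is to deduce Theorem~\ref{klbasis} from the unitriangularity in Lemma~\ref{heckeselfdual}. Since $\mathcal H$ is a free $\ZZ[q,q^{-1}]$-module with basis $\{H_x\}_{x\in W}$, we expand each $B_x$ in the standard basis:
\[
B_x = H_x + \sum_{y \prec x} p_{y,x} H_y, \qquad p_{y,x} \in q^{-1}\ZZ[q^{-1}].
\]
The transition matrix $P=(p_{y,x})$, with $p_{x,x}=1$ and $p_{y,x}=0$ unless $y \preceq x$, is then unitriangular with respect to the Bruhat order. The task is to show that such a matrix is invertible over $\ZZ[q,q^{-1}]$, so that the $B_x$ span $\mathcal H$ and are linearly independent.

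\textbf{Spanning.} We show by induction on the Bruhat order that each $H_x$ is a $\ZZ[q,q^{-1}]$-combination of the $B_y$ with $y \preceq x$. The set $\{y : y \preceq x\}$ is finite, since every such $y$ is a subword of a fixed reduced word for $x$; hence the Bruhat order restricted to it is well-founded, and induction is legitimate even when $W$ is infinite. The base case is $x = 1$: there $B_1 = H_1$, because $H_1$ is self-dual and the uniqueness part of Lemma~\ref{heckeselfdual} applies. For the inductive step, write
\[
H_x = B_x - \sum_{y \prec x} p_{y,x} H_y .
\]
By the induction hypothesis each $H_y$ with $y \prec x$ lies in the span of $\{B_z : z \preceq y\}$. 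By transitivity of $\preceq$, $H_x$ therefore lies in the span of $\{B_z : z \preceq x\}$. This step needs transitivity of the Bruhat order, which follows from the subword characterization, together with the fact that $\prec$ is a strict order; the latter holds because $y \prec x$ forces $\ell(y) < \ell(x)$.

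\textbf{Linear independence.} Suppose $\sum_{x \in F} c_x B_x = 0$ for a finite set $F$ with not all $c_x$ equal to zero. Choose $x_0 \in F$ with $c_{x_0} \neq 0$ and $\ell(x_0)$ maximal among such elements. Now compare coefficients of $H_{x_0}$. Every other $B_x$ with $c_x \neq 0$ involves $H_{x_0}$ only if $x_0 \prec x$, which would give $\ell(x) > \ell(x_0)$ and contradict maximality. So the coefficient of $H_{x_0}$ is exactly $c_{x_0}$. Freeness of the standard basis then forces $c_{x_0} = 0$, a contradiction.

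The only real obstacle is ensuring that the triangularity argument makes sense when $W$ is infinite, since the matrix cannot simply be inverted as a finite matrix. Using the length function as the grading, with the strict increase of $\ell$ along $\prec$ and the finiteness of Bruhat intervals below a fixed element, handles this. Everything else is formal once Lemma~\ref{heckeselfdual} is granted.
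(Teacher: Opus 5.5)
Your proposal is correct and follows the route the paper has in mind: the paper gives no proof beyond citing Kazhdan--Lusztig, but its remark after Lemma~\ref{heckeselfdual} names the unitriangularity $B_x \in H_x + \sum_{y\prec x} q^{-1}\ZZ[q^{-1}]H_y$ as the key input, and your spanning and independence arguments are the standard deduction from it. As a minor simplification, strong induction on $\ell(x)$ alone suffices for spanning, since each $y \prec x$ has $\ell(y)<\ell(x)$; your appeals to transitivity of $\preceq$ and to finiteness of Bruhat intervals are therefore harmless but not needed.
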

Throughout this paper, the Kazhdan-Lusztig basis will also be called the canonical basis of the Hecke algebra. 

\begin{defn}[Kazhdan-Lusztig Polynomial] \label{klpoly}
For $x, y \in W$, the Kazhdan-Lusztig polynomial $p_{y, x} \in \ZZ[q^{-1}]$ is such that $B_x = \sum_y p_{y, x}H_y$. 
\end{defn}
Note that $p_{y, x} = 0$ for $y \npreceq x$, $p_{x, x} = 1$, and $p_{y, x} \in q^{-1}\ZZ[q^{-1}]$ for $y \prec x$. Furthermore, note that it is possible to explicitly compute the Kazhdan-Lusztig basis. 

This proposition demonstrates that the basis of the Temperley-Lieb algebra can be thought of as an analog to the Kazhdan-Lusztig basis of the Hecke Algebra. 

\section{Action of the Temperley-Lieb Algebra}\label{tlaction}
In this section, we discuss the spin representation $(\CC^2)^{\otimes n}$, an important object in many areas of mathematics and physics, and how the Temperley-Lieb algebra acts on it. To define the spin representation, we first define the Temperley-Lieb category. 

\subsection{The Temperley-Lieb Category} The Temperley-Lieb category gives a novel perspective of the Temperley-Lieb algebra, providing an easier way to visualize the spin representation. 

\begin{defn}
    A \emph{$(m, n)$-diagram} comprises two parallel lines with $m$ vertices on the bottom line and $n$ vertices on the top line such that the vertices are connected by edges satisfying the following properties: 
    \begin{itemize}
        \item the edges are between the parallel lines, 
        \item the edges do not intersect each other, and
        \item each vertex is the endpoint of exactly one edge. 
    \end{itemize}
\end{defn}

Given a $(\ell, m)$-diagram and a $(m, n)$-diagram, we can obtain a $(\ell, n)$-diagram by concatenation. Similarly to the case of the Temperley-Lieb algebra, in concatenation, a contractible loop may appear; we remove it and multiply the resulting diagram by formal variable $\beta = -q-q^{-1}$. An example of concatenation is shown in Figure~\ref{fig:comp}. We define a $(a, b)$-diagram composed with a $(c, d)$-diagram to be equal to $0$ if $b \neq c$. 

    \begin{figure}[!ht]\label{fig:comp}
    \centering
    \vspace{5mm}
    \begin{tikzpicture}[scale=.25]
    \draw (0,0) -- (10,0);
    \draw (0,9) -- (18,9);
    \node at (1,9){$\bullet$};
    \node at (5,9){$\bullet$};
    \node at (17,9){$\bullet$};
    \node at (1,0){$\bullet$};
    \node at (5,0){$\bullet$};
    \node at (9,0){$\bullet$};
    \draw[red] (17,9) arc (360:180:2);
    \draw (9,9) -- (9,0);    
    \draw (0,9) -- (18,9);
    \draw (5,0) arc (0:180:2);
    \draw (1,9) arc (180:360:2);
    
\draw (0,18) -- (18,18);
\node at (1,18){$\bullet$};
\node at (5,18){$\bullet$};
\node at (9,18){$\bullet$};
\node at (13,18){$\bullet$};
\node at (17,18){$\bullet$};
\node at (1,9){$\bullet$};
\node at (5,9){$\bullet$};
\node at (9,9){$\bullet$};
\node at (13,9){$\bullet$};
\node at (17,9){$\bullet$};
\draw (5,9) arc (180:0:2);
\draw[red] (13,9) arc (180:0:2);
\draw (13,18) arc (360:180:2);
\draw (5,18) arc (360:180:2);
\draw (1,9) .. controls (1,17) and (17,10) .. (17,18); 
    \node at (20, 9){$\rightarrow$};
    
    \node[red] at (23, 13.5){$\beta \cdot$};
\draw (24,9) -- (34,9);
\draw (24,18) -- (42,18);
\node at (25,18){$\bullet$};
\node at (29,18){$\bullet$};
\node at (33,18){$\bullet$};
\node at (37,18){$\bullet$};
\node at (41,18){$\bullet$};
\node at (25,9){$\bullet$};
\node at (29,9){$\bullet$};
\node at (33,9){$\bullet$};
\draw (37,18) arc (360:180:2);
\draw (29,18) arc (360:180:2);
\draw (33,9) .. controls (33,17) and (41,10) .. (41,18); 
\draw (29,9) arc (0:180:2);
    
    \end{tikzpicture}
    \caption{The composition of a $(3, 5)$-diagram and a $(5, 5)$-diagram. }
    \label{fig:comp}
\end{figure}

\begin{defn}[Temperley-Lieb Category]
The \emph{Temperley-Lieb category}, denoted $\TL$, has objects $\{[n], n \in \ZZ_{\geq 0}\}$, where $[n]$ is a set of $n$ vertices. The morphisms $\hom([m], [n])$ is the vector space with a basis of diagrams consisting of $(m, n)$-diagrams. 
\end{defn}

Define $\eps_i^n \in \hom([n], [n-2])$ to be the diagram connecting the $i$th and $(i+1)$th vertices on the bottom line and containing no other quasi-simple links. Similarly, define $\delta_i^n \in \hom([n-2], [n])$ to be the diagram connecting the $i$th and $(i+1)$th vertices on the top line with no other quasi-simple links. An example of these diagrams are shown in Figure~\ref{fig:gen}. 
\begin{figure}[!ht]
\centering
\tikzset{every picture/.style={line width=0.75pt}} 
\begin{tikzpicture}[scale=.25]
    \node at (1,9){$\bullet$};
    \node at (13,9){$\bullet$};
    \node at (1,0){$\bullet$};
    \node at (5,0){$\bullet$};
    \node at (9,0){$\bullet$};
    \node at (13,0){$\bullet$};
\draw (0,0) -- (14,0);
\draw (0,9) -- (14,9);
\draw (1, 0) --(1, 9);
\draw (13, 0) --(13, 9);
\draw (5,0) arc (180:0:2);
\end{tikzpicture}
\hspace{30mm}
\begin{tikzpicture}[scale=.25]
    \node at (1,9){$\bullet$};
    \node at (5,9){$\bullet$};
    \node at (9,9){$\bullet$};
    \node at (13,9){$\bullet$};
    \node at (1,0){$\bullet$};
    \node at (13,0){$\bullet$};
\draw (0,0) -- (14,0);
\draw (0,9) -- (14,9);
\draw (1, 0) --(1, 9);
\draw (13, 0) --(13, 9);
\draw (5,9) arc (180:360:2);
\end{tikzpicture}
\caption{Diagrams for $\eps_2^4$ and $\delta_2^4$, respectively, in $\hom([4], [2])$. }
\label{fig:gen}
\end{figure}

Every diagram in the Temperley-Lieb category can be decomposed into certain generators, as described by the following theorem. 

\begin{thm}[\cite{cautis}]\label{tlcat}
The algebra $\bigoplus_{m, n \in \ZZ_{\geq 0}} \hom([m], [n])$ is generated as an algebra by all $\eps_i^n$ and $\delta_i^n$, where $1 \leq i \leq n-1$, with the following relations: 
\begin{enumerate}
\item[(i)] $\eps_i^n \cdot \delta_i^n = -q-q^{-1}$, and
\item[(ii)] $(\id \otimes\, \eps_i^n) \cdot (\delta_i^n \otimes \id) = \id = (\eps_i^n \otimes \id) \cdot (\id \otimes\, \delta_i^n).$ 
\end{enumerate}
\end{thm}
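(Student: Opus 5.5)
The plan is to show that the elementary caps $\eps_i^n$ and cups $\delta_i^n$ generate every $(m,n)$-diagram, and then that relations (i) and (ii) suffice by reducing any word to a normal form indexed by diagrams. For generation, fix an $(m,n)$-diagram $D$ with $p$ quasi-simple links on the bottom, $p'$ on the top, and $t$ through-strands, so that $m = t+2p$ and $n = t+2p'$. Planarity lets me isotope $D$ so that all bottom arcs sit below all top arcs, with only vertical through-strands in between. Then $D$ factors as a bottom half in $\hom([m],[t])$, built from caps, followed by a top half in $\hom([t],[n])$, built from cups. Each half is handled by induction on the number of arcs. A cup diagram always has an innermost simple arc joining adjacent vertices $i,i+1$, and removing it exhibits the half as a composite with one $\eps_i$ (respectively one $\delta_i$) and a smaller half. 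This is the same mechanism as in the paper's earlier Lemma that the $e_i$ generate $\TL_n$, since $e_i = \delta_i\eps_i$.

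Next I check that (i) and (ii) hold in the diagram category. Relation (i) says that a cap composed with a cup at the same position closes a loop, which contributes $\beta=-q-q^{-1}$. Relation (ii) is the zigzag (snake) isotopy.

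For completeness, let $A$ be the abstract algebra defined by the generators and relations, with the diagram calculus as its target. I will show that every word in $A$ reduces, using (i), (ii) and the far-commutation that comes from interpreting $\otimes$ placement, to a scalar times a normal-form word. A normal-form word is a product of caps with increasing-index conventions followed by cups, with no cup immediately followed by a cap. The reduction rewrites any pattern ``cup then cap'' as follows:
\begin{itemize}
\item at the same position, it becomes a scalar by (i);
\item at adjacent positions, it becomes the identity by (ii);
\item at distant positions, it becomes ``cap then cup'' by planar interchange (this interchange is implicit in the tensor formalism).
\end{itemize}
Termination follows by induction on the number of cup-before-cap inversions.

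Finally, normal-form words biject with pairs of cup diagrams, that is, with $(m,n)$-diagrams via Lemma~\ref{cupparenthesis}. Once the indexing conventions are fixed, distinct normal forms give distinct diagrams, so the surjection $A\to\bigoplus\hom$ sends a spanning set bijectively onto a basis. Hence it is an isomorphism.

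The main obstacle is this last step. I must make the normal form canonical, so that different orderings of commuting caps are identified, and I must verify that the interchange moves are genuinely consequences of the stated relations together with the monoidal structure rather than extra relations. I would first fix the ordering by always removing the leftmost innermost arc. Then I would check the commuting-square case by hand, treating the $\otimes\id$ notation in (ii) as encoding the monoidal interchange law.
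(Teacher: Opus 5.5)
Your overall plan (get generation by factoring each diagram through its through-strands, then rewrite every word into ``caps first, cups second'' and match normal forms with the diagram basis) is the standard argument. The paper gives no proof of its own and only cites the result, so there is no in-paper route to compare against. The genuine gap is exactly the step you postpone as the main obstacle, and it cannot be closed the way you pose it. Read literally, the theorem presents $\bigoplus_{m,n}\hom([m],[n])$ under composition by the indexed generators $\eps_i^n,\delta_i^n$, with (i) and (ii) as the only relations. Every such relation has the form $\eps_a^n\delta_b^n=c\cdot\id_{[n-2]}$ with $|a-b|\le 1$ and $c\in\{1,\beta\}$. No two of these left-hand sides can overlap, since the shared letter would have to be both a cup and a cap. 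So by Bergman's diamond lemma, the words containing no subword $\eps_a^n\delta_b^n$ with $|a-b|\le 1$ form a basis of the presented category. Consequently $\eps_3^4\delta_1^4\neq\delta_1^2\eps_1^2$ and $\eps_1^2\eps_1^4\neq\eps_1^2\eps_3^4$ there, although each pair is equal as diagrams. In fact the words $\eps_1^2\eps_1^4(\eps_1^6\delta_3^6)^k$ are all independent, so $\hom([4],[0])$ is infinite-dimensional in the presented category. Your third rewriting rule and your commuting square are therefore extra relations, not consequences of (i) and (ii). Checking them ``by hand'' cannot derive them.

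The fix is to commit to the monoidal reading, which the $\otimes\id$ in (ii) already presupposes. Take $\TL$ to be the strict monoidal category generated by one cup $\delta\colon[0]\to[2]$ and one cap $\eps\colon[2]\to[0]$, subject to the circle relation and the two zigzag relations. Set $\eps_i^n=\id^{\otimes(i-1)}\otimes\eps\otimes\id^{\otimes(n-i-1)}$, and similarly for $\delta_i^n$. In this setting the distant cup--cap swap and the cap--cap square are both instances of the interchange law $(f\otimes\id)\circ(\id\otimes g)=f\otimes g=(\id\otimes g)\circ(f\otimes\id)$, which is an axiom rather than something to verify. Interchange also lets you write every morphism as a composite of layers $\id^{\otimes a}\otimes x\otimes\id^{\otimes b}$ with $x\in\{\eps,\delta\}$, so your rewriting applies. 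It terminates because each step either shortens the word or lowers the number of cup-before-cap pairs by one.

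What you must then actually prove is the canonical-form lemma. By induction on the number of arcs, any two cap-words with the same cap diagram are equal: two distinct innermost arcs are disjoint, and removing them in either order gives the same element after a single interchange. The same holds for cups. An $(m,n)$-diagram is determined by its bottom and top cup diagrams, because planarity forces the through-strands. Hence the normal forms map bijectively onto the diagram basis, and your surjection is an isomorphism.
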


The first of these relations is the removal of contractible loops. Looking back at the Temperley-Lieb algebra, it is apparent that we set $\beta = -q-q^{-1}$ since that is the factor we multiply by when obtaining a contractible loop. The second relation (see Figure~\ref{fig:TLsecondrelation}) is the fact that a curved link can be contracted to a straight link. 

\begin{figure}[!ht]
\centering
\tikzset{every picture/.style={line width=0.75pt}} 

\begin{tikzpicture}[scale=.25]
\node at (1,18){$\bullet$};
\node at (1,9){$\bullet$};
    \node at (5,9){$\bullet$};
    \node at (9,9){$\bullet$};
    \node at (9,0){$\bullet$};
\draw (0,0) -- (10,0);
\draw (0,9) -- (10,9);
\draw (9, 0) -- (9, 9);
\draw (5,9) arc (360:180:2);
\draw (0,18)--(10,18);
\draw(1, 9)--(1, 18);
\draw (5, 9) arc(180:0:2);
\node at (15, 9) {=};
\draw (20,4.5) -- (30,4.5);
\draw (20,13.5) -- (30,13.5);
\draw(25, 4.5)--(25, 13.5);
\node at (25,4.5){$\bullet$};
\node at (25,13.5){$\bullet$};
\end{tikzpicture}
\caption{The second relation in Theorem~\ref{tlcat}. }
\label{fig:TLsecondrelation}
\end{figure}

With these relations, we can describe the spin representation diagrammatically. 

\subsection{The Spin Representation}
In this section, we describe the spin representation $(\CC^2)^{\otimes n}$ and how the Temperley-Lieb algebra acts on it. 

Let $v_+$ denote $(1, 0) \in \CC^2$ and $v_-$ denote $(0, 1) \in \CC^2$. The 4-dimensional vector space $\CC^2 \otimes \CC^2$ has basis $\{v_+ \otimes v_+, v_+ \otimes v_-, v_- \otimes v_+, v_- \otimes v_-\}$. 

Let $\epsilon\colon\CC^2\otimes\CC^2\to\CC$ be \[v_+ \otimes v_+ \mapsto 0, v_+ \otimes v_- \mapsto -q, v_- \otimes v_+ \mapsto 1, v_- \otimes v_- \mapsto 0\] and let $\delta: \CC \rightarrow \CC^2\otimes\CC^2$ be \[ 1 \mapsto v_+ \otimes v_- - q^{-1}v_- \otimes v_+.\] 

Let $\epsilon_i^n\colon(\CC^2)^{\otimes n}\to (\CC^2)^{\otimes n-2}$ be $(\id)^{\otimes i-1} \otimes \eps \otimes (\id)^{\otimes n-i-1}$ and $\delta_i^n\colon(\CC^2)^{\otimes n-2}\to (\CC^2)^{\otimes n}$ be $\id^{\otimes i-1} \otimes \,\delta\, \otimes \id^{\otimes n-i-1}$. 

Since $\epsilon_i^n$ and $\delta_i^n$ generate $\bigoplus_{m, n \geq 0}\hom([m],[n])$ and all the relations in Theorem~\ref{tlcat} are satisfied, we have an action of $\bigoplus_{m, n \geq 0}\hom([m],[n])$ on $\bigoplus_{n \geq 0}(\CC^2)^{\otimes n}$.

We can consider the case of $m = n$ to obtain a representation of $\TL_n$. 
\begin{defn}
The \emph{spin representation} is the action of $\TL_n = \hom([n], [n])$ on $(\CC^2)^{\otimes n}$. 
\end{defn}

The following example illustrates how we can view the action of the spin representation through slicing a diagram. 
\begin{ex}
We describe an example of how $\TL_4$ acts on $(\CC^2)^{\otimes 4}$. Given the first diagram in Figure~\ref{fig:slice}, we write it in terms of the generators of $\TL$ by decomposing it as shown in the second and third diagrams. Explicitly, the first diagram decomposes into $\delta_1^4\delta_3^2\eps_1^2\eps_2^4$. 

\begin{figure}[!ht]
\centering
\tikzset{every picture/.style={line width=0.75pt}} 
\begin{tikzpicture}[scale=.25]
    \node at (1,9){$\bullet$};
    \node at (5,9){$\bullet$};
    \node at (9,9){$\bullet$};
    \node at (13,9){$\bullet$};
    \node at (1,0){$\bullet$};
    \node at (5,0){$\bullet$};
    \node at (9,0){$\bullet$};
    \node at (13,0){$\bullet$};
\draw (0,0) -- (14,0);
\draw (0,9) -- (14,9);
\draw (5,0) arc (180:0:2);
\draw (1,9) arc (180:360:2);
\draw (9,9) arc (180:360:2);
\begin{scope}
\clip (0,0) rectangle (14,9);
\draw (7,0) ellipse (6 and 4);
\end{scope}
\end{tikzpicture}
\hspace{10mm}
\begin{tikzpicture}[scale=.25]
    \node at (1,9){$\bullet$};
    \node at (5,9){$\bullet$};
    \node at (9,9){$\bullet$};
    \node at (13,9){$\bullet$};
    \node at (1,0){$\bullet$};
    \node at (5,0){$\bullet$};
    \node at (9,0){$\bullet$};
    \node at (13,0){$\bullet$};
\draw (0,0) -- (14,0);
\draw (0,9) -- (14,9);
\draw (5,0) arc (180:0:2);
\draw (1,9) arc (180:360:2);
\draw (9,9) arc (180:360:2);
\begin{scope}
\clip (0,0) rectangle (14,9);
\draw (7,0) ellipse (6 and 4);
\end{scope}
\draw[red][dotted] (0, 2.5) -- (14, 2.5);
\draw[red][dotted] (0, 3.7) -- (14, 3.7);
\draw[red][dotted] (0, 6) -- (14, 6);
\end{tikzpicture}
\hspace{10mm}
    \begin{tikzpicture}[scale=.25]
    \node at (1,9){$\bullet$};
    \node at (5,9){$\bullet$};
    \node at (9,9){$\bullet$};
    \node at (13,9){$\bullet$};
    \node at (1,0){$\bullet$};
    \node at (5,0){$\bullet$};
    \node at (9,0){$\bullet$};
    \node at (13,0){$\bullet$};
\draw (0,0) -- (14,0);
\draw (0,9) -- (14,9);
\draw (5,0) arc (180:0:2);
\draw (1,9) arc (180:360:2);
\draw (9,9) arc (180:360:2);
    \node at (2.5,2.5){$\bullet$};
    \node at (11.5,2.5){$\bullet$};
\draw (1, 0) -- (2.5, 2.5);
\draw (13, 0) -- (11.5, 2.5);
\draw (2.5, 2.5) -- (5, 3.7);
\draw (11.5, 2.5) -- (9, 3.7);
    \node at (5,3.7){$\bullet$};
    \node at (9,3.7){$\bullet$};
\draw (0, 2.5) -- (14, 2.5);
\draw (0, 3.7) -- (14, 3.7);

\begin{scope}
        \clip (0,3.7) rectangle (14,9);
        \draw (7,3.7) ellipse (2 and 0.5);
        \end{scope}
\draw (0, 6) -- (14, 6);
\end{tikzpicture}
\caption{An example of how a $\TL_4$ diagram is sliced.}
\label{fig:slice}
\end{figure}

The action of the diagram in Figure~\ref{fig:slice} is as follows: 
\[\begin{tikzcd}
	{\mathbb C^2 \otimes \mathbb C^2 \otimes \mathbb C^2 \otimes \mathbb C^2} & {\mathbb C^2 \otimes \mathbb C^2} & {\mathbb C^2 \otimes \mathbb C^2} & {\mathbb C} & {\mathbb C^2 \otimes \mathbb C^2 \otimes \mathbb C^2 \otimes \mathbb C^2}
	\arrow["{\mathrm{id} \otimes \epsilon \otimes \id}", from=1-1, to=1-2]
	\arrow["{\mathrm{id} \otimes \id}", from=1-2, to=1-3]
	\arrow["\epsilon", from=1-3, to=1-4]
	\arrow["{\delta \otimes \delta}", from=1-4, to=1-5]
\end{tikzcd}. \]
\end{ex}
\begin{rem}
The reason the spin representation is defined in this way is that the maps $\eps$ and $\delta$ are in fact homomorphisms of $\quantum$-modules (see Section~\ref{cor1.10section}). 
\end{rem}

We can also define the spin representation in terms of its basis $v_- \otimes v_-$, $v_- \otimes v_+$, $v_+ \otimes v_-$, and $v_+ \otimes v_+$. 
\begin{lem}[\cite{degroot}]\label{spinreplem}
The $\CC$-module $(\CC^2)^{\otimes n}$ equipped with the map $\zeta : \TL_n \rightarrow \End((\CC^2)^{\otimes n} )$ where
\[ \zeta(e_i) = \displaystyle \begin{pmatrix}
0 & 0 & 0 & 0\\
0 & -q & 1 & 0\\
0 & 1 & -q^{-1} & 0\\
0 & 0 & 0 & 0
\end{pmatrix}_{i, i+1}\] is the spin representation. Here, $B_{i, i+1}$, where $B$ is a matrix, denotes $B$ acting on the $i$ and $(i+1)$th components. 
\end{lem}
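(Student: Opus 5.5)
The plan is to compute directly the operator by which the diagram $e_i$ acts in the spin representation defined above, and compare it with the matrix in the statement.

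\textbf{Step 1: factor $e_i$.} In the Temperley-Lieb category, $e_i \in \hom([n],[n])$ is the composite $\delta_i^n \eps_i^n$. The bottom part $\eps_i^n$ caps the $i$th and $(i+1)$th strands, and the top part $\delta_i^n$ reopens them as a cup. Since the action of $\bigoplus_{m,n}\hom([m],[n])$ on $\bigoplus_n (\CC^2)^{\otimes n}$ respects composition, $\zeta(e_i) = \delta_i^n \circ \eps_i^n$.

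\textbf{Step 2: reduce to $n=2$.} Both maps act as the identity on every tensor factor other than the $i$th and $(i+1)$th. Hence
\[
\zeta(e_i) = \id^{\otimes i-1}\otimes(\delta\circ\eps)\otimes\id^{\otimes n-i-1}.
\]
So it suffices to compute the $4\times 4$ matrix of $\delta\circ\eps$ on $\CC^2\otimes\CC^2$.

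\textbf{Step 3: compute $\delta\circ\eps$.} Use the ordered basis $v_+\otimes v_+,\ v_+\otimes v_-,\ v_-\otimes v_+,\ v_-\otimes v_-$, which is the ordering implicit in the matrix.
\begin{itemize}
\item $v_\pm\otimes v_\pm \mapsto 0$, so the first and last columns vanish.
\item $v_+\otimes v_- \mapsto -q\,(v_+\otimes v_- - q^{-1}v_-\otimes v_+) = -q\, v_+\otimes v_- + v_-\otimes v_+$, giving the column $(0,-q,1,0)^T$.
\item $v_-\otimes v_+ \mapsto v_+\otimes v_- - q^{-1}v_-\otimes v_+$, giving the column $(0,1,-q^{-1},0)^T$.
\end{itemize}
These columns are exactly the stated matrix.

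\textbf{Step 4: check the relations.} This confirms that $\zeta$ is a well-defined algebra map $\TL_n\to\End((\CC^2)^{\otimes n})$. The relations hold automatically by Theorem~\ref{tlcat}, but they can also be checked directly. For example, $\eps\circ\delta(1) = -q + (-q^{-1})\cdot 1 = \beta$, which gives $e_i^2=\beta e_i$. The relation $e_ie_{i\pm1}e_i=e_i$ is the zigzag relation (ii).

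\textbf{Main obstacle.} There is none of substance. The only care needed is basis ordering, and getting the orientation of slicing right so that $\eps$ is applied first. The statement's ordering lists $v_-\otimes v_-$ first, but the middle $2\times2$ block is unchanged under reversing the order only if the $\pm$ roles are swapped consistently. I would fix the convention that matches the computed block.
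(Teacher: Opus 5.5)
Your argument is correct and matches the paper's own justification. The remark after the lemma writes $\delta$ as the column $(0,1,-q^{-1},0)^T$ and $\eps$ as the row $(0,-q,1,0)$, and observes that their product is the stated matrix, i.e.\ $\zeta(e_i)=\delta_i^n\circ\eps_i^n$, which is exactly your Steps 1--3.

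Your closing remark needs tidying. Reversing the basis order to $v_-\otimes v_-, v_-\otimes v_+, v_+\otimes v_-, v_+\otimes v_+$ (the order listed in the sentence before the lemma) does change the middle block: it swaps $-q$ and $-q^{-1}$. So the stated matrix holds precisely for the ordering $v_+\otimes v_+, v_+\otimes v_-, v_-\otimes v_+, v_-\otimes v_-$ that you used in Step 3.
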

\begin{rem}
Note that if we set $\delta = \displaystyle \begin{pmatrix}
0\\
1\\
-q^{-1}\\
0
\end{pmatrix}$ and $\eps = \begin{pmatrix}
0 & -q & 1 & 0
\end{pmatrix}$, we obtain that $\zeta(e_i) = \delta_i^n \circ \eps_i^n$, which is consistent with our definition of the spin representation. 
\end{rem}

\section{Spherical and Aspherical Modules}
In this section, we introduce spherical and aspherical modules, which are specific modules induced from the trivial and sign representations on a subalgebra of the Hecke algebra. The spherical and aspherical modules also have a canonical basis, similarly defined using the bar involution and the upper triangularity property. 

We define an action of $\mH$ on $(\CC^2)^{\otimes n}$. To define the action of $\mathcal H$, we must first define the \emph{$R$-matrix} $\check R_{11}: \CC^2 \otimes \CC^2 \rightarrow  \CC^2 \otimes \CC^2$ by $\check R_{11} = q^{1/2} \delta \circ \eps + q^{-1/2} \id$. Note that $\check R_{11}$ is a nontrivial isomorphism of $\CC^2 \otimes \CC^2$. Now, we can define $\mathcal H \rightarrow \End((\CC^2)^{\otimes n})$ by \[H_i \mapsto 1^{\otimes(i-1)}\otimes q^{-1/2}\check{R}_{11}\otimes 1^{\otimes(n-i-1)}.\]
\begin{rem}
When $q = 1$, this is just the natural action of $S_n$ on $(\CC^2)^{\otimes n}$ by permuting components. 
\end{rem}

Given a subset $J \subseteq S$, let the corresponding coxeter group be $W_J \subseteq W$. 

\begin{defn}
Let $W^J \subseteq W$ denote the set of elements of minimal length in each left $W_J$-coset. 
\end{defn}
We have a bijection $W_J \times W^J \cong W$ by $(x, y) \mapsto xy$. Denote $\mathcal H_J  = \mathcal H(W_J)$. Let $\CC_\mathrm{triv}$ denote the $\mH_J$-module mapping $H_s$ to $q^{-1}$ for every $s \in J$ and let $\CC_\mathrm{sgn}$ denote the $\mH_J$-module mapping $H_s$ to $-q$ for every $s \in J$ 

\begin{defn}[Spherical and Aspherical Modules]
The \emph{spherical module} is \[\mathcal M = \Ind_{\mathcal H_J}^{\mathcal H} \CC_\mathrm{triv} =  \mathcal H \otimes_{\mathcal H_J} \CC_\mathrm{triv}\] while the \emph{aspherical module} is  \[\mathcal N = \Ind_{\mathcal H_J}^{\mathcal H} \CC_\mathrm{sgn} =  \mathcal H \otimes_{\mathcal H_J} \CC_\mathrm{sgn}.\]
\end{defn}

The spherical module has a basis similar to the Kazhdan-Lusztig Basis of the Hecke algebra. The standard bases are $M_x = H_x \otimes 1 \in \mathcal M$ and $N_x = H_x \otimes 1 \in \mathcal N$ for $x \in W^J$. The bar involution on $\mathcal H$ is compatible with these bases, by $\overline{h \otimes 1} = \overline h \otimes 1$ for $h \in \mathcal H$. 

Similar to Lemma~\ref{heckeselfdual} and Theorem~\ref{klbasis}, we have the following. 

\begin{thm}[\cite{deodhar}]
For all $x \in W^J$, there exists a unique self-dual element $\underline{M}_x \in \mathcal M$ such that $\underline{M}_x \in M_x + \sum_y q^{-1}\ZZ[q^{-1}] M_y$. We can also do the same for $\mathcal N$, obtaining elements $\underline N_x$. These form a basis of $\mathcal M$ and $\mathcal N$, respectively. 

In fact, $\underline{M}_x \in M_x + \sum_{y\prec x} q^{-1}\ZZ[q^{-1}] M_y$, and the same also holds for $\mathcal N$. 
\end{thm}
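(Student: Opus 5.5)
We follow the standard Kazhdan--Lusztig/Deodhar strategy, treating $\mathcal M$ and $\mathcal N$ in parallel. Write $\mathcal H_J$-character $\chi(H_s)=u$ with $u=q^{-1}$ for $\mathcal M$ and $u=-q$ for $\mathcal N$. The first step is a description of the standard bases. Using $W_J\times W^J\cong W$, here read as every $w\in W$ factoring uniquely as $w=x\,v$ with $x\in W^J$ minimal in $wW_J$ and $v\in W_J$, with $\ell(w)=\ell(x)+\ell(v)$, we get $H_w\otimes 1=H_xH_v\otimes1=\chi(H_v)M_x$. Hence $\{M_x\}_{x\in W^J}$ is a $\ZZ[q,q^{-1}]$-basis. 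We also check that the bar involution is well defined on $\mathcal M$: the quadratic relation gives $\overline{H_s}=H_s^{-1}=H_s+q-q^{-1}$, which acts on $\CC_\mathrm{triv}$ by $q^{-1}+q-q^{-1}=q=\overline{q^{-1}}$, and on $\CC_\mathrm{sgn}$ by $-q+q-q^{-1}=-q^{-1}=\overline{-q}$. So $\chi$ is bar-compatible, and $h\otimes1\mapsto\bar h\otimes 1$ is a well-defined antilinear involution.

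Second, we establish triangularity of the bar involution: $\overline{M_x}\in M_x+\sum_{y\prec x}\ZZ[q,q^{-1}]M_y$. Write $\overline{H_x}=\sum_{w\preceq x}r_{w,x}H_{w}$ with $r_{x,x}=1$, which is standard from $\overline{H_s}=H_s+(q-q^{-1})$ and induction on $\ell(x)$. Then project each $H_w$ to $\chi(H_v)M_{w'}$, where $w'\in W^J$ is the minimal coset representative of $w$. If $w\preceq x$, then $w'\preceq w\preceq x$. Equality $w'=x$ forces $w=x$, since $\ell(w)\le\ell(x)$ and $w\in xW_J$ has length at least $\ell(x)$ with equality only at $x$. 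This gives the triangularity with leading coefficient $1$.

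Third, we construct and prove uniqueness by the usual induction on $x$ in Bruhat order (or length). Suppose $\underline M_y$ has been built for all $y\prec x$. Expand $\overline{M_x}-M_x=\sum_{y\prec x}a_yM_y$ and rewrite it as $\sum_{y\prec x}b_y\underline M_y$, with $b_y$ determined by triangularity. Applying the bar involution twice gives $\overline{b_y}=-b_y$. Any such Laurent polynomial splits as $b_y=c_y-\overline{c_y}$ with $c_y\in q^{-1}\ZZ[q^{-1}]$. Then $\underline M_x:=M_x+\sum_y c_y\underline M_y$ is self-dual and lies in $M_x+\sum_{y\prec x}q^{-1}\ZZ[q^{-1}]M_y$.

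For uniqueness, suppose two elements satisfy the weak condition, with coefficients only required to lie in $q^{-1}\ZZ[q^{-1}]$ and no ordering assumed. Their difference $D$ is self-dual with all coefficients in $q^{-1}\ZZ[q^{-1}]$. Take a Bruhat-maximal $y$ with nonzero coefficient $d_y$ in $D$. By triangularity, the $M_y$-coefficient of $\overline D$ is $\overline{d_y}\in q\ZZ[q]$. It must equal $d_y\in q^{-1}\ZZ[q^{-1}]$, so $d_y=0$, a contradiction. Hence the constructed element is the unique one, and it satisfies the stronger $y\prec x$ statement. The basis property follows because the transition matrix to $\{M_x\}$ is unitriangular.

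The main obstacle is the second step: verifying carefully that the parabolic projection preserves Bruhat-lower-ness and gives leading coefficient exactly $1$. This needs the facts that $w\preceq x$ implies $w'\preceq x$, and that $x$ is the unique minimal-length element of $xW_J$. We would cite these standard Coxeter facts (the subword property together with Deodhar's lemma) rather than reprove them.
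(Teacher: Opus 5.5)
Your proof is correct. The paper gives no argument of its own here (it simply cites Deodhar), and what you write is exactly the standard Kazhdan--Lusztig/Deodhar argument behind that citation: compatibility of the character with the bar involution, unitriangularity of the bar involution on $\{M_x\}$, inductive construction via $b_y=c_y-\overline{c_y}$, and uniqueness from $q^{-1}\ZZ[q^{-1}]\cap q\ZZ[q]=0$.

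Two small remarks. First, your factorization $w=xv$ with $v\in W_J$ on the right is the one compatible with $\mathcal H\otimes_{\mathcal H_J}\CC_\mathrm{triv}$ as written; the paper's stated bijection $W_J\times W^J\to W$, $(x,y)\mapsto xy$, puts $W_J$ on the other side. Second, the ``obstacle'' you flag is already settled by your own step-two argument. The only input you leave implicit is that $\mathcal H$ is free as a right $\mathcal H_J$-module on $\{H_x\}_{x\in W^J}$, which is what gives linear independence of the $M_x$.
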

\begin{rem}
Similarly to the canonical basis of the Hecke algebra, we can use $q\ZZ[q]$ instead of $q^{-1}\ZZ[q^{-1}]$. 
\end{rem}

\section{The Dual Canonical Basis Through the Temperley-Lieb Algebra}\label{bijectionsection}
In Section~\ref{heckesection}, we defined a canonical basis for the Hecke algebra that is invariant under an involution and satisfies upper triangularity properties. Furthermore, each canonical basis element corresponded to an element in the standard basis. We can construct an analog for $(\CC^2)^{\otimes n}$. A standard basis is $\{v_{\pm} \otimes v_{\pm} \otimes \dots \otimes v_{\pm}\}$. We define Lusztig's dual canonical basis as follows. 

\begin{thm}[Dual Canonical Basis \cite{khovanov}]
Given $v_{k_n} \otimes v_{k_{n-1}} \otimes \dots \otimes v_{k_1}$, where $k_i \in \{-, +\}$ for all $i$, there exists a unique $v_{k_n}\heart\dots\heart v_{k_1} \in (\CC^2)^{\otimes n}$ such that 
\begin{enumerate}
\item[(i)] $v_{k_n}\heart\dots\heart v_{k_1}$ is self dual under the bar involution (see \cite{khovanov}), 
\item[(ii)] $v_{k_n}\heart\dots\heart v_{k_1}-v_{k_n}\otimes\dots\otimes v_{k_1}$ is in $q^{-1}\ZZ[q^{-1}]$, and
\item[(iii)] If we consider $+$ to have value $+1$ and $-$ to have value $-1$, $v_{k_n}\heart\dots\heart v_{k_1}$ is equal to $v_{k_n}\otimes\dots\otimes v_{k_1}$ plus a linear combination of elements $v_{l_n}\otimes\dots\otimes v_{l_1}$ such that $\sum_{i = 1}^n l_i = \sum_{i = 1}^n k_i$ and $\sum_{i = 1}^{n'} l_i = \sum_{i = 1}^{n'} k_i$ for all $1 \leq n' \neq n$. 
\end{enumerate}
The set of elements $v_{k_n}\heart\dots\heart v_{k_1} \in (\CC^2)^{\otimes n}$ forms a basis of $(\CC^2)^{\otimes n}$ called the \emph{dual canonical basis}. 
\end{thm}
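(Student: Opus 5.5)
We prove existence and uniqueness by the standard Lusztig lemma on a triangular involution, applied weight space by weight space. First we fix the ingredients. The bar involution $\psi$ on $(\CC^2)^{\otimes n}$ (as in \cite{khovanov}) is antilinear with respect to $q\mapsto q^{-1}$ and squares to the identity. It preserves each weight space $W_m$, which is spanned by the standard vectors $v_{k_n}\otimes\dots\otimes v_{k_1}$ with $\sum k_i=m$. On $W_m$ we order standard vectors by the partial order of condition (iii), comparing partial sums $\sum_{i\le n'} l_i$ versus $\sum_{i\le n'} k_i$ (dominance order on $\pm$-sequences).

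The key input is triangularity. We need $\psi(v_k)\in v_k+\sum_{l<k}\ZZ[q,q^{-1}]v_l$. To get it, we write $\psi=\Theta\circ(\overline{\cdot}\otimes\dots\otimes\overline{\cdot})$, where $\Theta$ is the quasi-$R$-matrix and the inner map is the componentwise bar map fixing $v_\pm$. We argue inductively on $n$, using $\psi_{n}=\Theta^{(n-1,1)}(\psi_{n-1}\otimes\psi_1)$. The quasi-$R$-matrix is $1+\sum_{r\ge1}c_r(q)F^r\otimes E^r$ (or with $E,F$ swapped, depending on convention). Each term with $r\ge1$ moves $+$'s to the right and $-$'s to the left. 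So it strictly lowers the partial sums in the dominance order while preserving total weight. This yields the unitriangularity, with diagonal entries equal to $1$. Making this step rigorous, in particular checking that the direction of the $E/F$ action matches the inequality in (iii) under the paper's convention $v_{k_n}\otimes\dots\otimes v_{k_1}$, is the main obstacle. I would verify it first for $n=2$ by explicit computation.

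Given triangularity, we construct the basis by induction down the order within each $W_m$. Suppose $\heart$-elements exist for all $l<k$. Then write $\psi(v_k)=v_k+\sum_{l<k}a_{lk}v_l$. Re-express the right-hand side in the basis $v_l^\heart$ with $l<k$, which by induction is unitriangular with respect to the $v_l$. The condition $\psi^2=1$ gives $\bar b_{lk}=-b_{lk}$ for the top coefficient (and inductively for all coefficients). Hence each $b_{lk}$ splits uniquely as $p_{lk}-\bar p_{lk}$ with $p_{lk}\in q^{-1}\ZZ[q^{-1}]$. Setting $v_k^\heart=v_k+\sum_{l<k}p_{lk}v_l^\heart$ gives a self-dual element satisfying (ii) and (iii); the sum is finite since $W_m$ is finite-dimensional.

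For uniqueness, suppose two elements satisfy (i)–(iii). Their difference $d$ is self-dual and lies in $\sum_l q^{-1}\ZZ[q^{-1}]v_l$. We expand $d$ in the $\heart$-basis and take a maximal $l$ with nonzero coefficient $c$. Triangularity forces $c\in q^{-1}\ZZ[q^{-1}]$, while self-duality forces $\bar c=c$. Hence $c=0$, so $d=0$. Finally, the transition matrix from $\{v_k\}$ to $\{v_k^\heart\}$ is unitriangular, so the $v_k^\heart$ form a basis of $(\CC^2)^{\otimes n}$.
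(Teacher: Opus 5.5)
The paper does not prove this statement; it quotes it from \cite{khovanov}, where it rests on Lusztig's involution $\psi=\Theta\circ(\overline{\cdot}\otimes\cdots\otimes\overline{\cdot})$ and the same triangular-involution lemma you use. Your proposal therefore supplies the standard argument behind the citation, and its existence and uniqueness steps are sound. Two small refinements. Applying $\psi$ to $\psi(v_k)=v_k+\sum_{l<k}b_{lk}v_l^\heart$ gives $\bar b_{lk}=-b_{lk}$ for every $l$ at once, with no further induction. You also need $b_{lk}\in\ZZ[q,q^{-1}]$; this holds because $\psi$ has entries in $\ZZ[q,q^{-1}]$ and the change of basis to the $v_l^\heart$ is unitriangular over $\ZZ[q^{-1}]$.

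Two further points should be made explicit.

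First, you silently read (iii) as a dominance inequality, (ii) as a condition on coefficients, and $q$ as an indeterminate. You should say so, because the printed equalities for all $n'$ force $l=k$, and then no self-dual element with leading term $v_+\otimes v_-$ exists.

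Second, the direction check you flag comes out with the opposite sign to the one you wrote. With the paper's coproduct $\bar\Delta$, compare the terms of the form $(\cdot)\otimes E$ in $\bar\Delta(E)\,\Theta=\Theta\,(\overline{\cdot}\otimes\overline{\cdot})(\bar\Delta(E))$. This forces $\Theta=1+(q-q^{-1})F\otimes E+\cdots$, so that
$\psi_n(x\otimes v_+)=\psi_{n-1}(x)\otimes v_+$ and $\psi_n(x\otimes v_-)=\psi_{n-1}(x)\otimes v_-+(q-q^{-1})F\psi_{n-1}(x)\otimes v_+$.
This makes $\delta(1)=v_+\otimes v_- -q^{-1}v_-\otimes v_+$ self-dual and agrees with rule (ii) of Theorem~\ref{thm1.8}.

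However, the partial sums $\sum_{i\le n'}l_i$ you chose run over the rightmost factors, since $v_{k_1}$ is the rightmost. Moving a $+$ to the right therefore \emph{raises} them rather than lowering them. So your order must be defined by $l<k$ iff $\sum_{i\le n'}l_i\ge\sum_{i\le n'}k_i$ for all $n'$, with equality at $n'=n$. That is the intended meaning of (iii). It matches Example~\ref{exampletwo}, where $v_+\heart v_-\heart v_-\heart v_+$ contains $v_-\otimes v_+\otimes v_-\otimes v_+$. With the order fixed this way, your induction on $n$ gives the required unitriangularity, and the rest of the proof goes through.
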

\begin{rem}
The first condition is the self-dual condition, while the second and third conditions give the upper triangularity properties. 
\end{rem}
\subsection{Relation of $(\CC^2)^{\otimes n}$ to Induced $\TL_n$-modules}
We introduce how the diagrammatic basis of the Temperley-Lieb algebra relates to the dual canonical basis in this subsection. For the rest of this paper, let $J = \{ s_1, \dots, s_{k-1}, s_{k+1}, \dots, s_{n-1}\}$, so $\mH_J$ is precisely $\mH_k \otimes \mH_{n-k}$. Furthermore, let $(\CC^2)^{\otimes n}_k$ denote the subspace of $(\CC^2)^{\otimes n}$ generated by the standard basis elements where $k$ of the components are $v_-$ and the rest are $v_+$. 

\begin{prop}[\cite{fkk}] \label{fkk}
There is an embedding of $\mH$-modules $\iota\colon \mM \hookrightarrow (\CC^2)^{\otimes n}$ that maps the canonical basis of the left hand side to the subset of the dual canonical basis of the right hand side that lies in $(\CC^2)^{\otimes n}_k$ by $\iota\colon H_{s_i} \otimes 1 \mapsto H_{s_i} \cdot \underbrace{v_- \otimes \dots \otimes v_-}_k \otimes \underbrace{v_+ \otimes \dots \otimes v_+}_{n-k}$. 
\end{prop}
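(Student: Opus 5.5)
We would prove Proposition~\ref{fkk} in three stages: first construct $\iota$ as a module map, then show it is injective, and finally compare canonical bases using the uniqueness part of the characterization of the dual canonical basis.

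\textbf{Step 1: $\iota$ is a well-defined $\mH$-module map.} Let $u_k = v_-^{\otimes k}\otimes v_+^{\otimes(n-k)}$. By Frobenius reciprocity, a map $\mM=\mH\otimes_{\mH_J}\CC_\mathrm{triv}\to(\CC^2)^{\otimes n}$ sending $1\otimes 1\mapsto u_k$ exists exactly when $H_s u_k = q^{-1}u_k$ for all $s\in J$. For $s=s_i$ with $i\ne k$, the $i$th and $(i+1)$th factors of $u_k$ are equal, namely $v_-\otimes v_-$ or $v_+\otimes v_+$. The matrix of $\zeta(e_i)=\delta\circ\eps$ in Lemma~\ref{spinreplem} kills both of these vectors. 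Hence
\[
H_i u_k = q^{-1/2}\bigl(q^{1/2}\delta\eps+q^{-1/2}\id\bigr)u_k = q^{-1}u_k .
\]
So $\iota$ exists, and $\iota(M_x)=H_x u_k$ for $x\in W^J$.

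\textbf{Step 2: injectivity and image.} Elements of $W^J$ are the minimal coset representatives for $S_k\times S_{n-k}$. They correspond bijectively to the $\binom nk$ arrangements of $k$ minus signs among $n$ slots. We would check by induction on $\ell(x)$, using the explicit $2\times 2$ block of $H_i$, that $H_x u_k$ equals the standard basis vector $v_{x(k)}$ (the permuted pure tensor) plus a combination of pure tensors that are lower in the order of condition (iii), with coefficients in $\ZZ[q,q^{-1}]$. The key point is that applying $H_{s}$ with $\ell(sx)>\ell(x)$ to $v_-\otimes v_+$ in positions $i,i+1$ yields $v_+\otimes v_-$ plus a multiple of $v_-\otimes v_+$. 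This triangularity shows $\iota$ maps $\mM$ isomorphically onto $(\CC^2)^{\otimes n}_k$, since the dimensions agree.

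\textbf{Step 3: canonical bases.} We need $\iota$ to intertwine the bar involution on $\mM$, given by $\overline{h\otimes1}=\bar h\otimes1$, with Lusztig's bar involution $\psi$ on $(\CC^2)^{\otimes n}$. It is known (\cite{khovanov}) that $\psi$ is $\mH$-semilinear, that is, $\psi(hv)=\bar h\,\psi(v)$, and that $\psi$ fixes $u_k$ because $u_k$ is extremal in its weight space. Granting this, $\iota(\overline{m})=\psi(\iota(m))$. Then $\iota(\underline M_x)$ is self-dual, and by Step 2 together with the triangularity of $\underline M_x$ it equals $v_{x(k)}$ plus $q^{-1}\ZZ[q^{-1}]$-combinations of lower standard tensors. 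These are exactly properties (i)--(iii) of the dual canonical basis element indexed by $x(k)$, so uniqueness gives $\iota(\underline M_x)=v_{x(k)}$ with $\otimes$ replaced by $\heart$.

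\textbf{Main obstacle.} We expect the hardest part of Step 3 to be the coefficient condition. The transition matrix from $\{H_x u_k\}$ to the standard tensors has entries that are powers of $q^{\pm1}$. One therefore has to verify that the Bruhat order on $W^J$ matches the order in (iii), and that the $q^{-1}\ZZ[q^{-1}]$ condition is preserved under this change of basis. We would first try to show by the same induction that $H_x u_k = v_{x(k)}+\sum q^{-1}\ZZ[q^{-1}]\,v_{y(k)}$ with $y\prec x$. This would make the transition matrix unitriangular with off-diagonal entries in $q^{-1}\ZZ[q^{-1}]$. If this fails, we would instead rescale by the sign convention of Remark~\ref{altrem}.
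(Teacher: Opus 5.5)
The paper gives no proof of this proposition. It is quoted from \cite{fkk}, and it reappears as Proposition~\ref{sphericalcanonical} (quoted from \cite{khovanov}), because your $\iota$ is exactly Khovanov's $\widetilde{Seq}$. So there is no argument to compare against. Your three-step outline is the standard route and it goes through.

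\textbf{The obstacle you flag disappears after one computation.} With $H_i=\delta\eps+q^{-1}$ on slots $i,i+1$,
\[
H_i(v_-\otimes v_+)=\bigl(v_+\otimes v_- - q^{-1}v_-\otimes v_+\bigr)+q^{-1}v_-\otimes v_+=v_+\otimes v_-,
\]
so the ``multiple of $v_-\otimes v_+$'' in your key point is zero. Induct along a reduced word for $x\in W^J$: the partial products stay minimal coset representatives, and each step moves a $v_-$ one slot right past a $v_+$. This gives $\iota(M_x)=H_xu_k=v_{x(k)}$ exactly.

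The transition matrix is therefore the identity, and $\iota(\underline M_x)\in v_{x(k)}+\sum_{y\prec x}q^{-1}\ZZ[q^{-1}]\,v_{y(k)}$ is immediate. What remains is the standard fact that Bruhat order on these Grassmannian permutations is the partial-sum order on the positions of the $v_-$'s, which is the order in (iii).

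This exactness is what makes Step~3 work. Since $H_i(v_+\otimes v_-)=v_-\otimes v_++(q^{-1}-q)\,v_+\otimes v_-$, any lower terms could pick up the factor $q^{-1}-q$, and the $q^{-1}\ZZ[q^{-1}]$ condition would fail to propagate. Falling back on Remark~\ref{altrem} is unnecessary and would not have repaired that.

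\textbf{Why $\psi$ fixes $u_k$.} The correct reason is that $u_k$ is the \emph{minimum} of its weight space for the order in (iii), and $\psi$ is unitriangular for that order. ``Extremal'' is not enough, because the opposite extreme is not fixed. Already for $n=2$, $v_-\otimes v_+$ and $v_+\heart v_-=v_+\otimes v_--q^{-1}v_-\otimes v_+$ are $\psi$-invariant, which forces $\psi(v_+\otimes v_-)=v_+\otimes v_-+(q-q^{-1})\,v_-\otimes v_+$.

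\textbf{What your argument actually rests on.} The real content is the semilinearity $\psi(H_iv)=H_i^{-1}\psi(v)$ for this particular normalization of $H_i$, which you cite rather than prove. That is essentially what \cite{fkk} establishes, so your proof is complete modulo the same input the paper takes on faith. You can check it directly on $\CC^2\otimes\CC^2$ using the values of $\psi$ above. It extends to $(\CC^2)^{\otimes n}$ because $\delta\circ\eps$ is a $\bU$-module map and $\psi$ is built from quasi-$R$-matrices acting through the coproduct. Compared with the bare citation, your route isolates this one fact and reduces everything else to elementary bookkeeping.
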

\begin{lem}[\cite{degroot}]\label{factors}
The action of $\mathcal H$ on $(\CC^2)^{\otimes n}$ factors through the action of $\TL_n$ by $\phi_q\colon H_i \mapsto e_i + q^{-1}$. 
\end{lem}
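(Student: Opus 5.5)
The plan is to show two things. First, the assignment $\phi_q(H_i) = e_i + q^{-1}$ extends to an algebra homomorphism $\phi_q\colon \mH \to \TL_n$. Second, the composite $\zeta \circ \phi_q$ agrees with the given action of $\mH$ on $(\CC^2)^{\otimes n}$ on the generators $H_i$. Since the $H_i$ generate $\mH$, the two actions then agree everywhere, which is exactly the statement that the action factors through $\TL_n$.

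For the second point I would compute directly. The action is
\[
H_i \mapsto 1^{\otimes(i-1)}\otimes q^{-1/2}\check R_{11}\otimes 1^{\otimes(n-i-1)},
\qquad
q^{-1/2}\check R_{11} = \delta\circ\eps + q^{-1}\id .
\]
By the remark after Lemma~\ref{spinreplem}, $\zeta(e_i) = \delta_i^n\circ\eps_i^n$, which is $\delta\circ\eps$ placed in slots $i,i+1$. Hence $H_i$ acts by $\zeta(e_i) + q^{-1}\id = \zeta(e_i + q^{-1})$, as required.

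For the first point I would check that the elements $T_i := e_i + q^{-1}$ satisfy the defining relations of $\mH$ given in the remark on the presentation: the quadratic relation and the braid relations.

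\textbf{Quadratic relation.} We need $(T_i - q^{-1})(T_i + q) = 0$. The left side is $e_i\,(e_i + q + q^{-1}) = e_i^2 - \beta e_i = 0$, since $\beta = -q-q^{-1}$ and $e_i^2 = \beta e_i$.

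\textbf{Commuting generators.} For $|i-j|\ge 2$, $T_i$ and $T_j$ commute because $e_i$ and $e_j$ commute.

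\textbf{Braid relation.} This is the main computation: $T_iT_{i+1}T_i = T_{i+1}T_iT_{i+1}$. I would expand both sides in $e_i$, $e_{i+1}$ and $c = q^{-1}$. The left side is
\[
e_ie_{i+1}e_i + c\,(e_ie_{i+1} + e_{i+1}e_i) + c\,e_i^2 + c^2(2e_i + e_{i+1}) + c^3 .
\]
Now substitute $e_ie_{i+1}e_i = e_i$ and $e_i^2 = \beta e_i$. The terms symmetric in $i \leftrightarrow i+1$ cancel against the right side. What remains is the coefficient of $e_i$ minus that of $e_{i+1}$, namely $1 + c\beta + c^2$ on each side, and we need it to match the opposite side's $1 + c\beta + c^2$ for $e_{i+1}$ once asymmetric terms are accounted for. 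Concretely, the left side has $e_i$ coefficient $1 + c\beta + 2c^2$ and $e_{i+1}$ coefficient $c^2$; the right side has these swapped. So equality requires $1 + c\beta + c^2 = 0$. With $c = q^{-1}$ and $\beta = -q-q^{-1}$ we get $1 - 1 - q^{-2} + q^{-2} = 0$, so the relation holds.

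The main obstacle is this sign and normalization bookkeeping. The constant in $\phi_q$ must be compatible with $\beta$, and the check above confirms that $q^{-1}$ is. Since these relations present $\mH$ (scalars extended to $\CC$), $\phi_q$ is a well-defined algebra homomorphism, and combined with the generator computation the lemma follows.
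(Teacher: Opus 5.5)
Your proof is correct. The quadratic relation reduces to $e_i^2-\beta e_i=0$. The braid relation reduces to the identity
\[
T_iT_{i+1}T_i-T_{i+1}T_iT_{i+1}=(1+q^{-1}\beta+q^{-2})(e_i-e_{i+1}),
\]
whose coefficient vanishes for $\beta=-q-q^{-1}$. On generators, $q^{-1/2}\check R_{11}=\delta\circ\eps+q^{-1}\id$ gives $H_i\mapsto\zeta(e_i+q^{-1})$.

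The paper gives no proof of this lemma; it is quoted from \cite{degroot}. So there is no competing route to compare against, and your direct verification is the standard argument.

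Three small presentational points:
\begin{enumerate}
\item[(1)] The sentence beginning ``What remains is the coefficient of $e_i$ minus that of $e_{i+1}$\ldots'' is garbled. Replace it with the displayed identity above, which your ``Concretely'' sentence already establishes.
\item[(2)] Since $\mH$ is defined over $\ZZ[q,q^{-1}]$, $\phi_q$ is really a homomorphism out of the specialization $\mH\otimes_{\ZZ[q,q^{-1}]}\CC$ at the given complex $q$. Your parenthetical ``scalars extended to $\CC$'' gestures at this, and it is worth saying explicitly.
\item[(3)] Your argument also shows, in passing, that the formula for the $\mH$-action on $(\CC^2)^{\otimes n}$ really defines an action, because it equals the composite $\zeta\circ\phi_q$ of algebra homomorphisms. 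The paper takes this for granted.
\end{enumerate}
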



We present one more general lemma of induced representations that will be useful later in the section. 

\begin{lem}\label{indlemma}
Let $A$ be a subalgebra of algebra $B$, let $\phi\colon B \rightarrow B'$ be a homomorphism of algebras, and define $A' = \phi(A)$. Let $V$ be a $A'$-module. There exists a map $\Ind_A^B V \rightarrow \Ind_{A'}^{B'} V$ that is surjective if $\phi$ is surjective. 
\end{lem}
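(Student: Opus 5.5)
Here $V$ is a left $A$-module via $\phi|_A\colon A\to A'$, so $\Ind_A^B V = B\otimes_A V$ makes sense. I will define the map on pure tensors by $b\otimes v\mapsto \phi(b)\otimes v$, landing in $B'\otimes_{A'}V$, and check that it is well defined.

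\textbf{Step 1: the map on the free tensor product.} The assignment $(b,v)\mapsto \phi(b)\otimes v$ is bilinear, so it induces a linear map $B\otimes_{\CC} V\to B'\otimes_{A'}V$.

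\textbf{Step 2: it descends to $B\otimes_A V$.} The kernel of $B\otimes_\CC V\to B\otimes_A V$ is spanned by the elements $ba\otimes v - b\otimes a\cdot v$ for $a\in A$. Since $a\cdot v$ means $\phi(a)\cdot v$, such an element is sent to
\[
\phi(b)\phi(a)\otimes v-\phi(b)\otimes\phi(a)v .
\]
This vanishes in $B'\otimes_{A'}V$ because $\phi(a)\in A'$. So the map factors through $\Ind_A^B V$.

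\textbf{Step 3: it is a module map.} For $b_1\in B$ we have $b_1\cdot(b\otimes v)=b_1b\otimes v$, which maps to $\phi(b_1)\phi(b)\otimes v$. Hence the map is $B$-linear, where $B$ acts on the target through $\phi$.

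\textbf{Step 4: surjectivity.} The target is spanned by pure tensors $b'\otimes v$. If $\phi$ is surjective, write $b'=\phi(b)$; then $b'\otimes v$ is the image of $b\otimes v$. So the image contains a spanning set, and the map is onto.

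The only point requiring care is Step 2, namely interpreting $V$ as an $A$-module through $\phi$ and using $\phi(A)=A'$ to move elements across the tensor sign. Everything else is formal.
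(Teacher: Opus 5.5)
Your proposal is correct and follows the same route as the paper: define $b\otimes v\mapsto\phi(b)\otimes v$ on $B\otimes V$ and check that it respects the relations defining $B\otimes_A V$. You also supply details the paper leaves implicit: viewing $V$ as an $A$-module via $\phi|_A$, checking that the kernel relations map to zero, and deducing surjectivity from pure tensors.
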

\begin{proof}
By definition, $\Ind_A^B$ is $B \otimes_A V$ and $\Ind_{A'}^{B'} V$ is $B' \otimes_{A'} V$, which are quotients of $B \otimes V$ and $B' \otimes V$, respectively. A map $B \otimes V \rightarrow B' \otimes V$ that respects the relations is $b \otimes v \mapsto \phi(b) \otimes v$. 
\end{proof}

With these results, we can show the following. 
\begin{lem} \label{indlem}
We have an isomorphism of induced modules
\[ \mM = \heckeinduced \cong \TLinduced.\]
\end{lem}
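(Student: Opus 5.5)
The plan is to build two maps, a surjection $\mM \to \TLinduced$ and a map $\TLinduced \to (\CC^2)^{\otimes n}$, and to check that their composite is the embedding $\iota$ of Proposition~\ref{fkk}. Injectivity of $\iota$ then forces the surjection to be injective, so it is an isomorphism. Throughout I regard $\mM$ as specialized at our fixed $q\in\CC$, so that all modules are over $\CC$.

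\emph{Step 1: the surjection.} Take the algebra map $\phi_q\colon \mH_n \to \TL_n$, $H_i \mapsto e_i + q^{-1}$, from Lemma~\ref{factors}.
\begin{itemize}
\item It is surjective, since $e_i = \phi_q(H_i) - q^{-1}$ and the $e_i$ generate $\TL_n$.
\item It sends $\mH_k\otimes\mH_{n-k}$ onto $\TL_k\otimes\TL_{n-k}$, the subalgebra generated by the $e_i$ with $i\neq k$.
\item The pullback of the $\TL_k\otimes\TL_{n-k}$-module $\CC_\mathrm{triv}$, on which each $e_i$ ($i\neq k$) acts by $0$, is the $\mH_J$-module on which $H_i$ acts by $0+q^{-1}=q^{-1}$. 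This is exactly $\CC_\mathrm{triv}$ for $\mH_J$.
\end{itemize}
Lemma~\ref{indlemma} therefore gives a surjection $\Phi\colon \mM \to \TLinduced$, $h\otimes 1 \mapsto \phi_q(h)\otimes 1$. It is $\mH_n$-linear when the target is viewed as an $\mH_n$-module through $\phi_q$.

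\emph{Step 2: the map to the spin representation.} Let $v = v_-^{\otimes k}\otimes v_+^{\otimes(n-k)}$. For $i\neq k$ the positions $i,i+1$ of $v$ carry $v_-\otimes v_-$ or $v_+\otimes v_+$. The matrix of $\zeta(e_i)$ in Lemma~\ref{spinreplem} kills both, so $e_i v = 0$. Hence $\CC v$ is a copy of $\CC_\mathrm{triv}$ for $\TL_k\otimes\TL_{n-k}$. By the universal property of induction (Frobenius reciprocity) there is a $\TL_n$-module map $\Psi\colon \TLinduced \to (\CC^2)^{\otimes n}$ with $\Psi(1\otimes 1)=v$.

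\emph{Step 3: the composite.} Both $\Psi\circ\Phi$ and $\iota$ are $\mH_n$-module maps $\mM\to(\CC^2)^{\otimes n}$; for $\Psi\circ\Phi$ this uses that the $\mH$-action on $(\CC^2)^{\otimes n}$ factors through $\phi_q$ (Lemma~\ref{factors}). Both send the generator $1\otimes 1$ to $v$, so they coincide. Since $\iota$ is injective, $\Phi$ is injective, and combined with Step~1, $\Phi$ is an isomorphism.

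The main obstacle is bookkeeping rather than any deep point. One must confirm that the normalizations match, namely that the $\mH$-action defined through $\check R_{11}$ really is $H_i\mapsto \zeta(e_i)+q^{-1}$, so that $\iota$ is compatible with $\phi_q$. One must also handle the passage from $\ZZ[q,q^{-1}]$ to $\CC$ in the statement of Proposition~\ref{fkk}. As a fallback for injectivity I would use a dimension count instead. The composite $\Psi\circ\Phi$ has image equal to $\iota(\mM)$, which has dimension $\binom nk$, so $\dim\TLinduced\ge\binom nk=\dim\mM$. Together with the surjection $\Phi$ this forces equality of dimensions, and $\Phi$ is again an isomorphism.
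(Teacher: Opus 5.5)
Your proposal is correct and follows essentially the paper's argument. The paper's inverse map $\Psi$ is your $\Psi$ followed by $\iota^{-1}$, with the $\TL_n$-action on $\mM$ transported through $\iota$. Its claim $\Psi\circ\Phi=\id$ is exactly your identity $\Psi\circ\Phi=\iota$, which follows from the cyclicity of $1\otimes 1$. Your packaging is slightly cleaner: it avoids transporting the action to $\mM$, and it checks points the paper leaves implicit, namely that the pullback of $\CC_\mathrm{triv}$ is the Hecke trivial module, that $e_i v=0$ for $i\neq k$, and the specialization at fixed $q$.
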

\begin{proof}
Let $\iota$ denote the embedding described in Proposition~\ref{fkk}. Let $\iota^{-1}\colon \im(\iota) \rightarrow \mM$ be the inverse of $\iota$. 

First, we construct a surjective map \[\Phi\colon \heckeinduced \twoheadrightarrow \TLinduced, \] which can be done by applying Lemma~\ref{indlemma} with the surjection $\phi_q\colon \mathcal H_n \twoheadrightarrow \TL_n$. 

We construct another surjective map \[\Psi\colon \TLinduced \twoheadrightarrow \heckeinduced.\] Due to the universal property of induced representations and Lemma~\ref{factors}, such a map is determined by a map in $\hom_{\TL_k \otimes \TL_{n-k}}(\CC_\mathrm{triv}, \mM)$, where the action of $\TL_k \otimes \TL_{n-k}$ on $\mM$ is due to $\iota$. We can verify that one such map is \[\psi\colon 1 \mapsto \iota^{-1}(\underbrace{v_- \otimes \dots \otimes v_-}_k \otimes \underbrace{v_+ \otimes \dots \otimes v_+}_{n-k}).\] Then, \[\Psi\colon g \otimes 1 \mapsto g\cdot \psi(1)\] for $g \in \TL_n$. This map is a surjection as $(e_i +q^{-1}) \otimes 1 \mapsto (e_i +q^{-1}) \cdot \psi(1) = H_{s_i} \cdot \psi(1)$ for all $1 \leq i \leq n$, which generates $\mM$. 

Note that both $\Phi$ and $\Psi$ are homomorphisms of $\mathcal H_n$-modules. Furthermore, in both representations, the vectors $1 \otimes 1$ are both cyclic. Both $\Phi$ and $\Psi$ send identity to identity, so $\Phi \circ \Psi = \Psi \circ \Phi = \id$. 
\end{proof}

We need the following result to explicitly identify the bases of the induced modules given above. 

\begin{prop}[Fan-Green \cite{fg}] \label{fg}
The surjection $\phi_q\colon \mathcal H \rightarrow \TL_n$ by $H_{s_i} \mapsto e_i + q^{-1}$ maps a canonical basis element to either an element of the diagrammatic basis or $0$. 
\end{prop}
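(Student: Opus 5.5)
The plan is to characterize both sides by a Lusztig-type uniqueness statement inside $\TL_n$, and to show separately that the canonical basis elements not accounted for by diagrams lie in $\ker\phi_q$.

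\textbf{Step 1: bar involution on $\TL_n$.} Define $\overline{\,\cdot\,}$ on $\TL_n$ as the ring involution with $\bar q=q^{-1}$ and $\bar e_i=e_i$. This respects the defining relations, since $\beta=-q-q^{-1}$ is bar-invariant. Then check that $\phi_q$ intertwines the two bar involutions. It suffices to do this on generators: using $H_s^{-1}=H_s+q-q^{-1}$, we get
\[
\phi_q(\bar H_{s_i})=e_i+q^{-1}+q-q^{-1}=e_i+q=\overline{e_i+q^{-1}}=\overline{\phi_q(H_{s_i})}.
\]
Every diagram is a product of the $e_i$, so every element of the diagrammatic basis is bar-invariant. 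In addition, each image $\phi_q(B_x)$ is bar-invariant.

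\textbf{Step 2: identify the kernel.} The relation $e_ie_{i+1}e_i=e_i$, together with the quadratic relation, shows that $\ker\phi_q$ is the two-sided ideal generated by the canonical basis element $B_{s_is_{i+1}s_i}$. This element is an explicit cubic polynomial in $H_{s_i}$ and $H_{s_{i+1}}$, and its image is $e_ie_{i+1}e_i-e_i=0$ up to normalization.

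I would then use the Kazhdan--Lusztig cell theory. A two-sided ideal generated by a canonical basis element is spanned by the $B_x$ with $x\le_{LR} s_is_{i+1}s_i$. Moreover, the set of such $x$ is exactly the set of non-fully-commutative permutations, i.e.\ those containing the pattern $321$. This step is cited and checked rather than reproved. Its consequences are:
\begin{itemize}
\item $\phi_q(B_x)=0$ for every non-fully-commutative $x$;
\item the images $\phi_q(B_w)$, for $w$ fully commutative, form a basis of $\TL_n$.
\end{itemize}
The count is consistent: both sets have Catalan-number size, matching the bijections of Section~\ref{tlcombo}.

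\textbf{Step 3: triangularity and uniqueness.} For each fully commutative $w$, set $t_w=\phi_q(H_w)$. Each $w$ has a reduced word $s_{i_1}\cdots s_{i_r}$, unique up to commutations, so it determines a diagram $D_w=e_{i_1}\cdots e_{i_r}$. The elements $t_w$ form a standard basis of $\TL_n$, and $\phi_q(B_w)\in t_w+\sum_{y\prec w}q^{-1}\ZZ[q^{-1}]\,t_y$, where $y$ ranges over fully commutative elements.

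It therefore suffices to prove the analogous triangularity for diagrams:
\[
D_w\in t_w+\sum_{y\prec w}q^{-1}\ZZ[q^{-1}]\,t_y .
\]
Given this, both $\{D_w\}$ and $\{\phi_q(B_w)\}$ are bar-invariant and unitriangular with off-diagonal coefficients in $q^{-1}\ZZ[q^{-1}]$. The standard uniqueness argument, as in Lemma~\ref{heckeselfdual}, then gives $\phi_q(B_w)=D_w$.

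\textbf{Main obstacle.} The hard part is the displayed triangularity. Inverting $e_i=\phi_q(H_{s_i})-q^{-1}$ and expanding
\[
D_w=\prod_j\bigl(\phi_q(H_{s_{i_j}})-q^{-1}\bigr)
\]
gives a sum over subwords of $(-q^{-1})^{\#\text{omitted}}$ times a product of $\phi_q(H_s)$'s. The difficulty is that a subword need not be reduced, or may stop being fully commutative, and then re-expanding it in the $t_y$ could introduce nonnegative powers of $q$.

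I would first try an induction on $\ell(w)$. Write $w=s w'$ with $\ell(w)=\ell(w')+1$, so that $D_w=e_sD_{w'}$. By induction $D_{w'}$ is already triangular, so it remains to compute $e_s\,t_y$ for each $y$ that appears. There are two cases:
\begin{itemize}
\item If $\ell(sy)>\ell(y)$ and $sy$ is fully commutative, then $e_s t_y=t_{sy}-q^{-1}t_y$.
\item Otherwise the product is governed by the quadratic relation and by the vanishing of the $321$-containing images from Step~2.
\end{itemize}
The aim is to show that the only coefficients ever produced lie in $q^{-1}\ZZ[q^{-1}]$, with the correct leading term. If this bookkeeping becomes unwieldy, the fallback is to verify triangularity directly on the explicit action on link states in $(\CC^2)^{\otimes n}$ given by Lemma~\ref{spinreplem}.
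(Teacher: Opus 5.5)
The paper does not prove this proposition; it quotes it from Fan--Green. So your proposal has to stand on its own, and as written it has a genuine gap exactly at the step you call the main obstacle.

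Granting Step~2, the remaining content of the proposition is the triangularity
\[
D_w\in t_w+\sum_{y\prec w}q^{-1}\ZZ[q^{-1}]\,t_y \qquad (w \text{ fully commutative}),
\]
and you do not prove it. Your induction on $D_w=e_sD_{w'}$ does not close. When $sy\prec y$ you get $e_st_y=t_st_y-q^{-1}t_y=t_{sy}-q\,t_y$. Multiplying a coefficient in $q^{-1}\ZZ[q^{-1}]$ by $-q$ can produce a constant term, so the inductive hypothesis is not preserved. When $sy$ is not fully commutative, $t_{sy}$ must also be re-expanded. Your second bullet only says these contributions are ``governed by'' the quadratic relation and Step~2; it gives no mechanism ensuring that no nonnegative powers of $q$ survive.

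What is missing is either a sharper inductive invariant or a genuine combinatorial fact about reduced fully commutative words. One possible route: inverting a unitriangular matrix, the display is equivalent to $t_w\in D_w+\sum_{y\prec w}q^{-1}\ZZ[q^{-1}]D_y$. Expand $t_w=\prod_j(e_{i_j}+q^{-1})$ over subwords. A subword with $j$ omitted letters contributes $q^{-j}\beta^{k}D_y$, where $k$ is the number of closed loops formed. So it would suffice to show that deleting $j\ge 1$ letters from a reduced fully commutative word creates at most $j-1$ loops. The proposal contains nothing of this kind.

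Three further points need repair.

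\begin{enumerate}
\item In Step~2, it is not a general fact that the two-sided ideal generated by $B_x$ equals $\mathrm{span}\{B_y : y\le_{LR}x\}$; only the inclusion is formal. What works is the following. By type~A cell theory, the span $I$ of the $B_y$ with $y$ containing $321$ is a two-sided ideal. It contains the generators $B_{s_is_{i+1}s_i}$ of $\ker\phi_q$, so $\ker\phi_q\subseteq I$. Equality then follows from $\dim\ker\phi_q=n!-C_n=\dim I$. The count you mention as a consistency check is actually the proof. Note also that this cited input already contains the ``or $0$'' half of the statement.

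\item In Step~3, the claimed triangularity of $\phi_q(B_w)$ involving only fully commutative $y$ is not automatic, because $321$-avoiding permutations do not form a Bruhat lower set. For example, $s_2s_1s_2\prec s_2s_1s_3s_2$, the latter is fully commutative, and $B_{s_2s_1s_3s_2}$ has a nonzero $H_{s_2s_1s_2}$-coefficient. The claim does follow from Step~2 by induction on $y$: for $y$ not fully commutative, $\phi_q(H_y)=-\sum_{z\prec y}p_{z,y}\phi_q(H_z)$, which lies in $\sum_{z\prec y}q^{-1}\ZZ[q^{-1}]t_z$. You should state this.

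\item The bar involution only makes sense on $\TL_n$ over $\ZZ[q,q^{-1}]$, not on the complex algebra used in the paper, so you should work generically and specialize at the end.
\end{enumerate}
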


Now, we have the following. Let $W_J$ denote $S_k \times S_{n-k}$ and let $W^J$ be the set of minimal-length elements of each $W_J$-coset. 
\begin{lem} \label{cantocan}
For $w \in W^J$, the surjection $\phi\colon \mathcal H \twoheadrightarrow \Ind_{\mathcal H_k \otimes \mathcal H_{n-k}}^{\mathcal H_n} \CC_\mathrm{triv}$ by $H \mapsto H \otimes 1$ sends $B_w$ to the canonical basis element $\underline M_w$. 
\end{lem}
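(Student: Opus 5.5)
The plan is to verify the two defining properties of $\underline M_w$ for the image $\phi(B_w) = B_w \otimes 1$: self-duality, and the triangularity condition $\phi(B_w) \in M_w + \sum_y q^{-1}\ZZ[q^{-1}] M_y$. By the uniqueness part of the theorem of Deodhar, these two properties force $\phi(B_w) = \underline M_w$.

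\emph{Self-duality.} The bar involution on $\mM$ is defined by $\overline{h \otimes 1} = \bar h \otimes 1$, so $\phi$ intertwines the bar involutions. Since $\overline{B_w} = B_w$ by Lemma~\ref{heckeselfdual}, we get $\overline{\phi(B_w)} = \phi(B_w)$. One should note in passing why the involution on $\mM$ is well defined: $\CC_\mathrm{triv}$ sends $H_s \mapsto q^{-1}$, and $\overline{H_s} = H_s^{-1}$ acts by $q = \overline{q^{-1}}$, so the relations are preserved.

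\emph{Triangularity.} Write $B_w = \sum_{y \preceq w} p_{y,w} H_y$ as in Definition~\ref{klpoly}. Each $y \in W$ factors uniquely as $y = y'x$ with $y' \in W^J$ and $x \in W_J$, and $\ell(y) = \ell(y') + \ell(x)$. Here I take $W^J$ to consist of minimal representatives of the cosets $yW_J$ on the right, which is what is needed for $H_y \otimes 1$ to make sense with the tensor over $\mH_J$ on the right. Then $H_y = H_{y'}H_x$, and $H_x$ acts on $1 \in \CC_\mathrm{triv}$ by $q^{-\ell(x)}$. Hence
\[
\phi(H_y) = q^{-\ell(x)} M_{y'}.
\]
For $y = w$ we have $y' = w$ and $x = 1$, so $\phi(H_w) = M_w$. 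For $y \neq w$ there are two cases. If $y' \neq w$, the coefficient $p_{y,w}q^{-\ell(x)}$ lies in $q^{-1}\ZZ[q^{-1}]$ and contributes to $M_{y'}$. If $y' = w$ but $x \neq 1$, then $y = wx$ has length greater than $\ell(w)$, which contradicts $y \preceq w$. So these terms do not occur. Collecting terms gives $\phi(B_w) \in M_w + \sum_{y' \neq w} q^{-1}\ZZ[q^{-1}] M_{y'}$. Since $q^{-1}\ZZ[q^{-1}]$ is closed under addition, the combined coefficients remain in it.

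\emph{Main obstacle.} The only delicate point is the bookkeeping of left versus right cosets. The paper states $W^J$ via left cosets, yet the induced module $\mH \otimes_{\mH_J}$ requires factoring as $y = y'x$ with $x \in W_J$ on the right. I would fix this convention first, because the argument depends on it. After that, I would invoke the uniqueness in Deodhar's theorem. Uniqueness requires only the weak condition (coefficients in $q^{-1}\ZZ[q^{-1}]$ for all $y \ne w$), so no Bruhat comparison on $W^J$ is needed beyond the length argument above.
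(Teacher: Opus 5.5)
Your proposal is correct and is essentially the paper's own argument: expand $B_w$ in the standard basis, send each $H_y$ to a nonnegative power of $q^{-1}$ times $M_{y'}$ (with $y'$ the minimal coset representative), and conclude from self-duality together with the uniqueness in Deodhar's theorem. The differences are minor. You exclude terms with $y'=w$ by a length count, where the paper uses the Bruhat comparison $y' \preceq y \prec w$. Your scalar $q^{-\ell(x)}$ is correct for the trivial module, whereas the paper writes $(-q^{-1})^a$, and your check of well-definedness and of the left/right coset convention is more careful than the paper's.
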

\begin{proof}
The canonical basis element $B_w$ is in $H_w + \sum_{y\prec w} q^{-1} \ZZ[q^{-1}]H_y$. The element $H_w$ maps to $H_w \otimes 1 = M_w$. For each $H_y$ where $y \prec w$, $\phi$ sends it to an element of the form $(-q^{-1})^a M_x$, where $a$ is a nonnegative integer and $x$ is the element in the $W_J$-coset of $y$ with minimal length, so $x \preceq y$. Therefore, $\phi(B_w) \in M_w + \sum_{y \prec w}q^{-1}\ZZ[q^{-1}]M_y$, and since $\phi(B_w)$ is also self-dual, it is equal to $\underline M_x$ by uniqueness. 
\end{proof}
\begin{lem} \label{basisidentification}
The isomorphism given by Lemma~\ref{indlem} identifies the canonical basis of $\mM$ with the diagrammatic basis of $\TLinduced$. 
\end{lem}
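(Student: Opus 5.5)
Combining the lemmas just proved gives a commutative square. Let $\pi_{\mH}\colon \mH_n \twoheadrightarrow \mM$ be the map $H \mapsto H \otimes 1$ from Lemma~\ref{cantocan}. Let $\pi_{\TL}\colon \TL_n \twoheadrightarrow \TLinduced$ be the map $g \mapsto g \otimes 1$. Let $\phi_q\colon \mH_n \twoheadrightarrow \TL_n$ be the Fan--Green surjection. By construction of $\Phi$ in the proof of Lemma~\ref{indlem} (via Lemma~\ref{indlemma}), we have $\Phi \circ \pi_{\mH} = \pi_{\TL}\circ \phi_q$, because $\Phi(H\otimes 1)=\phi_q(H)\otimes 1$. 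Since $\Phi$ is the isomorphism of Lemma~\ref{indlem}, it suffices to track where $B_w$ goes along both sides of the square for $w\in W^J$.

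\textbf{Step 1.} By Lemma~\ref{cantocan}, $\pi_{\mH}(B_w)=\underline M_w$ for $w\in W^J$. Hence $\Phi(\underline M_w)=\pi_{\TL}(\phi_q(B_w))$.

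\textbf{Step 2.} By Proposition~\ref{fg}, $\phi_q(B_w)$ is either a diagram $D_w\in\ell_n$ or $0$. Since $\Phi$ is an isomorphism and $\underline M_w\neq 0$, the value $0$ is excluded. So $\Phi(\underline M_w)=D_w\otimes 1$.

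\textbf{Step 3.} Identify $D_w\otimes 1$ with a diagrammatic basis element of $\TLinduced$. I interpret the diagrammatic basis of the induced module as follows. Modulo the relations $e_i\otimes 1 = (H_{s_i}-q^{-1})\otimes 1 = 0$ for $i\neq k$, we have $D\otimes 1=0$ whenever $D$ has a simple cup on the bottom at position $i\neq k$. Otherwise $D\otimes 1$ is determined by the top half of $D$ together with the connectivity data at the bottom, which gives a link-state-type diagram (Lemma~\ref{cupparenthesis}). The diagrammatic basis is the set of nonzero classes obtained this way, and I take it as known that these distinct classes form a basis. Then $D_w\otimes 1$ is such a basis element, since it is nonzero.

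\textbf{Step 4.} Show that $w\mapsto D_w\otimes 1$ is a bijection onto the diagrammatic basis. Injectivity is automatic, since $\Phi$ is an isomorphism and the $\underline M_w$ are distinct basis vectors. Surjectivity follows from a dimension count. The number of classes $|W^J|=\binom nk$ equals $\dim\mM$, and $\dim\mM=\dim \TLinduced$, which is the size of the diagrammatic basis. An injective map between finite sets of equal size is a bijection.

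The main obstacle is Step 3. It requires pinning down precisely what the diagrammatic basis of the induced module is and checking that distinct nonzero classes $D\otimes 1$ are linearly independent and number exactly $\binom nk$. I would handle this by showing that these classes span, which follows from the relations, and then using $\dim \TLinduced=\binom nk$, which comes from Lemma~\ref{indlem} and Proposition~\ref{fkk}.
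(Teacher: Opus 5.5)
This is correct and is the paper's argument: Lemma~\ref{cantocan} identifies $\underline M_w$ with $B_w\otimes 1$, $\Phi$ sends it to $\phi_q(B_w)\otimes 1$, Proposition~\ref{fg} makes $\phi_q(B_w)$ a diagram or $0$, and injectivity of $\Phi$ rules out $0$. Your Steps 3--4 (the vanishing criterion via bottom simple links at positions $i\neq k$, and the bijectivity count) make explicit what the paper leaves implicit. The obstacle you flag is milder than you expect: the kernel of $g\mapsto g\otimes 1$ is $\sum_{i\neq k}\TL_n e_i$, which is exactly the span of the diagrams having a simple bottom link at some $i\neq k$, so the remaining diagrams form a basis of $\TLinduced$ without any counting.
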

\begin{proof}
The dual canonical basis of $\mM$ consists of elements of the form $B_w \otimes 1$, where $B_w$ is a canonical basis element of $\mathcal H$, by Lemma~\ref{cantocan} combined with the fact that the canonical basis elements of the spherical module are $\underline M_w$ for $w \in W^J$. The bijection $\Phi\colon \mM \twoheadrightarrow \TLinduced$ described in Lemma~\ref{indlem} maps $B_w \otimes 1$ to $\phi_q(B_w) \otimes 1$, where $\phi_q(B_w)$ is a diagram in $\TL_n$ by Proposition~\ref{fg}. Each $\phi_q(B_w) \otimes 1$ is zero if $\phi_q(B_w) \in \TL_k \otimes \TL_{n-k}$ and a diagrammatic basis element of $\TLinduced$ otherwise. However, due to the fact that $\Phi$ is a bijection, $\phi_q(B_w) \otimes 1$ cannot be zero, so it is an element of the diagrammatic basis. Therefore, we have the identification as described in the lemma. 
\end{proof}

\begin{thm} 
There is an embedding \[ \TLinduced \hookrightarrow (\CC^2)^{\otimes n}\] that identifies the diagrammatic basis to a subset of the dual canonical basis of $(\CC^2)^{\otimes n}$ lying in $(\CC^2)^{\otimes n}_k$. 
\end{thm}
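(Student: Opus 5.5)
The plan is to compose the isomorphism of Lemma~\ref{indlem} with the embedding of Proposition~\ref{fkk}. Concretely, let $\Psi\colon \TLinduced \to \mM$ be the isomorphism constructed in the proof of Lemma~\ref{indlem}, and let $\iota\colon \mM \hookrightarrow (\CC^2)^{\otimes n}$ be the embedding of $\mH$-modules from Proposition~\ref{fkk}. We then define
\[
\Theta = \iota\circ\Psi\colon \TLinduced \hookrightarrow (\CC^2)^{\otimes n}.
\]
It is injective because it is the composite of an isomorphism and an injection. It is a map of $\mH_n$-modules, hence of $\TL_n$-modules via Lemma~\ref{factors}, since $\phi_q$ is surjective. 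Explicitly, $\Theta(g\otimes 1) = g\cdot(v_-^{\otimes k}\otimes v_+^{\otimes (n-k)})$ for $g\in\TL_n$.

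Next we track the bases. By Lemma~\ref{basisidentification}, the isomorphism $\Phi=\Psi^{-1}$ carries the canonical basis $\{\underline M_w\}_{w\in W^J}$ of $\mM$ bijectively onto the diagrammatic basis of $\TLinduced$. Hence $\Psi$ carries each diagrammatic basis element to some $\underline M_w$. By Proposition~\ref{fkk}, $\iota$ sends each $\underline M_w$ to a dual canonical basis element lying in $(\CC^2)^{\otimes n}_k$. Composing the two statements, $\Theta$ sends each diagrammatic basis element to a dual canonical basis element in $(\CC^2)^{\otimes n}_k$. Since $\Theta$ is injective, distinct diagrams go to distinct elements, which gives the claimed identification with a subset.

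The main care point is compatibility. The identification in Lemma~\ref{basisidentification} is phrased through $\Phi$, so we must confirm that $\Psi$ and $\Phi$ are mutually inverse. This holds because both send the cyclic vector $1\otimes 1$ to $1\otimes 1$ (this is the key step from the proof of Lemma~\ref{indlem}). Given that, $\Psi(\phi_q(B_w)\otimes 1) = B_w\otimes 1 = \underline M_w$ by Lemma~\ref{cantocan}.

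We should also note that $\iota$ lands inside $(\CC^2)^{\otimes n}_k$. This follows because $v_-^{\otimes k}\otimes v_+^{\otimes(n-k)}$ lies in that weight space, and the $H_{s_i}$ preserve it, since $\check R_{11}$ preserves the number of $v_-$ factors. Finally, for the surjectivity of the diagrammatic basis onto the image we need the counts to match: the diagrammatic basis of $\TLinduced$ is indexed by $(n,\min(k,n-k))$-link states, and this count must agree with $|W^J|$ restricted appropriately. The match, however, is already forced by the isomorphism with $\mM$ together with Lemma~\ref{basisidentification}, so no separate count is needed.
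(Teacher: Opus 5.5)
Your proposal is correct and follows the paper's proof: your $\Theta=\iota\circ\Psi$ is exactly the map $b$ that the paper defines as the unique arrow with $b\circ\Phi=\iota$, and you track the bases the same way, through Lemma~\ref{basisidentification} and Proposition~\ref{fkk}. One small slip, in an aside that your argument does not rely on: the diagrammatic basis of $\TLinduced$ is indexed by $(n,p)$-link states for all $0\le p\le\min(k,n-k)$, not only by $(n,\min(k,n-k))$-link states (for each $p$, the bottom is forced to be $p$ cups nested around the gap between positions $k$ and $k+1$), giving $\binom{n}{k}$ elements in total, as required.
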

\begin{proof}
Combining Lemma~\ref{indlem} and Lemma~\ref{basisidentification} with Proposition~\ref{fkk}, we obtain an embedding $b\colon \TLinduced \hookrightarrow (\CC^2)^{\otimes n}$, shown in the following commutative diagram. 

\[\begin{tikzcd}
	{\mM} & {\mathrm{Ind}_{\mathrm{TL}_k \otimes \mathrm{TL}_{n-k}}^{\mathrm{TL}_n} \mathbb C_\mathrm{triv}} \\
	& {(\mathbb C^2)^{\otimes n}}
	\arrow["\sim", from=1-1, to=1-2]
	\arrow["\iota"', from=1-1, to=2-2]
	\arrow["b", from=1-2, to=2-2]
\end{tikzcd}\]
As in Proposition~\ref{fkk}, $\iota$ is an injection sending canonical basis elements of $\mM$ to dual canonical basis elements in $(\mathbb C^2)^{\otimes n}$. Since the isomorphism described in Lemma~\ref{indlem} identifies dual canonical basis elements to diagrammatic basis elements by Lemma~\ref{basisidentification}, as $b$ is the unique map making the diagram commutative, $b$ must be an injection mapping diagrammatic basis elements to dual canonical basis elements, proving the claim. 
\end{proof}
\begin{lem} \label{tlactionlem}
The action of $\TL_n$ on $(\CC^2)^{\otimes n}$ either sends $v_- \otimes \dots \otimes v_- \otimes v_+ \otimes \dots \otimes v_+$, with $v_-$ appearing $k$ times and $v_+$ appearing $n-k$ times, to $0$ or to a vector in $(\CC^2)^{\otimes n}_k$. 
\end{lem}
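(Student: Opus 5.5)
The plan is to show that each generator $e_i$ preserves the weight space $(\CC^2)^{\otimes n}_k$. The statement then follows for every element of $\TL_n$, since each element is a linear combination of words in $e_1, \dots, e_{n-1}$. The main input is Lemma~\ref{spinreplem}, which says that $e_i$ acts on the $i$th and $(i+1)$th tensor factors by the explicit $4\times 4$ matrix given there and as the identity on all other factors.

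First I will read off from this matrix how $e_i$ acts on the two relevant factors:
\begin{itemize}
\item $v_+\otimes v_+ \mapsto 0$ and $v_-\otimes v_- \mapsto 0$;
\item $v_+\otimes v_-$ and $v_-\otimes v_+$ are sent to linear combinations of $v_+\otimes v_-$ and $v_-\otimes v_+$ only.
\end{itemize}
In both cases the number of $v_-$ factors is preserved: either the result is $0$, or it is a combination of vectors whose $i$th and $(i+1)$th factors contain exactly one $v_-$, the same as before. Because the other factors are untouched, $e_i$ maps every standard basis vector with exactly $k$ factors equal to $v_-$ into $(\CC^2)^{\otimes n}_k$ (possibly to $0$). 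By linearity, $e_i$ preserves $(\CC^2)^{\otimes n}_k$.

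Second, I will argue by induction on word length that every word in the $e_i$ preserves $(\CC^2)^{\otimes n}_k$. The identity trivially does, so every element of $\TL_n$ maps $(\CC^2)^{\otimes n}_k$ into itself. Applying this to the vector $v_-\otimes\dots\otimes v_-\otimes v_+\otimes\dots\otimes v_+$, which lies in $(\CC^2)^{\otimes n}_k$, gives an image that is either $0$ or a nonzero vector of $(\CC^2)^{\otimes n}_k$. This is exactly the dichotomy claimed.

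As a cross-check, the same conclusion follows from the Temperley-Lieb category picture: $\eps$ kills $v_\pm\otimes v_\pm$ and is nonzero only on pairs with one $v_-$, while $\delta$ outputs only vectors with one $v_-$ per pair. So each factorization $\delta_i^n\circ\eps_i^n$ removes one $v_-$ and then restores one.

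There is no real obstacle here. The only care needed is to note that the matrix in Lemma~\ref{spinreplem} is block diagonal with respect to the number of $v_-$ factors, which can be checked directly from its entries.
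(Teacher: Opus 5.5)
Your argument is correct, but it takes a different route from the paper.

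You work with the algebraic presentation. Since $\TL_n$ is spanned by words in $e_1,\dots,e_{n-1}$, and each $\zeta(e_i)$ from Lemma~\ref{spinreplem} is block diagonal with respect to the number of $v_-$ factors, every element of $\TL_n$ maps all of $(\CC^2)^{\otimes n}_k$ into itself. This is slightly stronger than what the lemma asks, which concerns only the one vector.

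The paper instead argues diagrammatically in the Temperley--Lieb category. It slices an arbitrary diagram into pieces $\id$, $\eps$, $\delta$ and observes three things: $\eps$ kills $v_\pm\otimes v_\pm$ and otherwise removes one $v_-$ and one $v_+$; $\delta$ adds one of each; and an $(n,n)$-diagram involves equally many $\eps$'s and $\delta$'s. This is essentially your ``cross-check'', applied to a whole diagram rather than only to $e_i=\delta_i^n\circ\eps_i^n$.

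Your version reduces everything to inspecting one $4\times 4$ matrix. It avoids decomposing a general diagram and tracking the count through the intermediate spaces $(\CC^2)^{\otimes m}$. The paper's version applies verbatim to any morphism in $\hom([m],[n])$. It therefore shows that the difference between the numbers of $v_+$ and $v_-$ factors is preserved by the entire categorical action, and it matches the slicing picture used later in Theorem~\ref{diagramdualcomp}.
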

\begin{proof}
We show that the action of $\id$, $\eps$, and $\delta$ (corresponding to the three possible features) all satisfy the lemma. The action of $\id$ keeps the number of $v_-$ and $v_+$ constant. The number of actions of $\eps$ and $\delta$ must be equal since there are an equal number of quasi-simple links on the top and the bottom. The action of $\eps$ sends $v_+ \otimes v_+$ and $v_- \otimes v_-$ to $0$, so in this case, the action of the entire diagram containing this feature is $0$. If $\eps$ acts on $v_- \otimes v_+$ or $v_+ \otimes v_-$, it removes one $v_-$ and one $v_+$, and $\delta$ adds one $v_-$ and one $v_+$ in each element of the sum, so after all the actions, the number of $v_-$ and $v_+$ will remain constant. 
\end{proof}

\begin{prop}\label{tlactionprop}
The following $\TL_n$-modules are isomorphic:
\[\bigoplus_{0 \leq k \leq n} \Ind_{\TL_k \otimes \TL_{n-k}}^{\TL_n}\CC_\mathrm{triv} \cong (\CC^2)^{\otimes n}\]
and the isomorphism identifies the diagrammatic basis of the left hand side to the dual canonical basis of the right hand side. 
\end{prop}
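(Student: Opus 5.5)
The plan is to assemble the isomorphism summand by summand from the preceding theorem and then check that the pieces fit together. For each $0 \leq k \leq n$, the preceding theorem gives an embedding $b_k\colon \TLinduced \hookrightarrow (\CC^2)^{\otimes n}$. It sends the diagrammatic basis injectively into the dual canonical basis elements lying in $(\CC^2)^{\otimes n}_k$. By construction, $b_k$ sends $1 \otimes 1$ to $v_-^{\otimes k} \otimes v_+^{\otimes n-k}$ and is $\TL_n$-equivariant. Lemma~\ref{tlactionlem} then confirms that its image lies in $(\CC^2)^{\otimes n}_k$. We define $b = \bigoplus_k b_k$, which is a homomorphism of $\TL_n$-modules.

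Since $(\CC^2)^{\otimes n} = \bigoplus_k (\CC^2)^{\otimes n}_k$ is a direct sum decomposition (the weight decomposition), the images of the $b_k$ lie in independent subspaces. Hence $b$ is injective, because each $b_k$ is. The diagrammatic basis of the left-hand side is the disjoint union of the diagrammatic bases of the summands. It therefore maps injectively into the dual canonical basis, and distinct $k$ give distinct elements because they lie in distinct weight spaces.

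It remains to show that $b$ is surjective, or equivalently that each $b_k$ hits every dual canonical basis element in $(\CC^2)^{\otimes n}_k$. I would do this by counting dimensions. On the right, $\dim (\CC^2)^{\otimes n}_k = \binom{n}{k}$. On the left, Lemma~\ref{indlem} gives $\TLinduced \cong \mM$, and $\mM$ has basis $\{M_x : x \in W^J\}$ with $|W^J| = |S_n/(S_k\times S_{n-k})| = \binom nk$. So $b_k$ is an injection between spaces of equal dimension $\binom{n}{k}$, hence a bijection onto $(\CC^2)^{\otimes n}_k$. Its image, the diagrammatic basis, is then a set of $\binom nk$ distinct dual canonical basis elements inside $(\CC^2)^{\otimes n}_k$.

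The main obstacle I expect is confirming that the dual canonical basis elements indexed by sequences with $k$ minus signs all lie in $(\CC^2)^{\otimes n}_k$, so that there are exactly $\binom nk$ of them there. This follows from condition (iii) of the definition: taking $n'=n$ forces $\sum l_i = \sum k_i$, so every term has the same number of $v_-$'s. With that in hand, the $\binom nk$ basis elements coming from $b_k$ exhaust this part of the dual canonical basis. Summing over $k$ then shows that $b$ is an isomorphism identifying the diagrammatic basis with the full dual canonical basis.
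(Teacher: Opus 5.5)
Your proof is correct and follows the paper's route. The paper also takes the embeddings $b_k$ from the preceding theorem, uses Lemma~\ref{tlactionlem} together with the definition of the dual canonical basis to place the image of the $k$-th summand inside $(\CC^2)^{\otimes n}_k$, and then takes the union over $0 \leq k \leq n$. Your dimension count $\dim \TLinduced = |W^J| = \binom{n}{k} = \dim (\CC^2)^{\otimes n}_k$ spells out the step where the paper simply asserts that the image is \emph{precisely} the weight-$k$ part of the dual canonical basis.
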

\begin{proof}
Due to Lemma~\ref{tlactionlem} and by definition of the dual canonical basis, the subset of the dual canonical basis described in Proposition~\ref{tlactionprop} is precisely the elements $v_{k_n}\heart\dots\heart v_{k_1}$ that are in $(\CC^2)^{\otimes n}_k$. The collection of these for all $0 \leq k \leq n$ is precisely the dual canonical basis. 
\end{proof}
\subsection{Explicit Bijection Between Diagrammatic Basis and Dual Canonical Basis}\label{diagrambij}
Given a label of the dual canonical basis $v_{k_n} \heart \dots \heart v_{k_1}$ in $(\CC^2)^{\otimes n}_k$, we construct an unique diagram $D$ such that the bijection described in Proposition~\ref{tlactionprop} identifies $D$ with $v_{k_n} \heart \dots \heart v_{k_1}$. 

First, we draw the quasi-simple links on the top line. Label the vertices from left to right by $v_{k_n}$, $v_{k_{n-1}}$, $\dots$, $v_{k_1}$. We construct these quasi-simple links by the following procedure, an example of which is shown in Figure~\ref{fig:linkstatecon}:
\begin{enumerate}
\item Consider the least $i$ such that $k_i = +$ that we have not considered yet. 
\begin{enumerate}
\item[$\bullet$] If there is no $j < i$ such that $k_j = -$, move on to step (2). 
\item[$\bullet$] If $j$ is the greatest $j < i$ such that $k_j = -$, draw a quasi-simple link between $v_{k_i}$ and $v_{k_j}$ and move on to step (2). 
\end{enumerate}
\item Repeat step (1) until we have considered all $v_+$. 
\end{enumerate}

\begin{figure}[!ht]
\centering
\tikzset{every picture/.style={line width=0.75pt}} 

\begin{tikzpicture}[scale=.25]
\draw (0,9) -- (18,9);
\draw (13,9) arc (180:360:2);
\node at (1,9){$\bullet$};
    \node at (5,9){$\bullet$};
    \node at (9,9){$\bullet$};
    \node at (13,9){$\bullet$};
        \node at (17,9){$\bullet$};
\node at (1, 10) {$v_+$};
\node at (5, 10) {$v_+$};
\node at (9, 10) {$v_-$};
\node at (13, 10) {$v_+$};
\node at (17, 10) {$v_-$};
\end{tikzpicture}
\hspace{5mm}
\begin{tikzpicture}[scale=.25]
\node at (0, 0) {$\rightarrow$};
\end{tikzpicture}
\hspace{5mm}
\begin{tikzpicture}[scale=.25]
\node at (1,9){$\bullet$};
    \node at (5,9){$\bullet$};
    \node at (9,9){$\bullet$};
    \node at (13,9){$\bullet$};
        \node at (17,9){$\bullet$};
\draw (0,9) -- (18,9);
\draw (5,9) arc (180:360:2);
\draw (13,9) arc (180:360:2);
\node at (1, 10) {$v_+$};
\node at (5, 10) {$v_+$};
\node at (9, 10) {$v_-$};
\node at (13, 10) {$v_+$};
\node at (17, 10) {$v_-$};
\end{tikzpicture}
\caption{Construction of the link state of $v_+ \heart v_+ \heart v_- \heart v_+ \heart v_-$. }
\label{fig:linkstatecon}
\end{figure}

We claim that this procedure gives a valid cup diagram of the top line. First of all, we claim that there are no unlinked $v_{k_m}$ such that $v_{k_i}$ is linked to $v_{k_j}$ and $j < m < i$. Since $j < i$, we have $k_i = +$ and $k_j = -$. If $k_m = +$, $v_{k_m}$ would have been linked with $v_{k_j}$; if $k_m = -$, then $v_{k_i}$ would have been linked with $v_{k_m}$. Furthermore, there are no intersecting links as all of the quasi-simple links we constructed connect the nearest unlinked vertices. By the bijection of cup diagrams and parenthesis diagrams given in Lemma~\ref{cupparenthesis}, this is sufficient. 

\begin{figure}[!ht]
\centering
\tikzset{every picture/.style={line width=0.75pt}} 
\begin{tikzpicture}[scale=.25]
\node at (1,9){$\bullet$};
    \node at (5,9){$\bullet$};
    \node at (9,9){$\bullet$};
    \node at (13,9){$\bullet$};
        \node at (17,9){$\bullet$};
\node at (1,0){$\bullet$};
    \node at (5,0){$\bullet$};
    \node at (9,0){$\bullet$};
    \node at (13,0){$\bullet$};
        \node at (17,0){$\bullet$};
\draw (0,0) -- (18,0);
\draw (0,9) -- (18,9);
\draw (5,0) arc (180:0:2);
\draw (5,9) arc (180:360:2);
\draw (13,9) arc (180:360:2);
\begin{scope}
\clip (0,0) rectangle (18,9);
\draw (7,0) ellipse (6 and 4);
\end{scope}
\node at (1, 10) {$v_+$};
\node at (5, 10) {$v_+$};
\node at (9, 10) {$v_-$};
\node at (13, 10) {$v_+$};
\node at (17, 10) {$v_-$};
\end{tikzpicture}
\caption{Construction of the next step for the diagram corresponding to $v_+ \heart v_+ \heart v_- \heart v_+ \heart v_-$. }
\label{fig:nextstep}
\end{figure}

After this step, let the number of quasi-simple links on the top line be $p$. On the bottom line, we draw the quasi-simple link connecting the $(k-i)$th vertex to the $(k+1+i)$th for all $0 \leq i \leq p-1$, and so on, drawing $p$ quasi-simple links total. An example of this is shown in Figure~\ref{fig:nextstep}. 

Finally, we have $n-2p$ unlinked vertices on both the top and the bottom lines. We link the $i$th unlinked vertex from the left on the top line to the $i$th unlinked vertex from the left on the bottom line for $1 \leq i \leq n-2p$, obtaining our diagram. 

\begin{rem} \label{reverserem}
The above starts from a label of the dual canonical basis and obtains a diagram. We can also construct the inverse. Given a diagram $D$, we can obtain the label of the dual canonical basis we get from the action of it acting on $v_- \otimes \dots \otimes v_- \otimes v_+ \otimes \dots \otimes v_+$ just by observation. We do this as follows: 
\begin{enumerate}
\item Label the points on the bottom line of $D$ left to right from $a_1$ to $a_n$. The labels $a_1$ to $a_k$ correspond to $v_-$ while $a_{k+1}$ to $a_n$ correspond to $v_+$. At the end of these steps, we will label the top line with some permutation $a_{i_1}$, $\dots$, $a_{i_n}$ of $a_1$ to $a_n$, and obtain the label of the dual canonical basis from that. 
\item For the links connecting the $i$th vertex on the bottom line to the $j$ vertex at the top line, move $a_i$ to the $j$th position on the top line. 
\item Define an ordering left to right for quasi-simple links as based on the position of the left vertex of the link. Let there be $\ell$ quasi-simple links. For each $1 \leq i \leq \ell$, if the $i$th quasi-simple link on the bottom has labels $a_m$ and $a_n$, in that order, we label the left and right vertices of the $i$th quasi-simple link at the top with $a_n$ and $a_m$ respectively. 
\item Now that we have a labeling $a_{i_1}$, $a_{i_2}$, $\dots$, $a_{i_n}$ of the top line from left to right, the label of the dual canonical basis for the action of the diagram is $a_{i_1} \heart a_{i_2} \heart \dots \heart a_{i_n}$. 
\end{enumerate}
\end{rem}

\begin{ex}
The diagram shown in Figure~\ref{fig:labelcomp} acting on $v_- \otimes v_- \otimes v_+ \otimes v_+ \otimes v_+$ gives $v_+ \heart v_+ \heart v_- \heart v_+ \heart v_-$, since $a_1 = a_2 = v_-$ and $a_3 = a_4 = a_5 = v_+$ in the labelling of the top line of the figure. 

\begin{figure}[!ht]
\centering
\tikzset{every picture/.style={line width=0.75pt}} 

\begin{tikzpicture}[scale=.25]
\node at (1,9){$\bullet$};
    \node at (5,9){$\bullet$};
    \node at (9,9){$\bullet$};
    \node at (13,9){$\bullet$};
        \node at (17,9){$\bullet$};
\node at (1,0){$\bullet$};
    \node at (5,0){$\bullet$};
    \node at (9,0){$\bullet$};
    \node at (13,0){$\bullet$};
        \node at (17,0){$\bullet$};
\draw (0,0) -- (18,0);
\draw (0,9) -- (18,9);
\draw (5,0) arc (180:0:2);
\draw (5,9) arc (180:360:2);
\draw (13,9) arc (180:360:2);
\begin{scope}
\clip (0,0) rectangle (18,9);
\draw (7,0) ellipse (6 and 4);
\end{scope}
\draw (17,0) .. controls (17,8) and (1,1) .. (1,9);
\node at (1, -1) {$a_1$};
\node at (5, -1) {$a_2$};
\node at (9, -1) {$a_3$};
\node at (13, -1) {$a_4$};
\node at (17, -1) {$a_5$};
\node at (1, 10) {$a_5$};
\node at (5, 10) {$a_4$};
\node at (9, 10) {$a_1$};
\node at (13, 10) {$a_3$};
\node at (17, 10) {$a_2$};
\end{tikzpicture}
\caption{Computing the label for an element of the dual canonical basis given a diagram in $\Ind_{\TL_2 \otimes \TL_3}^{\TL_5} \CC_\mathrm{triv}$. }
\label{fig:labelcomp}
\end{figure}
\end{ex}
\begin{lem}
The construction above gives the bijection described in Proposition~\ref{tlactionprop}. 
\end{lem}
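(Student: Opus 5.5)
The plan is to use that a dual canonical basis element is determined by its leading term. By condition (ii) and the upper triangularity condition (iii), $v_{k_n}\heart\dots\heart v_{k_1}$ is the unique dual canonical basis element whose expansion in the standard basis contains $v_{k_n}\otimes\dots\otimes v_{k_1}$ with coefficient $1$, and every other standard vector appearing is strictly smaller in the partial-sum order of (iii). Proposition~\ref{tlactionprop} already tells us that the image of each diagram $D$ in $\TLinduced$ is \emph{some} dual canonical basis element. So I only need to show that $D\cdot(v_-^{\otimes k}\otimes v_+^{\otimes n-k})$ has leading term equal to the tensor read off by Remark~\ref{reverserem}. I also need that the two constructions (label $\mapsto$ diagram and diagram $\mapsto$ label) are mutually inverse.

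First I will compute $D\cdot(v_-^{\otimes k}\otimes v_+^{\otimes n-k})$ by slicing $D$ into $\eps$'s, identities and $\delta$'s, as in the example of Figure~\ref{fig:slice}. For a nonzero image of $D$ in the induced module, no bottom cup of $D$ can join two vertices among $a_1,\dots,a_k$ or two among $a_{k+1},\dots,a_n$; otherwise the diagram lies in the left ideal generated by $\TL_k\otimes\TL_{n-k}$ and is $0$ in $\TLinduced$. Hence the bottom cups are nested and straddle position $k$, exactly as in the construction. Each such cup receives $v_-\otimes v_+$, which $\eps$ sends to $1$. The scalar is therefore a product of $1$'s, with no $-q$ factors. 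Through-strands carry their labels to the top unchanged. Each top cap produces $\delta(1)=v_+\otimes v_- - q^{-1}v_-\otimes v_+$. Expanding, the term with coefficient $1$ places $v_+$ on the left endpoint and $v_-$ on the right endpoint of every top cap. This matches step (3) of Remark~\ref{reverserem}, where the bottom labels $(a_m,a_n)=(v_-,v_+)$ are swapped to $(v_+,v_-)$. All other terms carry a power of $q^{-1}$. Since a $v_-$ moved to the left end of a cap changes the partial sums in the direction allowed by (iii), these terms are the lower ones. Thus the leading term is the label read off by the remark.

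Second, I will check that the forward procedure of Section~\ref{diagrambij} inverts this. Reading a top cup diagram with $v_+$ on the left and $v_-$ on the right of each cap, together with the through-strand labels, gives a word. The greedy matching in step (1) of the procedure (each $+$ matched to the nearest unmatched $-$ on its right in the written order) recovers exactly those caps. I must also check that the through-strands on top read, left to right, as some $+$'s followed by some $-$'s. This holds because the through-strands come from bottom vertices $a_1,\dots,a_k$ (labelled $-$) and $a_{k+1},\dots,a_n$ (labelled $+$) in order, with no crossings. I also have to reconcile the left/right orientation: the bottom labels place $v_-$ first, but the left-to-right reading of the top is reversed relative to the indexing $k_n,\dots,k_1$. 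With that bookkeeping, unmatched letters are never separated by a cap, so the greedy procedure leaves exactly the defects. This gives a bijection between labels in $(\CC^2)^{\otimes n}_k$ and diagrams with nonzero image. Since both sets have the same size, by Proposition~\ref{tlactionprop}, the construction gives the identification.

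The main obstacle is the first step's claim that the coefficient-$1$ term is the leading term in the sense of (iii), rather than merely one of several terms. The $q^{-1}$ terms from $\delta$ could a priori produce a standard vector not comparable to, or higher than, the claimed one. I would handle this by showing that flipping a cap's labels from $(+,-)$ to $(-,+)$ lowers exactly the partial sums over positions inside that cap and leaves all others unchanged. Combined with uniqueness of the dual canonical element with given leading term, this pins down the identification.
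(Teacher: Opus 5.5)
Your proof is correct and rests on the same key point as the paper's. The image of $D$ is already known to be some dual canonical basis element, and the coefficient-$1$ term of $D\cdot(v_-^{\otimes k}\otimes v_+^{\otimes n-k})$ identifies which one.

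The difference is direction. The paper starts from a label, builds $D$ by the forward procedure, observes that the label's tensor occurs with coefficient $1$, and stops. You start from an arbitrary diagram and identify its coefficient-$1$ term as the label of Remark~\ref{reverserem}. You then need a separate check that the forward procedure inverts this, plus a count. Your route is longer, but it also justifies Remark~\ref{reverserem}, which the paper states without proof.

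Two corrections.

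First, your ``main obstacle'' is not an obstacle. As your own opening sentence says, condition (ii) forces every other dual canonical basis element to have a coefficient in $q^{-1}\ZZ[q^{-1}]$ on a given standard tensor. So a coefficient of exactly $1$ already pins down the element, and the partial order in (iii) is never needed. This is precisely the paper's argument.

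Second, you have the defects backwards. Through-strands preserve the left-to-right order of $a_1,\dots,a_n$, so the defects on top read $-\cdots-+\cdots+$ from left to right, not $+\cdots+-\cdots-$ as you wrote. Your own justification actually proves the correct order. That order is exactly what the greedy step needs, since it pairs each $+$ with an unmatched $-$ to its right; with your stated order it would pair defects with each other. The other fact you need is that no defect lies under a cap. Defects can sit on both sides of a cap, e.g.\ the label $v_-\,v_+\,v_-\,v_+$ for $n=4$, $k=2$. So ``unmatched letters are never separated by a cap'' is not the right condition.
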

\begin{proof}
Given a label of the dual canonical basis $v_{k_n} \heart \dots \heart v_{k_1}$, let $D$ be the diagram constructed as above. By construction, the coefficient of $v_{k_n} \otimes \dots \otimes v_{k_1}$ of the action of $D$ on $\underbrace{v_- \otimes \dots \otimes v_-}_k \otimes \underbrace{v_+ \otimes \dots \otimes v_+}_{n-k}$ is equal to $1$. Since diagrammatic basis elements map to dual canonical basis elements and $v_{k_n} \heart \dots \heart v_{k_1}$ is the only dual canonical basis element where $v_{k_n} \otimes \dots \otimes v_{k_1}$ has a coefficient of $1$, we know that $D$ maps to $v_{k_n} \heart \dots \heart v_{k_1}$. 
\end{proof}

\begin{ex} \label{firstex}
The basis of $\Ind_{\TL_2 \otimes \TL_2}^{\TL_4}$ are the diagrams (three of which are shown in Figure~\ref{fig:indbasis}) that do not have simple arcs on the bottom line connecting the first and the second vertex, or the third and the fourth. 

\begin{figure}[!ht]
\centering
\tikzset{every picture/.style={line width=0.75pt}} 
\begin{tikzpicture}[scale=.25]
    \node at (1,9){$\bullet$};
    \node at (5,9){$\bullet$};
    \node at (9,9){$\bullet$};
    \node at (13,9){$\bullet$};
    \node at (1,0){$\bullet$};
    \node at (5,0){$\bullet$};
    \node at (9,0){$\bullet$};
    \node at (13,0){$\bullet$};
\node at (-2, 10) {A};
\draw (0,0) -- (14,0);
\draw (0,9) -- (14,9);
\draw (1, 0)--(1, 9);
\draw (5, 0)--(5, 9);
\draw (9, 0)--(9, 9);
\draw (13, 0)--(13, 9);
\end{tikzpicture}
\hspace{5mm}
\begin{tikzpicture}[scale=.25]
    \node at (1,9){$\bullet$};
    \node at (5,9){$\bullet$};
    \node at (9,9){$\bullet$};
    \node at (13,9){$\bullet$};
    \node at (1,0){$\bullet$};
    \node at (5,0){$\bullet$};
    \node at (9,0){$\bullet$};
    \node at (13,0){$\bullet$};
\node at (-2, 10) {B};
\draw (0,0) -- (14,0);
\draw (0,9) -- (14,9);
\draw (1,9) arc (180:360:2);
\draw (1,0) .. controls (1,8) and (9,1) .. (9,9);
\draw (5,0) arc (180:0:2);
\draw (13, 0)--(13, 9);
\end{tikzpicture}
\hspace{5mm}
\begin{tikzpicture}[scale=.25]
    \node at (1,9){$\bullet$};
    \node at (5,9){$\bullet$};
    \node at (9,9){$\bullet$};
    \node at (13,9){$\bullet$};
    \node at (1,0){$\bullet$};
    \node at (5,0){$\bullet$};
    \node at (9,0){$\bullet$};
    \node at (13,0){$\bullet$};
\node at (-2, 10) {C};
\draw (0,0) -- (14,0);
\draw (0,9) -- (14,9);
\draw (5,0) arc (180:0:2);
\draw (5,9) arc (180:360:2);
\begin{scope}
\clip (0,0) rectangle (14,9);
\draw (7,0) ellipse (6 and 4);
\draw (7,9) ellipse (6 and 4);
\end{scope}
\end{tikzpicture}
\caption{Some diagrams in the basis of diagrams in $\Ind_{\TL_2 \otimes \TL_2}^{\TL_4} \CC_\mathrm{triv}$.}
\label{fig:indbasis}
\end{figure}

Explicitly computing the actions gives the following:
\begin{align*}
&A(v_- \otimes v_- \otimes v_+ \otimes v_+) = v_- \heart v_- \heart v_+ \heart v_+\\
&B(v_- \otimes v_- \otimes v_+ \otimes v_+) = v_+ \heart v_- \heart v_- \heart v_+ \\
&C(v_- \otimes v_- \otimes v_+ \otimes v_+) = v_+ \heart v_+ \heart v_- \heart v_-.
\end{align*}
Note that we do not have enough information to confirm whether the action of diagrams explicitly corresponds with the element of the dual canonical basis for this example yet; this example is purely to demonstrate the permutation of components of $v_- \heart v_- \heart v_+ \heart v_+$. The explicit computations will be in Example~\ref{exampletwo}. 
\end{ex}

\section{Computation of the Dual Canonical Basis}\label{computationdual}
\subsection{Inductive Computation of the Dual Canonical Basis}\label{computationinductive}
We use the results in the preceding section in order to explicitly compute the dual canonical basis of $(\CC^2)^{\otimes n}$. The following inductive computation (see Theorem \ref{thm1.8}) of the dual canonical basis is due to Khovanov, who has an involved proof in \cite{khovanov}. We first reprove this theorem using the results in the previous section. 

\begin{thm}[\cite{khovanov}] \label{thm1.8}
The dual canonical basis consists of elements of the form $v_{k_n} \heart \dots \heart v_{k_1}$ corresponding to $v_{k_n} \otimes \dots \otimes v_{k_1}$. They are constructed by the following rules: 
\begin{enumerate}
\item[(i)] $v_- \heart v_{k_{n-1}} \heart \dots = v_- \otimes (v_{k_{n-1}} \heart v_{k_{n-2}} \heart \dots )$, 
\item[(ii)] $v_{k_{n}} \heart \dots \heart v_{k_2} \heart v_+ = (v_{k_{n}} \heart \dots \heart v_{k_2}) \otimes v_+$, and
\item[(iii)] $\dots \heart v_{k_{i+1}} \heart v_+ \heart v_- \heart v_{k_{i-2}} \heart \dots = (\id^{\otimes(k-1)} \otimes \delta \otimes \id^{\otimes(n-k-1)}) (\dots \heart v_{k_{i+1}} \heart v_{k_{i-2}} \heart \dots)$. 
\end{enumerate}
\end{thm}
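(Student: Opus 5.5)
The plan is to prove all three rules through Proposition~\ref{tlactionprop}: every dual canonical basis element is $D\cdot(v_-^{\otimes k}\otimes v_+^{\otimes (n-k)})$ for the diagram $D$ built in Section~\ref{diagrambij}. So each rule reduces to a statement about how that diagram for a label of length $n$ relates to the diagram for a shorter label. The reverse labelling procedure of Remark~\ref{reverserem} lets us read off, for each diagram, the top-line label it produces. We then only have to match diagrams and check that the identification survives the relevant inclusion or cap map. The argument we will actually use is the uniqueness-of-leading-term one from the last lemma of Section~\ref{diagrambij}. For each rule, we show that the right-hand side equals $D'\cdot(v_-^{\otimes k}\otimes v_+^{\otimes(n-k)})$ for some genuine $n$-diagram $D'$, or for a diagram in $\TLinduced$. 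We also show that its expansion has coefficient $1$ on $v_{k_n}\otimes\dots\otimes v_{k_1}$. Since diagrammatic basis elements map to dual canonical basis elements, and the label is determined by the leading term, the right-hand side must then equal $v_{k_n}\heart\dots\heart v_{k_1}$.

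\textbf{Rule (iii).} Let $D_0$ be the $(n-2)$-diagram for the shorter label $\dots\heart v_{k_{i+1}}\heart v_{k_{i-2}}\heart\dots$, which lies in weight $k-1$. Compose $D_0$ on top with $\delta$ placed at the positions of the adjacent pair $v_+,v_-$ (the paper's $\id^{\otimes(k-1)}\otimes\delta\otimes\id^{\otimes(n-k-1)}$, with the index read as the position $i$). To land on the correct start vector, precompose with a cap-and-cup at the boundary between the $v_-$ block and the $v_+$ block of $v_-^{\otimes k}\otimes v_+^{\otimes(n-k)}$. The map $\eps$ sends the middle pair $v_-\otimes v_+$ to $1$, so this recovers $v_-^{\otimes (k-1)}\otimes v_+^{\otimes(n-k-1)}$ with coefficient $1$. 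The composite is a single $n$-diagram $D'$, up to the scalar $1$, with no closed loops created. Its output contains $v_{k_n}\otimes\dots\otimes v_+\otimes v_-\otimes\dots$ with coefficient $1$, because $\delta(1)=v_+\otimes v_- - q^{-1}v_-\otimes v_+$. One can also check that $D'$ coincides with the diagram produced by the construction of Section~\ref{diagrambij}: the new top cap joins the nearest unmatched $+$ to $-$, and the new bottom cap is the innermost one around position $k$.

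\textbf{Rules (i) and (ii).} Take the $(n-1)$-diagram $D_0$ for the shorter label and tensor it with a straight strand, on the left for (i) and on the right for (ii). For (i), the extra leading $v_-$ sits in the $v_-$ block of the start vector, and for (ii) the trailing $v_+$ sits in the $v_+$ block. So $(\id\otimes D_0)$ or $(D_0\otimes\id)$ acting on the start vector equals $v_-\otimes(\dots)$ or $(\dots)\otimes v_+$ respectively. The leading coefficient is again $1$, and the leading-term argument above finishes these cases.

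The main obstacle is the leading-coefficient claim: showing that no other term in the expansion also contributes to $v_{k_n}\otimes\dots\otimes v_{k_1}$. In rule (iii), the relevant term arises only from choosing $v_+\otimes v_-$ in the new $\delta$ together with the leading term of the inner element. A competing contribution would have to use a lower term of the shorter element; we will exclude this using the weight (partial-sum) condition (iii) in the definition of the dual canonical basis. If that bookkeeping becomes messy, the fallback is to verify the bar-invariance and $q^{-1}\ZZ[q^{-1}]$ conditions directly, using that $\delta$ commutes with the bar involution as a $\quantum$-morphism.
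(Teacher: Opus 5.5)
Your proposal is correct, but it is organized differently from the paper's proof. The paper derives Theorem~\ref{thm1.8} from Theorem~\ref{diagramdualcomp}. It fixes the single diagram $D$ built from the full label in Section~\ref{diagrambij}, which the leading-coefficient lemma at the end of that section already matches with $v_{k_n}\heart\dots\heart v_{k_1}$, and slices it once, globally. The bottom caps are invisible because $\eps(v_-\otimes v_+)=1$. The top caps are claimed to supply exactly the $\delta$'s that repeated use of (iii) inserts. The through-strands carry the leftover block of $v_-$'s and $v_+$'s that (i) and (ii) produce.

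You instead verify each rule on its own. You realize each right-hand side as the action of a composite diagram, namely $\delta^n_j\circ D_0\circ\eps^n_k$, $\id\otimes D_0$ or $D_0\otimes\id$. All of these still have their bottom cups nested around position $k$, so they are basis diagrams of $\TLinduced$. You then re-run the leading-coefficient argument, using Proposition~\ref{tlactionprop} at ranks $n-1$ and $n-2$.

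The two routes buy different things:
\begin{itemize}
\item The paper's route gives the more concrete statement that the one explicit diagram of Section~\ref{diagrambij} realizes the entire recursion.
\item Your route needs no global information about that diagram, and it is more robust. The paper's claim that top caps come in stacks $b_jb_i, b_{j+1}b_{i-1},\dots$ fails for, e.g., $v_+\heart v_+\heart v_-\heart v_+\heart v_-\heart v_-$, whose outer cap encloses two side-by-side caps. Peeling off one adjacent $v_+\heart v_-$ at a time handles every cap configuration.
\end{itemize}

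Two small remarks. First, the step you call the main obstacle is immediate. Deleting the two inserted slots from the target vector determines the shorter standard vector uniquely. The term $-q^{-1}v_-\otimes v_+$ of $\delta(1)$ cannot put $v_+\otimes v_-$ in those slots. So the coefficient is exactly the leading coefficient $1$ of the shorter element, and no partial-sum bookkeeping is needed. Second, you should add the one-line observation that the rules are exhaustive: a label that neither begins with $v_-$ nor ends with $v_+$ must contain an adjacent $v_+\heart v_-$.
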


\begin{ex}\label{exampletwo}
We explicitly compute three elements of the dual canonical basis of $(\CC^2)^{\otimes 4}$, continuing our work from Example~\ref{firstex}. 

Assuming Theorem~\ref{thm1.8}, we obtain the following: 
\begin{enumerate}
\item[$\bullet$] $v_- \heart v_- \heart v_+ \heart v_+ = v_- \otimes v_- \otimes v_+ \otimes v_+$, 
\item[$\bullet$] $v_+ \heart v_- \heart v_- \heart v_+ = (v_+ \heart v_- \heart v_-) \otimes v_+ = v_+ \otimes v_- \otimes v_- \otimes v_+ - q^{-1}(v_- \otimes v_+ \otimes v_- \otimes v_+)$, and
\item[$\bullet$] $v_+ \heart v_+ \heart v_- \heart v_- = (\id \otimes \delta \otimes \id)(v_+ \heart v_-) = (\id \otimes \delta \otimes \id)(\delta) = (\id \otimes \delta \otimes \id)(v_+\otimes v_- - q^{-1}(v_- \otimes v_+)) = v_+ \otimes v_+ \otimes v_- \otimes v_- - q^{-1}(v_+ \otimes v_- \otimes v_+ \otimes v_-) - q^{-1}(v_- \otimes v_+ \otimes v_- \otimes v_+ - q^{-1}(v_- \otimes v_- \otimes v_+ \otimes v_+))$. 

\end{enumerate}
We confirm that the dual canonical basis aligns with the image of the action of $\Ind_{\TL_2 \otimes \TL_2}^{\TL_4} \CC_\mathrm{triv}$. Refer to Example~\ref{firstex} for the labels of the diagrams. We have the following actions: 

\begin{enumerate}
\item[$\bullet$] the action of $A$ is $v_- \otimes v_- \otimes v_+ \otimes v_+ \mapsto v_- \otimes v_- \otimes v_+ \otimes v_+$ which is equal to $v_- \heart v_- \heart v_+ \heart v_+$, 

\item[$\bullet$] the action of $B$ is $v_- \otimes v_- \otimes v_+ \otimes v_+ \mapsto v_- \otimes v_+ \mapsto v_+ \otimes v_- \otimes v_- \otimes v_+ - q^{-1}v_- \otimes v_+ \otimes v_- \otimes v_+$ which is equal to $v_+ \heart v_- \heart v_- \heart v_+$, and

\item[$\bullet$] the action of $C$ is $v_- \otimes v_- \otimes v_+ \otimes v_+ \mapsto v_- \otimes v_+ \mapsto 1 \mapsto v_+ \otimes v_- - q^{-1}v_- \otimes v_+ \mapsto v_+ \otimes v_+ \otimes v_- \otimes v_- - q^{-1}v_+ \otimes v_- \otimes v_+ \otimes v_- - q^{-1}(v_- \otimes v_+ \otimes v_- \otimes v_+ - q^{-1} v_- \otimes v_- \otimes v_+ \otimes v_+)$ which is equal to $v_+ \heart v_+ \heart v_- \heart v_-$. 
\end{enumerate}
Thus, we confirm that our claim in Example~\ref{firstex} is accurate. 
\end{ex}

\begin{thm}\label{diagramdualcomp}
The action of the diagram we obtain from the label of the dual canonical basis on $\underbrace{v_- \otimes \dots \otimes v_-}_k \otimes \underbrace{v_+ \otimes \dots \otimes v_+}_{n-k}$ is precisely the decomposition given in Theorem~\ref{thm1.8}. 
\end{thm}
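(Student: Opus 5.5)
We argue by induction on $n$, following the three rules of Theorem~\ref{thm1.8}. Given a label $v_{k_n}\heart\dots\heart v_{k_1}$ in $(\CC^2)^{\otimes n}_k$, let $D$ be the diagram built in Section~\ref{diagrambij}. The goal is to show that the action of $D$ on $u_k := v_-^{\otimes k}\otimes v_+^{\otimes n-k}$ can be computed step by step exactly as the recursion prescribes. Since the recursion determines a unique vector, and Proposition~\ref{tlactionprop} identifies $D\cdot u_k$ with the dual canonical basis element, this shows that the action of $D$ is precisely the decomposition given in Theorem~\ref{thm1.8}.

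The first step is to read off the shape of $D$ from its top line. We split into the same three cases as Theorem~\ref{thm1.8}.
\begin{enumerate}
\item[(a)] If the leftmost top label is $v_-$, then the matching procedure never links it, since links only go from a $v_+$ to a $v_-$ to its left. So it is a defect.
\item[(b)] If the rightmost label is $v_+$, it is likewise a defect.
\item[(c)] Otherwise there is an adjacent pair $v_+\,v_-$ in positions $i,i-1$, and the procedure links them by a simple quasi-simple link.
\end{enumerate}

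In case (a) the leftmost defect is the $i$th unlinked vertex with $i=1$. It therefore joins the first top vertex to the first unlinked bottom vertex, which is bottom vertex $1$ carrying $v_-$. This holds because the bottom quasi-simple links are nested around positions $k,k+1$, so vertex $1$ is unlinked whenever $k>p$, and $k>p$ holds since the top contains an unmatched $v_-$. Hence $D=\id\otimes D'$, where $D'$ is the diagram constructed from the shorter label with $k-1$ minus signs. Case (b) is symmetric, using the last bottom vertex, which carries $v_+$.

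In case (c) we use isotopy to factor $D=\delta_{i}^{n}\circ D''$, where $D''$ is an $(n-2,n)$-diagram. This factorization is where the main obstacle lies. We must check that $D''$, viewed as an element of $\TL_{n-2}$ after removing one bottom cup, agrees (up to isotopy) with the diagram constructed from the shortened label $\dots\heart v_{k_{i+1}}\heart v_{k_{i-2}}\heart\dots$ acting on $u_{k-1}$ in $(\CC^2)^{\otimes n-2}$. Concretely, the shortened label has $p-1$ top links, and its bottom cups are the inner $p-1$ of the nested family around $k-1,k$. So the plan is to show two things:
\begin{enumerate}
\item[(1)] Our outermost bottom cup, joining $k-p+1$ and $k+p$, composed through $\eps$ on the innermost bottom pair, reduces via relation (ii) of Theorem~\ref{tlcat} to the right configuration.
\item[(2)] The defects remain matched in left-to-right order.
\end{enumerate}
I expect to handle this more cleanly by a different route than tracking cups directly. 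The inverse labelling of Remark~\ref{reverserem}, applied to $\delta_i^n\circ D'''$ with $D'''$ the shorter diagram, recovers the original label. Since the diagram attached to a label is unique by Lemma~\ref{cupparenthesis} and the bijection of Proposition~\ref{tlactionprop}, we get $\delta_i^n\circ D'''=D$ and can avoid explicit isotopy.

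Finally, we combine the cases. In each case $D\cdot u_k$ equals $v_-\otimes(D'\cdot u_{k-1})$, $(D'\cdot u_k)\otimes v_+$, or $\delta_i^n(D'''\cdot u_{k-1})$ respectively. By the inductive hypothesis these inner actions are the recursive decompositions of the shorter labels, with the base case $n=0$ or $n=1$ trivial. This matches rules (i)–(iii), completing the induction.
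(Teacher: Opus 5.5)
Your proof is correct in outline and takes a genuinely different route from the paper. The paper argues globally. It discards all bottom cups at once, since each is $\eps$ applied to a $v_-\otimes v_+$ pair. It then asserts that the top link state is a side-by-side union of perfectly nested stacks, so that the $\delta$'s sit exactly where repeated use of rule (iii) puts them. Finally it observes that the through-strands carry $v_-^{\otimes\alpha}\otimes v_+^{\otimes\beta}$, matching rules (i) and (ii).

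You instead induct on $n$ and peel off a single rule by factoring $D$ as $\id\otimes D'$, $D'\otimes\id$, or a cup composed with a smaller diagram. Your local approach needs no global description of the link state, and this matters because the paper's nested-stack description does not hold for general labels. For $v_+\heart v_+\heart v_-\heart v_+\heart v_-\heart v_-$, the link joining $b_1$ and $b_6$ encloses the side-by-side links $b_2$--$b_3$ and $b_4$--$b_5$, not a link $b_2$--$b_5$. Peeling an adjacent $v_+v_-$ pair, by contrast, works for every label. Your factorizations in (a)--(c) are valid whenever the corresponding rule applies, so you also get that the result does not depend on the order in which rules (i)--(iii) are used.

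Case (c) needs tightening.
\begin{itemize}
\item \emph{The factorization.} As written, $\delta_i^n\circ D'''$ lies in $\hom([n-2],[n])$, so it cannot equal $D$. The correct factorization is $D=\delta_i^n\circ D'''\circ\eps_k^n$. The cap split off must be the \emph{innermost} bottom cup $(k,k+1)$, because this is the only adjacent $v_-\otimes v_+$ in $u_k$; this gives $\eps_k^n u_k=u_{k-1}$.
\item \emph{Your item (1) is unnecessary.} You do not need the outermost cup or relation (ii) of Theorem~\ref{tlcat}. After deleting positions $k,k+1$, the remaining $p-1$ bottom cups are exactly the nested family around $(k-1,k)$ on $n-2$ strands, and no closed loops form in the composite.
\item \emph{Identifying the composite with $D$.} Lemma~\ref{cupparenthesis} gives nothing here, and the inverse-labelling remark is only asserted in the paper. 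It is cleaner to use two elementary facts. First, the procedure of Section~\ref{diagrambij} is ordinary bracket matching, so an adjacent $v_+v_-$ pair is always linked and deleting it leaves every other top link unchanged. Second, a diagram whose bottom cups are nested around $(k,k+1)$ is determined by its top cups: both lines carry $p$ cups, and the through-strands are forced by planarity.
\end{itemize}

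Finally, in (a) your stated reason is positionally reversed. A link joins a $v_+$ to a $v_-$ on its \emph{right}, so a leftmost $v_-$ has no partner. Your conclusion there is nonetheless right.
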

\begin{proof}
All the quasi-simple links on the bottom line of the diagram can be ignored because $\eps\colon v_- \otimes v_+ \mapsto 1$. Label the vertices on the top line by $b_n$, $b_{n-1}$, $\dots$, $b_1$ from left to right. Let a quasi-simple link on the top line connect $b_j$ and $b_i$, where $j < i$. Then, there is a quasi-simple link on the top line connecting the $b_{j+1}$ and $b_{i-1}$, $b_{j+2}$ and $b_{i-2}$, and so on, until $b_{j+c}$ and $b_{i - c}$ where $i - c = j+c+1$. This phenomenon is shown in Figure~\ref{fig:linkstack}. 

\begin{figure}[!ht]
\centering
\tikzset{every picture/.style={line width=0.75pt}} 

\begin{tikzpicture}[scale=.25]
    \node at (1,9){$\bullet$};
    \node at (5,9){$\bullet$};
    \node at (9,9){$\bullet$};
    \node at (13,9){$\bullet$};
        \node at (17,9){$\bullet$};
            \node at (21,9){$\bullet$};
\draw (0,9) -- (22,9);
\draw (9,9) arc (180:360:2);
\begin{scope}
\clip (22,9) rectangle (0,0);
\draw (11,9) ellipse (6 and 4);
\end{scope}
\begin{scope}
\clip (22,9) rectangle (0,0);
\draw (11,9) ellipse (10 and 6);
\end{scope}
\node at (1, 10) {$b_8$};
\node at (5, 10) {$b_7$};
\node at (9, 10) {$b_6$};
\node at (13, 10) {$b_5$};
\node at (17, 10) {$b_4$};
\node at (21, 10) {$b_3$};
\node at (-2, 9) {$\dots$};
\node at (2, 9) {$\dots$};
\end{tikzpicture}
\caption{Example of the nested quasi-simple links described above. }
\label{fig:linkstack}
\end{figure}

We have $v_{k_j} = v_{k_{j+1}} = \dots = v_{k_{j+c}} = v_-$ and $v_{k_i} = v_{k_{i-1}} = \dots = v_{k_{i - c}} = v_+$ from construction. In the action of the Temperley-Lieb algebra, $\delta$ will act on $b_{j+1}$ and $b_{i-1}$, $b_{j+2}$ and $b_{i-2}$, and so on, until $b_{j+c}$ and $b_{i - c}$. In the decomposition of Theorem~\ref{thm1.8}, in condition (iii), $\delta$ first appears for components $v_{k_{j+c}}$ and $v_{k_{i - c}}$, then $v_{k_{j+c-1}}$ and $v_{k_{i - c+1}}$, and so on, until $v_{k_{j+1}}$ and $v_{k_{i-1}}$. The quasi-simple links of a link state is a horizontal composition of these ``nested quasi-simple links.'' This shows that the $\delta$ appear in the same places for the action of the diagram and in the decomposition. 

 Now, for the diagram action, we only need to considering the links connecting the top line and the bottom line. Let there be $\alpha$ such links within the first $k$ vertices and $\beta$ within the next $n-k$ vertices. The action of just those non quasi-simple links map $\underbrace{v_- \otimes \dots \otimes v_-}_\alpha \otimes \underbrace{v_+ \otimes \dots \otimes v_+}_\beta$ to $\underbrace{v_- \otimes \dots \otimes v_-}_\alpha \otimes \underbrace{v_+ \otimes \dots \otimes v_+}_\beta$, but to a possibly different set of vertices from the top and bottom. 

Similarly, in the dual canonical basis, after condition (iii) is applied as many times as possible, the remaining label that is acted upon is $\underbrace{v_- \heart \dots \heart v_-}_\alpha \heart \underbrace{v_+ \heart \dots \heart v_+}_\beta$ since otherwise, (iii) can be further applied. This is just $\underbrace{v_- \otimes \dots \otimes v_-}_\alpha \otimes \underbrace{v_+ \otimes \dots \otimes v_+}_\beta$ by conditions (i) and (ii), which is consistent with the action of the diagram it is in bijection with. 
\end{proof}

Recall that Proposition~\ref{tlactionprop} gives a bijection between diagrams in $\Ind_{\TL_k \otimes \TL_{n-k}}^{\TL_n} \CC_\mathrm{triv}$ and the dual canonical basis elements $v_{k_n} \heart \dots \heart v_{k_1}$ with $k$ of the $k_i$ equal to $-$ and $n-k$ of the $k_i$ equal to $+$. We explicitly constructed this bijection and computed the action, proving that Theorem~\ref{thm1.8} holds through a Temperley-Lieb perspective. 

\begin{rem}
The last paragraph of page 6 of \cite{khovanov} states that the author doesn't know Proposition~\ref{fg} relates to his result of Theorem~\ref{thm1.8}. We have just resolved that. 
\end{rem}

\subsection{Explicit Formula for the Dual Canonical Basis}\label{computationexplicit}
Furthermore, using the results from Section~\ref{diagrambij}, we can write explicit formulas for the elements of the dual canonical basis. 
We generalize a method used in \cite{DNY} to study a different but related object in order to compute the dual canonical basis of $(\CC^2)^{\otimes n}$. Given a diagram $D$, label the vertices of both lines in $D$ from right to left. 

First, we state some definitions necessary in the explicit formula of the dual canonical basis. 
\begin{defn}
Given a diagram $D$ in $\Ind_{\TL_k\otimes \TL_{n-k}}^{\TL_n} \CC_\mathrm{triv}$ define $C(D)$ as
\[ C(D) = \{ (i, j) \mid 1 \leq i, j \leq n; i \text{ and } j \text{ are connected on the top line of diagram $D$}\}.\]
\end{defn}
\begin{defn}
If $D$ is a diagram with $m$ arcs on the top line, let $T(D)$ consist of all $m$-element subsets $S$ of $\{1, \dots, n\}$ such that $|S \cap C| = 1$ for each $C \in C(D)$. Let $\min(C)$ be the minimal element of $C$, and for each $S \in T(D)$, 
\[ \sgn(S) = (-q^{-1})^{c(S)} \text{ where } c(S) = |\{ C : C \in C(D), |C| = 2, S \cap C = \min(C)\}. \]
\end{defn}
\begin{defn}
For each $I \in T(D)$, define $v_I$, a vector in $(\CC^2)^{\otimes n}$ with the following procedure: 
\begin{enumerate}
\item For each link connecting the $i$th vertex on the bottom line to the $j$th vertex of the top line, set the $j$th component of $v_I$ equal to $v_-$ if $i \leq k$ and $v_+$ if $i > k$. 
\item For the components of $v_I$ that we have not set yet, set the $i$th position to $v_+$ if $i \in I$, and $v_-$ otherwise. 
\end{enumerate}
\end{defn}

Finally, we can describe the explicit formula for the dual canonical basis. 
\begin{thm}\label{dualcomputations}
For $D \in \Ind_{\TL_k\otimes \TL_{n-k}}^{\TL_n} \CC_\mathrm{triv}$, the dual canonical basis corresponding to $D$ is equal to 
\[ \sum_{I \in T(D)} \sgn(I)v_I. \]
\end{thm}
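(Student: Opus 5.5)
By Proposition~\ref{tlactionprop} and the explicit bijection of Section~\ref{diagrambij}, the dual canonical basis element corresponding to $D$ is exactly $D\cdot(v_-^{\otimes k}\otimes v_+^{\otimes(n-k)})$. So the plan is to compute this action directly and check that it equals $\sum_{I\in T(D)}\sgn(I)v_I$. The computation uses the slicing picture from Section~\ref{tlaction}: every diagram factors as a composite of bottom caps (copies of $\eps$), then a planar matching of the remaining through-strands, then top cups (copies of $\delta$). This factorization is legitimate by Theorem~\ref{tlcat}, since the action depends only on the diagram.

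\textbf{Step 1 (bottom caps).} Since $D$ is a basis element of $\TLinduced$, no bottom quasi-simple link lies entirely inside the first $k$ or the last $n-k$ vertices. By planarity, each bottom cap therefore joins some position $i\le k$ to some position $j>k$, and the caps are nested around the boundary between positions $k$ and $k+1$. This is the form used in the construction of Section~\ref{diagrambij}: vertex $k-i$ is joined to vertex $k+1+i$. Contracting the innermost cap applies $\eps$ to $v_-\otimes v_+$, which gives $1$. Inducting outward, the caps together contribute the scalar $1$. What remains is $v_-^{\otimes \alpha}\otimes v_+^{\otimes\beta}$ on the through-strands, where $\alpha$ counts the through-strands whose bottom endpoint is at position $\le k$. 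This is the same observation as in the first line of the proof of Theorem~\ref{diagramdualcomp}.

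\textbf{Step 2 (through-strands).} The through-strands carry their vectors to the top endpoints unchanged. Hence the top position $j$ of a through-strand receives $v_-$ or $v_+$ according to whether its bottom endpoint is at position $\le k$ or $>k$. This matches clause (1) in the definition of $v_I$.

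\textbf{Step 3 (top cups).} Each top cup $C=\{a,b\}\in C(D)$ inserts a copy of $\delta(1)=v_+\otimes v_- - q^{-1}v_-\otimes v_+$ into positions $a,b$. The $\delta$'s act on disjoint pairs of positions, so by multilinearity the result is a sum over independent choices, one term from each cup. A choice amounts to deciding which endpoint of each cup carries $v_+$. The set of these endpoints is precisely an $S$ with $|S\cap C|=1$ for every $C$, i.e.\ an element of $T(D)$. The remaining endpoint of each cup carries $v_-$, which recovers clause (2) of the definition of $v_I$. The coefficient is the product over cups of $1$ or $-q^{-1}$.

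\textbf{Step 4 (sign convention), the main obstacle.} It remains to check that the coefficient $-q^{-1}$ arises exactly when $S\cap C=\min(C)$ under the paper's right-to-left labeling. In $\delta(1)$, the term with $v_+$ on the left strand has coefficient $1$. With vertices labeled from right to left, the left endpoint of a cup has the larger label, so "$v_+$ on the larger label" gives coefficient $1$ and "$v_+$ on $\min(C)$" gives coefficient $-q^{-1}$. Multiplying over cups gives $(-q^{-1})^{c(S)}=\sgn(S)$. Summing over $S\in T(D)$ then gives the claimed formula.

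The bookkeeping of orientation is where errors could occur, namely the interaction of the right-to-left labels with the left-to-right tensor order. To guard against this, I would sanity-check the formula against Example~\ref{exampletwo}, diagram $C$, where the term $v_-\otimes v_-\otimes v_+\otimes v_+$ carries coefficient $q^{-2}$, coming from both cups having $v_+$ on their minimal label.
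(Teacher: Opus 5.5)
Your proposal is correct and is essentially the paper's argument. The paper inducts on the number of top arcs, writing $D=\delta_i D'$ with the bottom caps already contracted as in the proof of Theorem~\ref{diagramdualcomp}; this is your simultaneous multilinear expansion of the cups, carried out one $\delta$ at a time. Your write-up is more explicit than the paper's, since it checks the bottom-cap contraction and the sign $(-q^{-1})^{c(S)}$ against the right-to-left labels. One caveat: your Step 2 agrees with clause (1) of the definition of $v_I$ only if the bottom line is indexed left to right. Under the stated right-to-left labeling, that clause would have to read $i>n-k$, which is a convention slip in the statement rather than a flaw in your computation.
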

\begin{proof}
Consider $D$ embedded in category $\TL$. We will prove this lemma by induction on the number of arcs in the top line. Let there be $m$ arcs in the top line. In the base case of $m = 0$, the dual canonical basis element is $v_I = v_- \otimes \dots \otimes v_- \otimes v_+ \otimes \dots \otimes v_+$, which is consistent with the lemma as $T(D)$ is the empty set. For $D \in \Hom(n-2m, n)$, we can express $D = \delta_i D'$, where $D' \in \Hom(n-2m, n-2)$ and $\delta_i$ is $\delta$ acting after the $i$th component, representing an arc inserted between the $i$th and $i+1$th positions. We have 
\begin{align*}
D &= \delta_i D'\\
&= \sum_{I \in T(D')} \sgn(I)\delta_i v_I\\
&= \sum_{I \in T(D)}\sgn(I)v_I. \qedhere
\end{align*}
\end{proof}

\begin{rem}
Note that this explicit formula for the dual canonical basis was only possible due to the results in Section~\ref{diagrambij}, further demonstrating the significance of our interpretation via the Temperley-Lieb algebra. 
\end{rem}

\subsection{Explicit Formula for the Canonical Basis of the Spherical Module}
In \cite{khovanov}, Khovanov describes an isomorphism $\widetilde{Seq}\colon \mM \rightarrow (\CC^2)^{\otimes n}_k$ that identifies the canonical basis of the spherical module with the dual canonical basis of the spin representation (see Proposition~\ref{sphericalcanonical}). We can also explicitly compute the canonical basis of the spherical module as a corollary of Theorem~\ref{dualcomputations}. 

We construct an isomorphism $\widetilde{Seq}^{-1}\colon (\CC^2)^{\otimes n}_k \rightarrow \mathcal M$, which is the inverse of the map described in \cite{khovanov}. We use this isomorphism to explicitly calculate this canonical basis. 

\begin{defn}
Given a vector $v = v_{i_1} \otimes v_{i_2} \otimes \dots \otimes v_{i_n}$, where $k$ of the $i_j$ are $-$ and $n-k$ are $+$, let $\{a_1, \dots, a_k\}$ be the sequence where $i_{a_1} = \dots = i_{a_k} = -$ from left to right. Then, we define $\widetilde{Seq}^{-1}\colon v \mapsto M_w$ where $w = (s_{a_1-1}s_{a_1-2}\dots s_1)\dots  (s_{a_{k-1}-1}s_{a_{k-1}-2}\dots s_{k-1})(s_{a_k-1}s_{a_k-2}\dots s_k).$ 
\end{defn}
\begin{rem}
Note that this definition of  $\widetilde{Seq}^{-1}$ is indeed inverse to $\widetilde{Seq}$ as defined in \cite{khovanov} due to the explicit nature of the action. In our definition, each sequence $s_{a_i - 1}s_{a_i-2}\dots s_i$ moves one $v_-$ from the $i$th position into its correct position. 
\end{rem}
\begin{cor}\label{sphericalcor}
Let $I_0$ be the unique element in $T(D)$ where $\sgn(I_0) = 1$. Then, the canonical basis element of $\mathcal M$ with the first term $\widetilde{Seq}^{-1}(v_{I_0})$ is equal to $\sum_{I \in T(D)} \sgn(I)\widetilde{Seq}^{-1}(v_I)$. 
\end{cor}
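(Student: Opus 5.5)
The plan is to transport Theorem~\ref{dualcomputations} to $\mM$ along the isomorphism $\widetilde{Seq}^{-1}\colon (\CC^2)^{\otimes n}_k \to \mM$. By Proposition~\ref{fkk} and Khovanov's result (Proposition~\ref{sphericalcanonical}), $\widetilde{Seq}$ is an isomorphism $\mM \to (\CC^2)^{\otimes n}_k$ that sends canonical basis elements $\underline M_w$ to dual canonical basis elements. It sends standard basis elements $M_w$ to standard tensors: the remark after the definition shows that $\widetilde{Seq}^{-1}$ takes standard tensors to standard basis elements $M_w$, and it is linear. Hence $\widetilde{Seq}^{-1}$ is linear, sends the dual canonical basis to the canonical basis, and sends standard tensors bijectively onto $\{M_w : w \in W^J\}$.

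First, I fix the diagram $D$ and write the dual canonical basis element from Theorem~\ref{dualcomputations} as $b_D = \sum_{I\in T(D)}\sgn(I)v_I$. Next I show that $I_0$ is unique. Here $\sgn(I) = (-q^{-1})^{c(I)}$, so $\sgn(I)=1$ exactly when $c(I)=0$, meaning that for every arc $C$ the chosen element is not $\min(C)$. Since each arc contributes exactly one element of $I$, this forces $I_0$ to be the set of maxima of the arcs, so $I_0$ exists and is unique. Every other $I$ has $\sgn(I)\in q^{-1}\ZZ[q^{-1}]$, strictly of positive degree in $q^{-1}$. The vectors $v_I$ for distinct $I$ are distinct standard tensors, since they differ on the arc positions. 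So $v_{I_0}$ is the leading term of $b_D$ with coefficient $1$. By condition (ii) of the dual canonical basis theorem, $b_D = v_{k_n}\heart\dots\heart v_{k_1}$ where $v_{k_n}\otimes\dots\otimes v_{k_1} = v_{I_0}$. I will cross-check this against the link construction of Section~\ref{diagrambij}: arcs join a $v_-$ on the left to a $v_+$ on the right, and the right endpoint carries the larger label, because vertices are labelled from right to left.

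Then I apply $\widetilde{Seq}^{-1}$, which gives $\widetilde{Seq}^{-1}(b_D)=\sum_I \sgn(I)\widetilde{Seq}^{-1}(v_I)$. This element is a canonical basis element $\underline M_w$. Its expansion in the $M_y$ has coefficient $1$ on $\widetilde{Seq}^{-1}(v_{I_0})$ and coefficients in $q^{-1}\ZZ[q^{-1}]$ elsewhere. By the characterization $\underline M_w\in M_w+\sum_y q^{-1}\ZZ[q^{-1}]M_y$, the index $w$ is determined by $M_w=\widetilde{Seq}^{-1}(v_{I_0})$. This is exactly the statement.

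The main obstacle is justifying that $\widetilde{Seq}^{-1}$, as defined explicitly here, really is linear, sends standard tensors to standard elements $M_w$ without extra powers of $q$, and matches the map identifying canonical with dual canonical bases. I would handle this by citing Khovanov's $\widetilde{Seq}$ together with Proposition~\ref{fkk}. I would check that $\iota(M_w)=H_w\cdot(v_-^{\otimes k}\otimes v_+^{\otimes n-k})$ is a single standard tensor for $w\in W^J$. This holds because each $H_{s_i}$ in a reduced word for a minimal coset representative acts by swapping a $v_-\otimes v_+$ pair into $v_+\otimes v_-$, with coefficient $1$ from $q^{-1/2}\check R_{11}$.
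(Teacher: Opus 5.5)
Your proposal is correct and follows the same route as the paper: you transport Theorem~\ref{dualcomputations} along $\widetilde{Seq}^{-1}$ using Khovanov's identification of the $\underline M_w$ with dual canonical basis elements. You also supply details the paper leaves implicit: uniqueness of $I_0$ as the set of arc maxima, the leading-term argument that pins down which $\underline M_w$ results, and the check that $H_{s_i}$ sends $v_-\otimes v_+$ to exactly $v_+\otimes v_-$, so that $\widetilde{Seq}$ agrees with the embedding of Proposition~\ref{fkk}.

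There is one small slip in your optional cross-check. With vertices labelled right to left, the right endpoint of an arc carries the smaller label, and the construction of Section~\ref{diagrambij} joins a $v_+$ on the left to a $v_-$ on the right. The two reversals cancel, so your identification of $v_{I_0}$ as the leading term is unaffected.
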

\begin{proof}
The map $\widetilde{Seq}^{-1}$ is an explicit description of the inverse of $\widetilde{Seq}$ described in \cite{khovanov}, which identifies the Kazhdan-Lusztig basis of $\mathcal M$ with the dual canonical basis of $(\CC^2)^{\otimes n}$. As Theorem~\ref{dualcomputations} describes the dual canonical basis of $(\CC^2)^{\otimes n}$ and this corollary applies $\widetilde{Seq}^{-1}$ to it, the result is the canonical basis of $\mathcal M$. 
\end{proof}

\section{The Isomorphism $\End_\bU((\CC^2)^{\otimes n}) \cong ((\CC^2)^{\otimes 2n})^\bU$ Through $\TL_n$} \label{cor1.10section}
Let $\bU = \quantum$ denote the quantum group. In \cite{khovanov}, Khovanov gives the construction of the isomorphism $\End_\bU((\CC^2)^{\otimes n}) \cong ((\CC^2)^{\otimes 2n})^\bU$ using a complicated proof. We prove this construction using the bijection between the diagrammatic basis of the Temperley-Lieb algebra and the dual canonical basis that we explored in Section \ref{bijectionsection}. First, we recall a few facts about the quantum group $\bU = \quantum$. 

\begin{defn}[Quantum Group]
The quantum group $\quantum$ is an algebra over $\CC(q)$ with generators $E$, $F$, $K$, and $K^{-1}$ and relations 
\[KK^{-1}=1=K^{-1}K, \  KE = q^2EK, \ KF=q^{-2}FK\] and 
\[EF-FE=\frac{K-K^{-1}}{q-q^{-1}}. \]
\end{defn}

We can define the comultiplication $\bar \Delta$ in $\quantum$ by
\[\bar\Delta(K^{\pm 1}) = K^{\pm 1} \otimes K^{\pm 1}, \ \bar\Delta(E) = E \otimes 1 + K^{-1} \otimes E, \ \bar\Delta(F) = F \otimes K + 1 \otimes F\]
and the antipode $S$ by 
\[ S(K^{\pm 1}) = K^{\mp 1}, \ S(E) = -KE, \ S(F) = -FK^{-1}.\]

\begin{lem}[\cite{kassel}]
The following rules give an action of $\quantum$ on $\CC^2$: 
\[ K^{\pm 1}v_+= q^{\pm 1} v_+, \ K^{\pm 1}v_{-} = q^{\mp 1}v_{-}, \ Ev_+ = 0, \ Ev_{-} = v_+, \ Fv_+ = v_{-}, Fv_{-} = 0. \] 
\end{lem}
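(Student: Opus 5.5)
The plan is to show directly that the stated formulas define an algebra action. Since $\CC^2$ has basis $\{v_+, v_-\}$, I would let $E$, $F$, $K$, $K^{-1}$ act by the explicit $2\times 2$ matrices given by the rules. Concretely, $K = \mathrm{diag}(q, q^{-1})$ and $K^{-1} = \mathrm{diag}(q^{-1}, q)$. The operator $E$ is the matrix sending $v_-\mapsto v_+$ and $v_+\mapsto 0$, and $F$ is its transpose. To obtain a representation of $\quantum$, which is presented by generators and relations, it suffices to check that these matrices satisfy the defining relations. By the universal property of the presentation, the assignment then extends uniquely to an algebra homomorphism $\quantum \to \End(\CC^2)$, after extending scalars to $\CC(q)$ or specialising at our fixed non-root-of-unity $q$.

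I will verify the relations one at a time, each on the two basis vectors.
\begin{enumerate}
\item[$\bullet$] $KK^{-1}=1=K^{-1}K$ holds because the diagonal entries are inverse to each other.
\item[$\bullet$] For $KE=q^2EK$: both sides kill $v_+$. On $v_-$, the left side gives $K v_+ = q v_+$, and the right side gives $q^2 E(q^{-1} v_-) = q v_+$.
\item[$\bullet$] For $KF=q^{-2}FK$: both sides kill $v_-$. On $v_+$, the left side gives $K v_- = q^{-1}v_-$, and the right side gives $q^{-2}F(q v_+) = q^{-1}v_-$.
\item[$\bullet$] For the commutator relation, $EF-FE$ sends $v_+ \mapsto E v_- - 0 = v_+$ and $v_- \mapsto 0 - F v_+ = -v_-$. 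Meanwhile $\frac{K-K^{-1}}{q-q^{-1}}$ acts on $v_+$ by $\frac{q-q^{-1}}{q-q^{-1}}=1$ and on $v_-$ by $\frac{q^{-1}-q}{q-q^{-1}}=-1$.
\end{enumerate}
These agree, so all relations hold.

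No step here is a real obstacle. The one point of care is that the denominator $q-q^{-1}$ must be nonzero, which holds because $q$ is not a root of unity (in particular $q\neq\pm1$). I would also remark that this is the standard two-dimensional irreducible (vector) representation of $\quantum$ from \cite{kassel}. On tensor powers $(\CC^2)^{\otimes n}$, the action is then obtained through the comultiplication $\bar\Delta$ defined above.
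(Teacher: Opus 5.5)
Your proposal is correct. The paper gives no argument of its own and simply cites Kassel, where the same direct check of the defining relations on the basis vectors $v_\pm$ is how the standard two-dimensional module is shown to be a representation. Your one point of care, that $q-q^{-1}\neq 0$ (equivalently, working over $\CC(q)$ or specializing to $q\neq\pm1$), is the only subtlety, and you handle it.
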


Let $A$ be a coalgebra. We assert a few general facts about $A$-modules, all of which follow in a standard way from the content of \cite{kassel}. 
\begin{lem}
If $V$ is a $A$ module, then $V^*$ is a $A$ module by $(a \cdot \alpha)(v) = \alpha(S(a) \cdot v)$ for $\alpha \in V^*$, $ a \in A$. 
\end{lem}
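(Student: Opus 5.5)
The plan is to check the three ingredients of a module structure on $V^*$ directly from the definition $(a\cdot\alpha)(v)=\alpha(S(a)\cdot v)$. Here $A$ is a Hopf algebra (in our case $\bU$), so that the antipode $S$ exists. The three ingredients are: the formula defines a linear functional; the map $(a,\alpha)\mapsto a\cdot\alpha$ is bilinear; and the action is associative and unital. The first two are immediate. For fixed $a$ and $\alpha$, the map $v\mapsto \alpha(S(a)\cdot v)$ is a composition of linear maps, hence lies in $V^*$. Bilinearity in $(a,\alpha)$ follows because $S$ is linear and $V$ is a module.

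The substantive step is associativity. We need
\[ (ab)\cdot\alpha = a\cdot(b\cdot\alpha). \]
Evaluating both sides on $v$, the left side is $\alpha(S(ab)\cdot v)$. The right side is $(b\cdot\alpha)(S(a)\cdot v)=\alpha(S(b)\cdot(S(a)\cdot v))=\alpha(S(b)S(a)\cdot v)$. So associativity reduces to the identity $S(ab)=S(b)S(a)$, i.e. $S$ is an algebra anti-homomorphism. This is the standard fact from \cite{kassel} that the antipode of a Hopf algebra is an anti-automorphism.

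For $\bU$ one could also verify it concretely. Here $S$ is defined on generators and extended as an anti-homomorphism. The only thing to check is that this extension is well defined, i.e. that it respects the defining relations of $\quantum$. For example, $S(KE)=S(E)S(K)=-KEK^{-1}=-q^2E$, and $S(q^2EK)=q^2S(K)S(E)=-q^2K^{-1}KE=-q^2E$, which agree. The relation $EF-FE=\frac{K-K^{-1}}{q-q^{-1}}$ is handled the same way.

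Unitality requires $1\cdot\alpha=\alpha$, which follows from $S(1)=1$. This holds because $S$ is an (anti-)algebra map, or alternatively from the antipode axiom $m(S\otimes\id)\Delta(1)=\varepsilon(1)1$ together with $\Delta(1)=1\otimes 1$.

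The main obstacle is only the anti-multiplicativity of $S$. Note that the lemma's hypothesis ``coalgebra'' must really be read as ``Hopf algebra,'' since $S$ is needed. I would cite \cite{kassel} for $S(ab)=S(b)S(a)$ in general rather than reprove it. If a self-contained argument were wanted, I would use the convolution-algebra trick: both $S\circ m$ and $m\circ\tau\circ(S\otimes S)$ are convolution inverses of $m$ in $\Hom(A\otimes A,A)$, so they coincide.
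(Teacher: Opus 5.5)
Your argument is correct and is exactly the standard verification that the paper delegates to \cite{kassel}: associativity of the action reduces to the anti-multiplicativity $S(ab)=S(b)S(a)$, unitality reduces to $S(1)=1$, and your computations for $\bU$ correctly confirm that the given $S$ respects the defining relations. You are also right that the hypothesis ``coalgebra'' must be read as ``Hopf algebra''; in fact this lemma uses only that $S$ is a unital algebra anti-homomorphism, not the antipode axiom itself.
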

\begin{lem} \label{lem1}
There is an isomorphism of $A$-modules $W \otimes V^* \cong \Hom(V, W)$ given by $\phi_1\colon w \otimes \alpha \mapsto (v \mapsto \alpha(v)w)$. 
\end{lem}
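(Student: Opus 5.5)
Assume $V$ and $W$ are finite-dimensional and $A$ is a Hopf algebra (we need the antipode $S$ for the module structure on $V^*$). The plan is to check two things separately: that $\phi_1$ is a linear isomorphism, and that it intertwines the $A$-actions. Here $\Hom(V,W)$ carries its standard $A$-module structure
\[
(a\cdot f)(v)=\sum a_{(1)}\, f\bigl(S(a_{(2)})\, v\bigr),
\qquad \Delta(a)=\sum a_{(1)}\otimes a_{(2)},
\]
in Sweedler notation. For $\quantum$ we take $\Delta=\bar\Delta$ with the antipode above.

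For the linear part, we will check that $\phi_1$ is well defined and bilinear on pure tensors, hence a linear map $W\otimes V^*\to\Hom(V,W)$. Choose bases $\{w_i\}$ of $W$, $\{v_j\}$ of $V$, and the dual basis $\{v_j^*\}$ of $V^*$. Then $\phi_1(w_i\otimes v_j^*)$ is the elementary map $v_j\mapsto w_i$, killing the other $v_{j'}$. These maps form a basis of $\Hom(V,W)$, so $\phi_1$ sends a basis to a basis and is bijective. Finite-dimensionality is used exactly here.

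For equivariance, fix $w\otimes\alpha$ and $a\in A$. The action on the tensor product gives $a\cdot(w\otimes\alpha)=\sum a_{(1)}w\otimes a_{(2)}\alpha$. Applying $\phi_1$ and evaluating at $v$ yields
\[
\sum (a_{(2)}\alpha)(v)\, a_{(1)}w=\sum \alpha\bigl(S(a_{(2)})v\bigr)\, a_{(1)}w,
\]
using the definition of the dual module from the preceding lemma. On the other side,
\[
\bigl(a\cdot\phi_1(w\otimes\alpha)\bigr)(v)=\sum a_{(1)}\,\alpha\bigl(S(a_{(2)})v\bigr)\,w,
\]
and the scalar $\alpha(\cdot)$ pulls out, so the two expressions coincide.

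The only real obstacle is convention-matching: the Hom action must be defined with the same ordering of $a_{(1)},a_{(2)}$ and the same antipode as the dual action. If the paper intends the opposite convention, $V^*\otimes W$ with action through $S^{-1}$, we would instead check the identity on the generators $E,F,K^{\pm1}$ using the explicit $\bar\Delta$ and $S$, which is a finite computation. Since both sides are module structures, it suffices to verify compatibility on generators in either case.
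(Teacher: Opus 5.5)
Your proof is correct. It is the standard verification the paper has in mind: the paper gives no argument of its own and only remarks that these facts follow in a standard way from \cite{kassel}. The hypotheses you add are exactly what the paper's statement leaves implicit: $A$ must be a Hopf algebra rather than merely a coalgebra (the antipode is needed to define $V^*$), $V$ must be finite-dimensional, and $\Hom(V,W)$ must carry the action $(a\cdot f)(v)=\sum a_{(1)}f(S(a_{(2)})v)$. That last choice is also the right one for the later use in Corollary~\ref{bij}, since its invariants are precisely $\Hom_A(V,W)$.
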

%
\begin{lem} \label{lem2}
There is an isomorphism of $A$-modules $W^* \otimes V^* \cong (V \otimes W)^*$ given by $\phi_2\colon \alpha \otimes \beta \mapsto (v \otimes w \mapsto \alpha(w)\beta(v))$. 
\end{lem}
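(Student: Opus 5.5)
The statement to prove is Lemma~\ref{lem2}: the map $\phi_2\colon W^* \otimes V^* \to (V \otimes W)^*$, $\alpha \otimes \beta \mapsto (v \otimes w \mapsto \alpha(w)\beta(v))$, is an isomorphism of $A$-modules. Here $A$ is really a Hopf algebra, since the dual action uses the antipode $S$.

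The proof splits into two parts: linear bijectivity and $A$-equivariance. We will use the standard facts that $\Delta$ is an algebra map and that $S$ is an anti-algebra and anti-coalgebra map. The latter means $\Delta(S(a)) = \sum S(a_{(2)}) \otimes S(a_{(1)})$ in Sweedler notation, where $\Delta(a) = \sum a_{(1)} \otimes a_{(2)}$.

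\emph{Bijectivity.} Take bases $\{v_i\}$ of $V$ and $\{w_j\}$ of $W$, with dual bases $\{v_i^*\}$ and $\{w_j^*\}$; all modules here are finite dimensional. Then $\phi_2(w_j^* \otimes v_i^*)$ is the functional dual to $v_i \otimes w_j$. So $\phi_2$ sends a basis to a basis and is a linear isomorphism. The flip in the order of factors is exactly what makes this a canonical pairing compatible with the module structures below.

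\emph{Equivariance.} Fix $a \in A$, $\alpha \in W^*$, $\beta \in V^*$.

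The left-hand side: $a$ acts on $W^*\otimes V^*$ by $\Delta(a) = \sum a_{(1)}\otimes a_{(2)}$. Applying $\phi_2$ and evaluating on $v\otimes w$ gives
\[ \sum \alpha\bigl(S(a_{(1)})w\bigr)\,\beta\bigl(S(a_{(2)})v\bigr). \]

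The right-hand side: $a\cdot\phi_2(\alpha\otimes\beta)$ evaluated on $v\otimes w$ is $\phi_2(\alpha\otimes\beta)\bigl(S(a)\cdot(v\otimes w)\bigr)$. Using the anti-coalgebra property, $S(a)$ acts on $v \otimes w$ as $\sum S(a_{(2)})v \otimes S(a_{(1)})w$. So the right-hand side equals
\[ \sum \alpha\bigl(S(a_{(1)})w\bigr)\,\beta\bigl(S(a_{(2)})v\bigr), \]
which matches the left-hand side.

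The only real obstacle is the bookkeeping of tensor-factor order. The reversal in $\Delta\circ S = (S\otimes S)\circ\Delta^{\mathrm{op}}$ must line up with the swap built into $\phi_2$. If a mismatch appeared, the fix would be to check the convention for $\bar\Delta$. For $\quantum$ one can also verify the identity directly on the generators $E,F,K^{\pm1}$: both sides are algebra actions of $a$, so checking generators suffices. A sanity check is $K$, which is grouplike and therefore trivially compatible.
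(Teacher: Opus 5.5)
Your proof is correct, and it is the standard Hopf-algebra verification (dual bases for bijectivity, and $\Delta\circ S = (S\otimes S)\circ\Delta^{\mathrm{op}}$ lining up with the factor swap in $\phi_2$ for equivariance), which is exactly what the paper leaves to \cite{kassel} without writing out. You are also right to point out two hypotheses the paper's statement leaves implicit. $A$ must be a Hopf algebra, not merely a coalgebra, since the dual action uses an antipode. And $V$ and $W$ must be finite dimensional for $\phi_2$ to be surjective; this holds in the paper's application.
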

Finally, we prove one specific lemma for $\bU$. 
\begin{lem} \label{lem3}
There is an isomorphism of $\bU$-modules $\CC^2 \cong (\CC^2)^*$ given by $\phi_3\colon v^+ \mapsto v_-$ and $\phi_3\colon v^- \mapsto -q^{-1}v_+$. 
\end{lem}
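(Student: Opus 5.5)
The plan is a direct verification. Here $v^+, v^-$ denote the basis of $(\CC^2)^*$ dual to $v_+, v_-$, and $\phi_3$ is read as the linear map $(\CC^2)^* \to \CC^2$ given on this basis. This map sends a basis to a basis up to the nonzero scalar $-q^{-1}$, so it is a linear isomorphism. It therefore remains to check that $\phi_3$ intertwines the actions of the generators $K^{\pm1}$, $E$, $F$. Here $(\CC^2)^*$ carries the dual action $(a\cdot\alpha)(v)=\alpha(S(a)v)$ from the lemma above, and $\CC^2$ carries the action of \cite{kassel}. Since a linear map commuting with the generators commutes with all of $\bU$, this suffices.

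The first step is to compute the dual action on $v^\pm$ explicitly. We use $S(K^{\pm1})=K^{\mp1}$, $S(E)=-KE$ and $S(F)=-FK^{-1}$. To get $a\cdot v^{\pm}$ we evaluate $v^{\pm}(S(a)v_+)$ and $v^{\pm}(S(a)v_-)$.
\begin{itemize}
\item For $K$: $K^{-1}v_\pm = q^{\mp1}v_\pm$ gives $K\cdot v^+ = q^{-1}v^+$ and $K\cdot v^- = q\,v^-$, and similarly for $K^{-1}$.
\item For $E$: $-KEv_- = -q\,v_+$ and $-KEv_+=0$ give $E\cdot v^+ = -q\,v^-$ and $E\cdot v^- = 0$.
\item For $F$: $-FK^{-1}v_+ = -q^{-1}v_-$ and $-FK^{-1}v_-=0$ give $F\cdot v^+ = 0$ and $F\cdot v^- = -q^{-1}v^+$.
\end{itemize}

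The second step is to compare with the action on the images. We have $\phi_3(v^+)=v_-$, which has $K$-eigenvalue $q^{-1}$, and $\phi_3(v^-)=-q^{-1}v_+$, which has eigenvalue $q$, so $K^{\pm1}$ is matched. For $E$:
\[
\phi_3(E\cdot v^+) = -q\cdot(-q^{-1}v_+) = v_+ = Ev_-, \qquad \phi_3(E\cdot v^-) = 0 = E(-q^{-1}v_+).
\]
For $F$:
\[
\phi_3(F\cdot v^+) = 0 = Fv_-, \qquad \phi_3(F\cdot v^-) = -q^{-1}v_- = F(-q^{-1}v_+).
\]
Thus $\phi_3$ is $\bU$-equivariant.

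There is no real obstacle beyond bookkeeping. The one point needing care is the use of the antipode, including the order of factors in $S(E)=-KE$ and $S(F)=-FK^{-1}$, when computing the dual action. The normalization $-q^{-1}$ in $\phi_3$ is exactly what makes the $E$ and $F$ checks come out consistently, so I would do those two computations first as a sanity check.
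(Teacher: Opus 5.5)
Your proposal is correct and takes the same route as the paper: the paper's proof just records the dual action $Kv^{\pm}=q^{\mp1}v^{\pm}$, $Ev^+=-qv^-$, $Ev^-=0$, $Fv^+=0$, $Fv^-=-q^{-1}v^+$ on $(\CC^2)^*$, and these are exactly the formulas you derive from the antipode. You also carry out the generator-by-generator equivariance check against the action on $\CC^2$, which the paper leaves implicit, and your computations there are right.
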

\begin{proof}
We can show that the following defines an action on $(\CC^2)^*$: 
\[Kv^{\pm} = q^{\mp 1}v^\pm, K^{-1}v^{\pm} = q^{\pm}v^{\pm}\]
\[Ev^+ = -qv^-, Ev^- = 0\]
\[Fv^+ = 0, Fv^- = -q^{-1}v^+. \qedhere\]
\end{proof}
With these, we can construct an isomorphism $\End((\CC^2)^{\otimes n}) \cong (\CC^2)^{\otimes 2n}$ as follows. 
\[\begin{tikzcd}
	{\mathrm{End}((\CC^2)^{\otimes n})} & {(\CC^2)^{\otimes n} \otimes ((\CC^2)^{\otimes n})^*} & {(\CC^2)^{\otimes n} \otimes ((\CC^2)^*)^{\otimes n}} & {(\CC^2)^{\otimes 2n}}
	\arrow["{\varphi_1}", from=1-1, to=1-2]
	\arrow["{\varphi_2}", from=1-2, to=1-3]
	\arrow["{\varphi_3}", from=1-3, to=1-4]
\end{tikzcd}\]

Let $\phi_1$, denote the isomorphism defined in Lemma~\ref{lem1} with $V = W = (\CC^2)^{\otimes n}$, $A = \bU$. Let $\phi_2$ be the extension of the isomorphism in Lemma~\ref{lem2} to $n$ components, with each component being $\CC^2$ and $A = \bU$. Let $\phi_3$ be the isomorphism of the same name in Lemma~\ref{lem3}. 

\begin{prop} 
There is a isomorphism of $\bU$-modules $\End((\CC^2)^{\otimes n}) \cong (\CC^2)^{\otimes 2n}$. 
\end{prop}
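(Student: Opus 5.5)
The plan is to compose the three isomorphisms in the displayed diagram, $\varphi_3\circ\varphi_2\circ\varphi_1$, and check that each is an isomorphism of $\bU$-modules. A composition of $\bU$-module isomorphisms is again one, so this suffices. Throughout, $\bU$ acts on tensor products through the comultiplication $\bar\Delta$, on duals through the antipode $S$, and on $\End((\CC^2)^{\otimes n})=\Hom(V,V)$ through the identification of Lemma~\ref{lem1}. Equivalently, this is the usual action $(a\cdot f)=\sum a_{(1)}\circ f\circ S(a_{(2)})$.

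\textbf{Step 1.} Apply Lemma~\ref{lem1} with $V=W=(\CC^2)^{\otimes n}$ and $A=\bU$. This gives $\varphi_1^{-1}\colon (\CC^2)^{\otimes n}\otimes((\CC^2)^{\otimes n})^*\to\End((\CC^2)^{\otimes n})$, which is a $\bU$-isomorphism; hence so is $\varphi_1$. Finite-dimensionality guarantees that $\phi_1$ is bijective: both sides have dimension $2^{2n}$, and $\phi_1$ sends the basis $w\otimes\alpha$ to rank-one maps spanning $\Hom$.

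\textbf{Step 2.} Build $\varphi_2$ by induction on $n$, iterating Lemma~\ref{lem2}. Write $(\CC^2)^{\otimes n}=(\CC^2)^{\otimes (n-1)}\otimes\CC^2$. Lemma~\ref{lem2} gives $(\CC^2)^*\otimes((\CC^2)^{\otimes(n-1)})^*\cong((\CC^2)^{\otimes n})^*$. The inductive hypothesis identifies $((\CC^2)^{\otimes(n-1)})^*$ with $((\CC^2)^*)^{\otimes(n-1)}$. Tensoring by an identity preserves $\bU$-linearity, because $\bar\Delta$ is an algebra map. The resulting isomorphism reverses the order of the tensor factors, which is forced by $S$ being an anti-coalgebra map. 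We then tensor with $\id_{(\CC^2)^{\otimes n}}$ on the left.

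\textbf{Step 3.} Apply Lemma~\ref{lem3} factorwise: $\varphi_3=\id^{\otimes n}\otimes\phi_3^{\otimes n}$. A tensor product of $\bU$-module isomorphisms is a $\bU$-module isomorphism, again because $\bar\Delta$ is an algebra homomorphism.

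The main obstacle is Lemma~\ref{lem3}, whose proof in the paper only records the dual action. To complete it, I would compute $(a\cdot\alpha)(v)=\alpha(S(a)v)$ for $a=K^{\pm1},E,F$ using $S(E)=-KE$ and $S(F)=-FK^{-1}$. I would then check on the basis $v^\pm$ that $\phi_3$ intertwines each generator with its action on $\CC^2$. For example, $\phi_3(Ev^-)=0$ must match $E(-q^{-1}v_+)=0$, and $\phi_3(Fv^-)$ must match $F(-q^{-1}v_+)=-q^{-1}v_-$. The sign and power of $q$ in $\phi_3(v^-)=-q^{-1}v_+$ are chosen precisely so that these checks succeed. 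This choice is also compatible with the maps $\eps$ and $\delta$ defined earlier.

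A secondary point to verify is that the order reversal in Step 2 interacts consistently with $\bar\Delta$. I would check this directly for $E$ on $V\otimes W$, using $\bar\Delta(E)=E\otimes1+K^{-1}\otimes E$ and the antipode identity $\sum S(a_{(2)})\otimes S(a_{(1)})=\bar\Delta(S(a))$.
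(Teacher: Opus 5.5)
Your proposal is correct and follows the paper's proof: compose $\varphi_1$ from Lemma~\ref{lem1}, the $n$-fold iterate $\varphi_2$ of Lemma~\ref{lem2}, and $\varphi_3$ applied factorwise from Lemma~\ref{lem3}. You also supply verifications the paper leaves implicit: the bijectivity of $\varphi_1$, the inductive order-reversing construction of $\varphi_2$, and the generator-by-generator intertwining check for $\varphi_3$, which does go through with $S(E)=-KE$ and $S(F)=-FK^{-1}$.
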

\begin{proof}
First, we apply $\phi_1$ to $\End((\CC^2)^{\otimes n})$, obtaining $(\CC^2)^{\otimes n} \otimes ((\CC^2)^{\otimes n})^*$, which we apply $\phi_2$ to to get $(\CC^2)^{\otimes n} \otimes ((\CC^2)^*)^{\otimes n}$. Finally, we apply $\phi_3$ to each of the last $n$ components to obtain $(\CC^2)^{\otimes 2n}$. 
\end{proof}

\begin{cor}\label{bij}
There is an isomorphism $\End_\bU((\CC^2)^{\otimes n}) \cong ((\CC^2)^{\otimes 2n})^\bU$. 
\end{cor}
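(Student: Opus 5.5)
The plan is to take $\bU$-invariants on both sides of the $\bU$-module isomorphism $\Phi = \phi_3 \circ \phi_2 \circ \phi_1\colon \End((\CC^2)^{\otimes n}) \to (\CC^2)^{\otimes 2n}$ from the preceding proposition. The proposition already says $\Phi$ intertwines the $\bU$-actions, where $\End((\CC^2)^{\otimes n}) \cong \Hom(V,W)$ carries the adjoint action
\[ (a\cdot f)(v) = \sum a_{(1)} f(S(a_{(2)}) v), \]
coming from Lemma~\ref{lem1} with $W = V = (\CC^2)^{\otimes n}$. Since any $\bU$-module isomorphism restricts to an isomorphism of invariant subspaces (both sides are defined by the same linear conditions $a \cdot x = \varepsilon(a)x$), it suffices to identify the invariants of the adjoint action with $\End_\bU((\CC^2)^{\otimes n})$.

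The key step is the standard Hopf-algebra fact that $f \in \Hom(V,W)$ is invariant under the adjoint action exactly when $f$ is a $\bU$-module map. I would check this in the usual way.

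For the forward direction, suppose $f$ commutes with $\bU$. Then
\[ \sum a_{(1)} f(S(a_{(2)})v) = \sum a_{(1)}S(a_{(2)}) f(v) = \varepsilon(a) f(v) \]
by the antipode axiom.

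For the converse, suppose $a \cdot f = \varepsilon(a) f$ for all $a$. Then
\[ a f(v) = \sum a_{(1)} f\bigl(S(a_{(2)}) a_{(3)} v\bigr) = \sum (a_{(1)}\cdot f)(a_{(2)}v) = f(av). \]

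Concretely for $\bU$, it is enough to check this on the generators $K^{\pm1}, E, F$ using the given $\bar\Delta$ and $S$. Invariance under $K$ means $f$ commutes with $K$. Invariance under $E$, combined with commuting with $K$, reduces to $Ef = fE$, and similarly for $F$.

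The main obstacle is a convention mismatch rather than any deep issue. Lemma~\ref{lem1} and Lemma~\ref{lem2} use a particular ordering of tensor factors together with the comultiplication $\bar\Delta$, so the action on $\Hom(V,W)$ induced via $\phi_1$ might come out with $S$ or $S^{-1}$, or with the factors flipped. I would first verify directly on the generators that the action transported through $\phi_1$ is the adjoint action written above. If it turns out to use $S^{-1}$ instead, the same argument applies verbatim with $S^{-1}$, since $\bU$ has an invertible antipode. The invariance-equals-equivariance argument only uses the antipode axioms, so it goes through either way.

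Putting these together, $\Phi$ restricts to an isomorphism $\End_\bU((\CC^2)^{\otimes n}) = \End((\CC^2)^{\otimes n})^\bU \cong ((\CC^2)^{\otimes 2n})^\bU$.
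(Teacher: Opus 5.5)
Your proposal is correct and takes the same route as the paper: restrict the $\bU$-module isomorphism $\phi_3\circ\phi_2\circ\phi_1$ to $\bU$-invariants. The paper leaves implicit the identification $\End((\CC^2)^{\otimes n})^\bU = \End_\bU((\CC^2)^{\otimes n})$, and your two-line invariance-equals-equivariance argument supplies it correctly. Your convention worry also resolves in your favor. With the paper's dual action $(a\cdot\alpha)(v)=\alpha(S(a)v)$, transporting through $\phi_1$ gives exactly the adjoint action $(a\cdot f)(v)=\sum a_{(1)}f(S(a_{(2)})v)$, so no $S^{-1}$ adjustment is needed.
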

\begin{proof}
This statement follows from the fact that the above proposition is an isomorphism of $\bU$-modules. 
\end{proof}

Note that $\Ind_{\TL_n \otimes \TL_n}^{\TL_{2n}}\CC_\mathrm{triv}$ is embedded in $(\CC^2)^{\otimes 2n}$ and $\End_\bU((\CC^2)^{\otimes n})$ is isomorphic to $\TL_n$ by quantum Schur-Weyl duality (see \cite{jimbo}). Finally, we can describe the isomorphism in Corollary~\ref{bij} diagrammatically. 
\begin{thm} \label{embed}
The isomorphism described in Corollary~\ref{bij} can be described as an embedding $\TL_{n} \hookrightarrow \Ind_{\TL_n \otimes \TL_n}^{\TL_{2n}}\CC_\mathrm{triv}$ by mapping a diagram $D$ to $D$ composed horizontally with $n$ vertical lines composed vertically with the unique diagram with $n$ ``stacked'' links in the top line and $n$ quasi-simple links on the bottom line in $\Ind_{\TL_n \otimes \TL_n}^{\TL_{2n}}\CC_\mathrm{triv}$. 
\end{thm}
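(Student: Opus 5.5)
The plan is to compute the isomorphism of Corollary~\ref{bij} directly on the diagrammatic basis of $\TL_n$, using the quantum Schur--Weyl identification $\TL_n\cong \End_\bU((\CC^2)^{\otimes n})$ given by the spin representation (Lemma~\ref{spinreplem}). Concretely, I would show that for any $f\in\End((\CC^2)^{\otimes n})$,
\[
\phi_3\circ\phi_2\circ\phi_1(f) \;=\; (f\otimes \id^{\otimes n})\bigl(\Omega_n\bigr),
\]
where $\Omega_n\in(\CC^2)^{\otimes 2n}$ is the image of $\id_{(\CC^2)^{\otimes n}}$ under $\phi_3\phi_2\phi_1$. This identity holds because $\phi_1$ is natural in $W$: postcomposing with $f$ corresponds to applying $f$ to the $W$-factor, and $\phi_2,\phi_3$ only touch the last $n$ factors. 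The statement then reduces to two claims. The first is that $\Omega_n$ is the image, in $(\CC^2)^{\otimes 2n}$, of the diagram $N_n$ with $n$ stacked top links and $n$ bottom quasi-simple links acting on $v_-^{\otimes n}\otimes v_+^{\otimes n}$. The second is that $(D\otimes \id^{\otimes n})\circ N_n$ is exactly the diagram described in Theorem~\ref{embed}, which holds by definition of horizontal and vertical composition in the category $\TL$.

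For the first claim I would compute $\Omega_n$ explicitly. The map $\phi_1^{-1}$ sends the identity to $\sum_{\mathbf s} v_{\mathbf s}\otimes v^{\mathbf s}$, summing over sign sequences $\mathbf s$. Then $\phi_2$ reverses the order of the dual factors, and $\phi_3$ replaces $v^+\mapsto v_-$ and $v^-\mapsto -q^{-1}v_+$. For $n=1$ this gives $v_+\otimes v_- - q^{-1}v_-\otimes v_+=\delta(1)$, which is the action of the single cap diagram. For general $n$, the reversal in $\phi_2$ makes the pairing nested: the $i$th factor is paired with the $(2n+1-i)$th. Hence $\Omega_n=\delta^{(n)}(1)$, the $n$-fold nested cap $\delta_{n}^{2n}\cdots\delta_1^{2}(1)$. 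The bottom cups of $N_n$ evaluate $v_-^{\otimes n}\otimes v_+^{\otimes n}$ to $1$ via $\eps(v_-\otimes v_+)=1$, exactly as in the proof of Theorem~\ref{diagramdualcomp}. So $N_n$ acts to give $\Omega_n$, and by Proposition~\ref{tlactionprop} this is the dual canonical basis element $v_+^{\heart n}\heart v_-^{\heart n}$.

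Next I would check that the resulting map lands in, and is an isomorphism onto, the stated target. Each composite $(D\otimes\id^{\otimes n})N_n$ has $n$ bottom links joining vertex $n-i$ to $n+1+i$. Therefore it is not killed in $\Ind_{\TL_n\otimes\TL_n}^{\TL_{2n}}\CC_\mathrm{triv}$, and it is a diagrammatic basis element. Its bottom is fixed, and its top is the $2n$-point cup diagram obtained by bending the bottom of $D$ up to the right. This is the standard bijection between $(n,n)$-diagrams and $2n$-point cup diagrams with all points capped, so the map is injective on bases. Its image consists exactly of the diagrams with no defects. By Lemma~\ref{tlactionlem} and the equivariance of the construction, these span the $\bU$-invariants, and dimensions match (Catalan numbers on both sides). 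So the embedding realizes the isomorphism of Corollary~\ref{bij}.

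The main obstacle I expect is the sign and order bookkeeping in $\Omega_n$. One has to check that the reversal in $\phi_2$ together with the factor $-q^{-1}$ in $\phi_3$ reproduces precisely $\delta$ on each nested pair, not its reverse $v_-\otimes v_+ - q^{-1}v_+\otimes v_-$. I would handle this by induction on $n$, peeling off the outermost pair: write $\id_{(\CC^2)^{\otimes n}}=\id_{\CC^2}\otimes\id_{(\CC^2)^{\otimes n-1}}$ and note that $\phi_2$ places the new dual factor last. The remaining steps are formal.
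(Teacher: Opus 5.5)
Your proposal is correct and follows essentially the paper's argument. The paper likewise shows that the bottom stacked diagram sends $v_-^{\otimes n}\otimes v_+^{\otimes n}$ to the sum of anti-symmetrical vectors weighted by $-q^{-1}$ for each $v_-$ in the first half (your $\Omega_n$), that $D\otimes\id^{\otimes n}$ then acts on the first $n$ factors, and that $\phi_3\phi_2\phi_1$ produces the same vector from $\sum_i (D\cdot v_i)\otimes v_i^*$. Your naturality formulation of $\phi_1$, your inductive check of the nested pairing and signs, and your verification that the composites are exactly the defect-free basis diagrams of the induced module make explicit steps that the paper leaves implicit.
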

\begin{ex}
In the embedding described above, the procedure described in Theorem~\ref{embed} is shown in Figure~\ref{fig:cor1.10}. 
\begin{figure}[!ht]
\centering
\begin{tikzpicture}[scale=.25, baseline=(current bounding box.north)]
    \node at (1,9){$\bullet$};
    \node at (5,9){$\bullet$};
    \node at (9,9){$\bullet$};
    \node at (1,0){$\bullet$};
    \node at (5,0){$\bullet$};
    \node at (9,0){$\bullet$};
\draw (0,0) -- (10,0);
\draw (0,9) -- (10,9);
\draw (1,0) arc (180:0:2);
\draw (9,9) arc (360:180:2);
\draw (1,9) .. controls (1,1) and (9,8) .. (9,0);
\end{tikzpicture}
\hspace{1mm}
\begin{tikzpicture}[scale=.25, baseline=(current bounding box.north)]
\node at (0, 0){};
\node at (0, -4.5){$\rightarrow$};
\end{tikzpicture}
\hspace{1mm}
\begin{tikzpicture}[scale=.25, baseline=(current bounding box.north)]
\node at (1,9){$\bullet$};
\node at (5,9){$\bullet$};
\node at (9,9){$\bullet$};
\node at (13,9){$\bullet$};
\node at (17,9){$\bullet$};
\node at (21,9){$\bullet$};
\node at (1,0){$\bullet$};
\node at (5,0){$\bullet$};
\node at (9,0){$\bullet$};
\node at (13,0){$\bullet$};
\node at (17,0){$\bullet$};
\node at (21,0){$\bullet$};
\node at (1,18){$\bullet$};
\node at (5,18){$\bullet$};
\node at (9,18){$\bullet$};
\node at (13,18){$\bullet$};
\node at (17,18){$\bullet$};
\node at (21,18){$\bullet$};
\draw (0,0) -- (22,0);
\draw (0, 9)--(22,9);
\draw (0, 18)--(22,18);
\draw (13,9) arc (360:180:2);
\begin{scope}
\clip (0,0) rectangle (22,9);
\draw (11,9) ellipse (6 and 3);
\end{scope}
\begin{scope}
\clip (0,0) rectangle (22,9);
\draw (11,9) ellipse (10 and 4);
\end{scope}
\draw (9, 0) arc (180:0:2);
\begin{scope}
\clip (0,0) rectangle (22,9);
\draw (11,0) ellipse (6 and 3);
\end{scope}
\begin{scope}
\clip (0,0) rectangle (22,9);
\draw (11,0) ellipse (10 and 4);
\end{scope}
\draw (1,9) arc (180:0:2);
\draw (9,18) arc (360:180:2);
\draw (1,18) .. controls (1,10) and (9,17) .. (9,9);
\draw (13,9)--(13,18);
\draw (17,9)--(17,18);
\draw (21,9)--(21,18);
\end{tikzpicture}
\hspace{1mm}
\begin{tikzpicture}[scale=.25, baseline=(current bounding box.north)]
\node at (0, 0){};
\node at (0, -4.5){$\rightarrow$};
\end{tikzpicture}
\hspace{1mm}
\begin{tikzpicture}[scale=.25, baseline=(current bounding box.north)]
\node at (1,9){$\bullet$};
\node at (5,9){$\bullet$};
\node at (9,9){$\bullet$};
\node at (13,9){$\bullet$};
\node at (17,9){$\bullet$};
\node at (21,9){$\bullet$};
\node at (1,0){$\bullet$};
\node at (5,0){$\bullet$};
\node at (9,0){$\bullet$};
\node at (13,0){$\bullet$};
\node at (17,0){$\bullet$};
\node at (21,0){$\bullet$};
\draw (0,0) -- (22,0);
\draw (0, 9)--(22,9);
\draw (9,9) arc (360:180:2);
\begin{scope}
\clip (0,0) rectangle (22,9);
\draw (7,9) ellipse (6 and 4);
\end{scope}
\draw (21,9) arc (360:180:2);
\draw (9, 0) arc (180:0:2);
\begin{scope}
\clip (0,0) rectangle (22,9);
\draw (11,0) ellipse (6 and 4);
\end{scope}
\begin{scope}
\clip (0,0) rectangle (22,9);
\draw (11,0) ellipse (10 and 5);
\end{scope}
\end{tikzpicture}
\caption{An example of the embedding of $\TL_3$ into $\Ind_{\TL_3 \otimes \TL_3}^{\TL_6}\CC_\mathrm{triv}$. }
\label{fig:cor1.10}
\end{figure}
\end{ex}
Given a vector $v = v_{k_n} \otimes v_{k_{n-1}} \otimes \dots \otimes v_{k_1}$, where $k_i$ is $+$ or $-$ for all $i$, let $v^a$ denote $v_{-k_1} \otimes \dots \otimes v_{-k_{n-1}} \otimes v_{-k_n}$. A vector of the form $v^a \otimes v$ is called \emph{anti-symmetrical}. 
\begin{proof}[Proof of Theorem~\ref{embed}]
Let $D$ be a diagram in $\TL_n$ and let $D' = D_T \circ D_B$ be the corresponding diagram in $\Ind_{\TL_n \otimes \TL_n}^{\TL_n} \CC_\mathrm{triv}$ split into the composition as described. We prove that the action of $D'$ gives the same vector in $(\CC^2)^{\otimes 2n}$ as the identification of $D$ using Proposition~\ref{bij}. 

First, we describe the action of $D'$. The diagram $D_B$ acting on $\underbrace{v_- \otimes \dots \otimes v_-}_n \otimes \underbrace{v_+ \otimes \dots \otimes v_+}_n$ gives a sum of all anti-symmetrical vectors with $2n$ components. Furthermore, there is a factor of $-q^{-1}$ for each $v_-$ in the first $n$ components, or each $v_+$ in the last $n$ components. Finally, $D_T$ acts on $D_B \cdot \underbrace{v_- \otimes \dots \otimes v_-}_n \otimes \underbrace{v_+ \otimes \dots \otimes v_+}_n$ by replacing each antisymmetrical vector $v^a \otimes v$ with $(D \cdot v^a)\otimes v$. 

Now, we compute the embedding of $D$ into $(\CC^2)^{\otimes 2n}$ following the proof of Proposition~\ref{bij}. The embedding into $(\CC^2)^{\otimes n} \otimes ((\CC^2)^{\otimes n})^*$ is of the form $\sum_{i = 1}^{2^n} a_i (D \cdot v_i) \otimes v_i^*$, where $v_i$ is a standard basis vector of $(\CC^2)^{\otimes n}$, $v_i^*$ is taken with respect to the standard basis, and $a_i \in \CC$. Applying $\phi_2$ and $\phi_3$ replaces the last $n$ components with $v^a$ and creates a factor of $-q^{-1}$ for each $v_+$ in the last $n$ components. Note that this yields the same vector in $(\CC^2)^{\otimes n}$ as above.
\end{proof}

\section{The Dual Canonical and Canonical Basis Through the Hecke Algebra}\label{dualthroughhecke}
This section is a technical section that will be needed for the description of the canonical basis in Section~\ref{canonicalsec}. The results of this section are extracted from the work of Lusztig \cite{lusztig}. 
\subsection{Background for Canonical Basis}
Other than the dual canonical basis, $(\CC^2)^{\otimes n}$ has another important basis called the canonical basis, which satisfies the same upper triangularity property as the dual canonical basis and is self dual under another involution. The canonical basis element with leading term $x_{k_n} \otimes x_{k_{n-1}} \otimes \dots \otimes x_{k_1}$ is labelled by $x_{k_n} \dia x_{k_{n-1}} \dia \dots \dia x_{k_1}$. 

The dual canonical and canonical basis are related to the Hecke algebra by maps that send the spherical and aspherical basis to the dual canonical and canonical basis of $(\CC^2)^{\otimes n}$, respectively. 

In \cite{khovanov}, Khovanov defines the map $\widetilde{Seq}$ mapping $W^J$, the set of minimal coset elements of $W/W_J$, where $W_J$ is the Coxeter group generated by $S_k \times S_{n-k}$, to $(\CC^2)^{\otimes n}$ by sending $1$ to $\underbrace{v_- \otimes \dots \otimes v_-}_k \otimes \underbrace{v_+ \otimes \dots \otimes v_+}_{n-k}$ and having $s_i$ permute the $i$th and $(i+1)$th components in $\widetilde{Seq}(1)$. Furthermore, we abuse notation and define another map $\widetilde{Seq}: \mM \rightarrow (\CC^2)^{\otimes n}$ on the basis vectors by $\widetilde{Seq}(M_w) = \widetilde{Seq}(w)$. 

\begin{prop}[\cite{khovanov}] \label{sphericalcanonical}
The map $\widetilde{Seq}$ takes the canonical basis element $\underline M_w$ to the dual canonical basis element labeled by $\widetilde{Seq}(w)$. 
\end{prop}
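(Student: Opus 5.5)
The plan is to identify $\widetilde{Seq}$ with the composite $\mM \xrightarrow{\sim} \TLinduced \xrightarrow{b} (\CC^2)^{\otimes n}$ built in Section~\ref{bijectionsection}. We already know that this composite sends canonical basis elements to dual canonical basis elements, by Lemma~\ref{basisidentification} and Proposition~\ref{fkk}. That composite is the $\mH$-module map $\iota$ with $\iota(h\otimes 1)=h\cdot v_0$, where
\[
v_0=\underbrace{v_-\otimes\dots\otimes v_-}_k\otimes\underbrace{v_+\otimes\dots\otimes v_+}_{n-k}.
\]
It therefore suffices to show two things: the labelling given by $\widetilde{Seq}$ on standard basis elements agrees with the leading terms of $\iota$, and $\iota(\underline M_w)$ is the dual canonical element labelled by $\widetilde{Seq}(w)$.

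First step, the leading terms. For $w\in W^J$, choose a reduced word $w=s_{i_1}\cdots s_{i_r}$; then $M_w=H_{s_{i_1}}\cdots H_{s_{i_r}}\otimes 1$. I compute $H_{s_i}$ acting on a pure tensor whose $i$th and $(i+1)$th components are $v_-\otimes v_+$. Using $H_i=q^{-1/2}\check R_{11}=\delta\circ\eps+q^{-1}\id$ on those components, this gives $v_+\otimes v_- + (\text{lower terms})$, where the lower term is a multiple of $v_-\otimes v_+$ with coefficient in $q^{-1}\ZZ[q^{-1}]$ (the $-q$ from $\eps$ times $-q^{-1}$ from $\delta$ cancels against $q^{-1}$ appropriately).

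Because $w$ is minimal in its coset, each successive simple reflection in the reduced word swaps a $v_-$ past a $v_+$ in the correct direction. By induction on $\ell(w)$ this shows
\[
\iota(M_w)\in \widetilde{Seq}(w)+\sum q^{-1}\ZZ[q^{-1}]\,(\text{standard vectors in }(\CC^2)^{\otimes n}_k),
\]
and moreover the extra vectors correspond to $\widetilde{Seq}(y)$ with $y\prec w$.

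Second step, the canonical elements. Write $\underline M_w\in M_w+\sum_{y\prec w}q^{-1}\ZZ[q^{-1}]M_y$. Applying $\iota$ and the first step, $\iota(\underline M_w)$ has leading term $\widetilde{Seq}(w)$ with coefficient $1$, and all other coefficients lie in $q^{-1}\ZZ[q^{-1}]$. We already know that $\iota(\underline M_w)$ is some dual canonical element $v_{k_n}\heart\dots\heart v_{k_1}$. By condition (ii) of the definition, that element has coefficient $1$ only on its own label and coefficients in $q^{-1}\ZZ[q^{-1}]$ on every other standard vector. Comparing the unique coefficient equal to $1$ forces the label to be $\widetilde{Seq}(w)$. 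This is exactly the argument used in the lemma of Section~\ref{diagrambij}.

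The main obstacle is the first step: making the sign and power conventions of $H_i$, $\delta$ and $\eps$ match so that the leading coefficient is exactly $1$ rather than some power of $q$. One also has to check the left-to-right convention, namely that $s_i$ acts on components $i,i+1$ consistently with how $\widetilde{Seq}$ permutes positions. I would first verify the case $n=2$, $k=1$ by hand and then run the induction on $\ell(w)$.
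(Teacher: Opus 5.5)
\textbf{Verdict: genuine gap, easily repaired.} The paper gives no proof of this proposition; it only quotes it from Khovanov. Your idea of deriving it from Proposition~\ref{fkk}, by identifying $\widetilde{Seq}$ with the $\mH$-module map $\iota\colon h\otimes 1\mapsto h\cdot v_0$, is a sensible and genuinely different route. Once fixed, it shows that the Khovanov statement and the FKK statement are the same fact. As written, though, the reduction does not go through.

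The proposition concerns the linear map $\widetilde{Seq}$ defined exactly by $M_w\mapsto\widetilde{Seq}(w)$. So if $\underline M_w=\sum_y p_{y,w}M_y$, the vector in question is $\widetilde{Seq}(\underline M_w)=\sum_y p_{y,w}\widetilde{Seq}(y)$. Your second step instead identifies $\iota(\underline M_w)=\sum_y p_{y,w}\,\iota(M_y)$. Your first step only gives $\iota(M_y)\in\widetilde{Seq}(y)+(\text{lower terms})$. If those lower terms were nonzero, the two vectors would differ. Knowing that $\iota(\underline M_w)$ is the dual canonical element labelled $\widetilde{Seq}(w)$ would then tell you nothing about $\widetilde{Seq}(\underline M_w)$. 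So ``it therefore suffices'' is a non sequitur. The obstacle you single out, getting the leading coefficient to be exactly $1$, is not the real one: you need $\iota(M_w)=\widetilde{Seq}(w)$ on the nose, i.e.\ $\iota=\widetilde{Seq}$ as linear maps.

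\textbf{The fix.} This equality is true, and your own computation shows it once it is done with the definitions of $\eps,\delta$ in Section~\ref{tlaction}. There $\eps(v_-\otimes v_+)=1$, not $-q$. So $H_{s_i}=\delta\circ\eps+q^{-1}\id$ sends $v_-\otimes v_+$ to $(v_+\otimes v_- - q^{-1}v_-\otimes v_+)+q^{-1}v_-\otimes v_+=v_+\otimes v_-$. This is an exact swap with zero correction term.

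Your parenthetical seems to use the other convention, from the remark after Lemma~\ref{spinreplem}, where $\eps(v_-\otimes v_+)=-q$. The paper's two descriptions of $\eps,\delta$ disagree. In that convention the correction term is $(q^{-1}-q)\,v_-\otimes v_+$, which is not in $q^{-1}\ZZ[q^{-1}]$. So the weak intermediate claim you aimed for is not even robust.

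With the correct convention, the induction runs as follows. Every right factor of a reduced word for $w\in W^J$ again lies in $W^J$. Each length-increasing step moves a $v_-$ rightward past an adjacent $v_+$. Induction on $\ell(w)$ therefore gives $H_w\cdot v_0=\widetilde{Seq}(w)$ exactly.

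It follows that $\widetilde{Seq}(\underline M_w)=\iota(\underline M_w)$, which is a dual canonical element by Proposition~\ref{fkk}. It has coefficient $1$ on $\widetilde{Seq}(w)$ and coefficients in $q^{-1}\ZZ[q^{-1}]$ elsewhere. Your coefficient comparison then finishes the argument.
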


Let $Seq$ be defined as $Seq\colon w \mapsto (-1)^{\ell(w)}\widetilde{Seq}(w)$. This map is similar to the map $Seq$ in \cite{khovanov}, but we define it differently for the purposes of this paper. 

\begin{prop}[\cite{khovanov}]
The map $Seq$ takes the canonical basis element $(-1)^{\ell(w)}\underline N_w$ to the canonical basis element labeled by $\widetilde{Seq}(w)$. 
\end{prop}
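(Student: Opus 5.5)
The plan is to characterize the canonical basis by its axioms and check that the images $Seq\bigl((-1)^{\ell(w)}\underline N_w\bigr)$ satisfy them. There are two: self-duality under the second involution $\psi$ on $(\CC^2)^{\otimes n}$, the one defining the $\dia$-basis, and upper triangularity with coefficients in $q^{-1}\ZZ[q^{-1}]$ off the leading term. By the uniqueness part of the defining theorem for the canonical basis, these two facts identify the image with $x_{k_n}\dia\dots\dia x_{k_1}$, where $x_{k_n}\otimes\dots\otimes x_{k_1} = \widetilde{Seq}(w)$. Throughout, the map $Seq\colon \mN \to (\CC^2)^{\otimes n}_k$ is extended linearly from $N_w \mapsto (-1)^{\ell(w)}\widetilde{Seq}(w)$, in analogy with the abuse of notation for $\widetilde{Seq}$ on $\mM$.

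\textbf{Step 1: leading term and triangularity.} Write $\underline N_w = N_w + \sum_{y\prec w} r_{y,w} N_y$ with $r_{y,w}\in q^{-1}\ZZ[q^{-1}]$. Then
\[
Seq\bigl((-1)^{\ell(w)}\underline N_w\bigr) = \widetilde{Seq}(w) + \sum_{y\prec w} (-1)^{\ell(w)+\ell(y)} r_{y,w}\,\widetilde{Seq}(y).
\]
The signs do not affect membership in $q^{-1}\ZZ[q^{-1}]$. Bruhat order on $W^J$ translates under $\widetilde{Seq}$ into the partial-sum dominance order appearing in condition (iii) of the dual canonical basis theorem: moving a $v_-$ to the left corresponds to lengthening $w$. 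This gives the triangularity condition.

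\textbf{Step 2: self-duality, the main obstacle.} I need the intertwining relation $Seq\circ\overline{\,\cdot\,} = \psi\circ Seq$ on $\mN$. First I would check that $Seq$ intertwines the $\mH$-action on $\mN$ with a twisted action on $(\CC^2)^{\otimes n}_k$, namely the composition of the $\mH$-action with the automorphism $H_s\mapsto -H_s^{-1}$, or equivalently $q\mapsto -q^{-1}$ as in Remark~\ref{altrem}. This is a generator-by-generator computation, using $H_s^{-1}=H_s+q-q^{-1}$ and the explicit matrix of $q^{-1/2}\check R_{11}$. Once that holds, both involutions are antilinear and compatible with the respective actions, and both fix the generating vector: $\overline{N_1}=N_1$, and $\psi$ fixes $v_-^{\otimes k}\otimes v_+^{\otimes n-k}$. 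Since $N_1$ is cyclic, this forces the intertwining relation.

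The delicate part is pinning down $\psi$ concretely enough to verify its compatibility with $H_s$. If a direct check is awkward, I would instead derive it from the known relation for $\mM$, namely Proposition~\ref{sphericalcanonical}. Under the standard duality, the canonical basis is dual to the dual canonical basis with respect to a form pairing $v_\pm$ with $v_\pm$. Correspondingly, $\mN$ is dual to $\mM$ via a pairing with $\langle M_x, N_y\rangle$ unitriangular (Kazhdan–Lusztig inversion). Transporting that pairing along $\widetilde{Seq}$ and $Seq$ would identify the images of the $\underline N_w$ as the dual basis to the $\heart$-basis. That is precisely the canonical basis.

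Combining Steps 1 and 2 with uniqueness finishes the proof.
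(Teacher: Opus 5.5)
The paper gives no proof of this proposition; it is quoted from \cite{khovanov}. Your argument therefore has to stand on its own. Step~1 is fine, apart from a reversed parenthetical: since $\widetilde{Seq}(1)=v_-^{\otimes k}\otimes v_+^{\otimes n-k}$, lengthening $w$ moves $v_-$'s to the \emph{right}. This is harmless, because Proposition~\ref{sphericalcanonical} already matches $\prec$ with the triangularity order. The genuine gap is Step~2, which is the whole content of the statement.

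First, the intertwining you name is false. $H_{s_k}$ acts on factors $k,k+1$ by $\delta\circ\eps+q^{-1}$, which sends $v_-\otimes v_+$ to $v_+\otimes v_-$. So with $v_0=v_-^{\otimes k}\otimes v_+^{\otimes n-k}$ you get $-H_{s_k}^{-1}v_0=-\widetilde{Seq}(s_k)-(q-q^{-1})v_0$, whereas $Seq(H_{s_k}N_1)=-\widetilde{Seq}(s_k)$. Substituting $q\mapsto -q^{-1}$ in the matrices instead yields $+\widetilde{Seq}(s_k)$, which has the wrong sign. Going through the cases $s_iw>w$ with $s_iw\in W^J$, $s_iw\notin W^J$, and $s_iw<w$, one finds that $Seq$ is equivariant for $H_i\mapsto-\bigl(\text{matrix of } H_i \text{ with } q \text{ replaced by } q^{-1}\bigr)$. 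This is your twist $H_s\mapsto -H_s^{-1}$ conjugated by the global swap $v_+\leftrightarrow v_-$.

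Second, and more seriously, you never specify the involution $\psi$ defining the $\dia$-basis. The identity $\psi(\rho(h)v)=\rho(\bar h)\psi(v)$ for this twisted action $\rho$ is exactly what has to be proved; your cyclic-vector argument only propagates it once it is known. As written, self-duality of $Seq\bigl((-1)^{\ell(w)}\underline N_w\bigr)$ is assumed, not shown.

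Your fallback is the argument of Section~\ref{dualthroughhecke} run backwards. The paper deduces Theorem~\ref{khovanovdual} from this proposition plus Theorem~\ref{flipMN}. You would deduce the proposition from Theorem~\ref{khovanovdual} plus a Hecke-side duality, and that Hecke-side input is precisely what you leave out.

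Transporting the pairing of Theorem~\ref{khovanovdual} along $\widetilde{Seq}$ and $Seq$ gives $\langle M_w,N_x\rangle=(-1)^{\ell(x)}$ if $\widetilde{Seq}(w)$ is the reversal of $\widetilde{Seq}(x)$, and $0$ otherwise. This is a signed permutation matrix, not a unitriangular one. What you need is that $\{\underline M_w\}$ and $\{(-1)^{\ell(x)}\underline N_x\}$ are dual bases for this form after the reversal relabeling. That is a parabolic Kazhdan--Lusztig inversion theorem (essentially Theorem~\ref{flipMN}, which the paper extracts from \cite{lusztig} and \cite{tanisaki}). It does not follow from the standard-basis pairing.

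Granting it, together with Proposition~\ref{sphericalcanonical} and Theorem~\ref{khovanovdual} from \cite{khovanov}, the fallback does close by nondegeneracy of the form on each weight space. But then the result cannot be fed back into Section~\ref{dualthroughhecke} as an independent proof of Theorem~\ref{khovanovdual} without circularity.
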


Finally, we describe the most important property of the canonical basis, its duality to the dual canonical basis. 

\begin{thm}[\cite{khovanov}]\label{khovanovdual}
If we define a pairing in $(\CC^2)^{\otimes n}$ by \[\langle x_{k_n} \otimes x_{k_{n-1}} \otimes \dots \otimes x_{k_1}, x_{k_1'}\otimes \dots \otimes x_{k_{n-1}'} \otimes x_{k_n'} \rangle = \delta_{k_1, k_1'}\dots \delta_{k_{n-1}, k_{n-1}'} \delta_{k_n, k_n'},\] we also have that \[\langle x_{k_n} \heart x_{k_{n-1}} \heart \dots \heart x_{k_1}, x_{k_1'}\dia \dots \dia x_{k_{n-1}'} \dia x_{k_n'} \rangle = \delta_{k_1, k_1'}\dots \delta_{k_{n-1}, k_{n-1}'} \delta_{k_n, k_n'}.\]
\end{thm}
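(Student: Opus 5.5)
The plan is to move the whole statement into the Hecke algebra, where the bar involution is simple. There we introduce a pairing between the spherical module $\mM$ and the aspherical module $\mN$, and show that the two canonical bases are dual for it. This is the route announced in Section~\ref{dualthroughhecke}.

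Fix $k$ and work weight space by weight space. Each basis element $x_{k_n}\otimes\dots\otimes x_{k_1}$ lies in some $(\CC^2)^{\otimes n}_k$, and the pairing is zero between vectors with different numbers of $v_-$. So it suffices to prove the statement on $(\CC^2)^{\otimes n}_k$.

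By Proposition~\ref{sphericalcanonical}, $\widetilde{Seq}$ identifies $\mM$ with $(\CC^2)^{\otimes n}_k$ and sends $\underline M_w$ to the dual canonical element labelled $\widetilde{Seq}(w)$. Similarly, $Seq$ identifies $\mN$ with $(\CC^2)^{\otimes n}_k$ and sends $(-1)^{\ell(w)}\underline N_w$ to the canonical element with the same label. Transport the pairing of Theorem~\ref{khovanovdual} back along these maps to a bilinear form $(\cdot,\cdot)\colon \mM\times\mN\to\ZZ[q,q^{-1}]$. Up to the sign $(-1)^{\ell}$ and the reversal of tensor order built into the pairing, it is the standard form $(M_x, N_y)=\delta_{x,y}$ on the standard bases. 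The statement then becomes
\[
(\underline M_x,(-1)^{\ell(y)}\underline N_y)=\delta_{x,y}\quad\text{for } x,y\in W^J,
\]
after matching the index $y$ correctly (here I expect to have to compose with $w\mapsto w_0ww_{0,J}$ or a similar order-reversing bijection of $W^J$ that implements the reversal of tensor factors).

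The core is to prove this duality by the usual Lusztig argument.

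\emph{Step 1: adjointness.} Show that the form satisfies $(hM, N)=(M,h^\flat N)$ for a suitable anti-involution $\flat$ of $\mH$, for instance $H_w\mapsto H_{w^{-1}}$ composed with the twist that sends $\CC_{\mathrm{triv}}$ to $\CC_{\mathrm{sgn}}$. It suffices to check this on the generators $H_s$, where it is a short case analysis on whether $xs$ is longer or shorter than $x$, and whether it stays in $W^J$.

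\emph{Step 2: compatibility with bar.} Show that $\overline{(\bar M,\bar N)}=(M,N)$ holds up to the twist. One way is to show that $(\bar M_x, \bar N_y)$ is triangular with the correct leading terms, using $\bar H_x=H_{x^{-1}}^{-1}$.

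\emph{Step 3: conclusion.} Granting Steps 1 and 2, the elements $(\underline M_x,(-1)^{\ell(y)}\underline N_y)$ are bar-invariant. By the triangularity in Deodhar's theorem they lie in $\delta_{x,y}+q^{-1}\ZZ[q^{-1}]$, possibly after a sign twist $q\leftrightarrow -q^{-1}$ as in Remark~\ref{altrem}. A bar-invariant element of $q^{-1}\ZZ[q^{-1}]$ is $0$, so the pairing matrix is the identity. Transporting back via $\widetilde{Seq}$ and $Seq$ gives the displayed identity of Theorem~\ref{khovanovdual}.

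I expect the main obstacle to be Step 2, together with the sign and normalisation bookkeeping. The form must be set up so that bar-invariance holds exactly. This means the factors $(-1)^{\ell(w)}$ in $Seq$, the reversed ordering in the pairing, and the fact that $\underline N$ is defined with $q^{-1}\ZZ[q^{-1}]$ rather than $q\ZZ[q]$ all have to combine consistently.

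My first attempt would be to define the form on $\mN$ directly by $(M_x,N_y)=\delta_{x,y}$. I would then compute $(\bar M_x,\bar N_y)$ inductively on $\ell(x)$ using $H_s^{-1}=H_s+q-q^{-1}$, checking that the recursion for $\mM$ (eigenvalue $q^{-1}$) exactly mirrors that for $\mN$ (eigenvalue $-q$). If this fails, the fallback is to realise $\mN$ as the dual of $\mM$ twisted by the involution of Remark~\ref{altrem}, and to obtain $\underline N$ as the dual basis of $\underline M$ by uniqueness.
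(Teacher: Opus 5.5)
Your architecture is a valid alternative to the paper's route, and a more self-contained one. You transport the reversed pairing to a form $\mM\times\mN\to\ZZ[q,q^{-1}]$, prove it is bar-compatible, and finish because a bar-invariant element of $q^{-1}\ZZ[q^{-1}]$ is $0$. The paper instead imports the duality from $\mH$, using Lusztig's $q_{v,w}=p_{w_0v,w_0w}$ and Tanisaki's $\langle D_w,B_x\rangle=\pm\delta_{w,x}$, and pushes it to $\mM$ and $\mN$ through $\sigma$, $\sigma^*$, $\iota$ and $\flip$ in Theorem~\ref{flipMN}.

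However, Step 2 carries all the content, you do not prove it, and your suggestions for it would not work.
\begin{itemize}
\item Knowing that $(\bar M_x,\bar N_y)$ is triangular with the right leading terms does not give the exact identity $(\bar M_x,\bar N_y)=\overline{(M_x,N_y)}$ that Step 3 needs.
\item Your ``first attempt'' form $(M_x,N_y)=\delta_{x,y}$ is not bar-compatible at all. For $n=2$, $k=1$ one has $J=\emptyset$ and $\underline M_s=\underline N_s=H_s-q^{-1}$, so $(\underline M_s,\underline N_s)=1+q^{-2}$.
\item The reversal in the pairing is therefore not index bookkeeping. Transported honestly, the form is $(M_x,N_y)=(-1)^{\ell(y)}\delta_{y,r(x)}$. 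Here $r$ is the order-reversing bijection of $W^J$ induced by reversing tensor factors, so $r(e)=w_f$.
\end{itemize}

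The missing idea is to derive Step 2 from Step 1, and that requires the correct adjoint.
\begin{itemize}
\item In $(\CC^2)^{\otimes n}$, $H_{s_i}$ acts on the image of $\mM$ by $e_i+q^{-1}$.
\item $Seq$ transports the $\mN$-action to $H_{s_i}\mapsto -P(e_i+q)P$, where $P$ swaps factors $i$ and $i+1$.
\item For the reversed pairing, the adjoint of $e_i+q^{-1}$ is $P(e_{n-i}+q^{-1})P$.
\end{itemize}
Together these give $(H_{s_i}m,n)=(m,-H_{s_{n-i}}^{-1}n)$. So $\flat$ must include the diagram automorphism $s_i\mapsto s_{n-i}$. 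Your candidate, $H_w\mapsto H_{w^{-1}}$ plus the triv/sgn twist, is not an adjoint once $n\ge 3$.

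This $\flat$ commutes with the bar involution, since both sides send $H_s$ to $-H_{s'}$. Hence $(\bar hM_e,\bar n)=(M_e,\overline{\flat(h)n})$. It then suffices to know $(M_e,\bar n')=\overline{(M_e,n')}$. This holds because $(M_e,-)$ just reads off $(-1)^{\ell(w_f)}$ times the coefficient of $N_{w_f}$. Since $w_f$ is Bruhat-maximal in $W^J$ and $\bar N_y\in N_y+\sum_{z<y}\ZZ[q,q^{-1}]N_z$, bar does not change that coefficient except by conjugating scalars. This yields exact bar-compatibility with no twist, and your Step 3 then completes the proof.
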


In Khovanov's proof, he doesn't rely on the Hecke algebra, instead proving this duality directly in $(\CC^2)^{\otimes n}$. We prove this property by proving the analog for the canonical basis in the spherical and aspherical modules, which also give results about the computation of the canonical basis in $\mM^*$ and $\mN^*$, the dual spaces to the spherical and aspherical modules, that are independent from the spin representation. 
\subsection{Duality in the Hecke Algebra}
In this section, we prove Theorem~\ref{flipMN}, and show that it implies the duality between the dual canonical and canonical bases (Theorem~\ref{khovanovdual}). 

In order to prove facts about the canonical basis of $\mM^*$ and $\mN^*$, we must first give a relation between the canonical basis of $\mH$ and $\mH^*$, the dual space to the Hecke algebra. Let $w_0$ denote the longest element of $W$ and $w_{0, J}$ denote the longest element in $W_J$. Define $\flip\colon \mH \rightarrow \mH$ as $\flip\colon H_w \mapsto H_{w_0w}$, extended linearly. Let $w_f$ be the longest element in $W^J$. Define $\flip\colon \mN \rightarrow \mN$ and $\flip\colon \mM \rightarrow \mM$ by $\flip(M_w) = M_{w_fw}$ and $\flip(N_w) = N_{w_fw}$. 

\begin{thm} \label{flipMN}
We show that $\langle \flip(\underline M_{w_fw}), \underline N_{x} \rangle = (-1)^{\ell(x)}\delta_{x, w}$, with the pairing being defined as $\langle M_{w}, N_x \rangle = (-1)^{\ell(x)}\delta_{w, x}$.  
\end{thm}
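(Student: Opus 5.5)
We will prove Theorem~\ref{flipMN} with a uniqueness argument in the aspherical module: we show that the elements dual to $\flip(\underline M_{w_fw})$ under the pairing satisfy the defining properties of $\underline N_x$. Concretely, we first introduce the bilinear pairing $\langle M_w, N_x\rangle = (-1)^{\ell(x)}\delta_{w,x}$ between $\mM$ and $\mN$. We then check two compatibilities of this pairing. First, it intertwines the actions in the sense that $\langle h\cdot m, n\rangle = \langle m, h^\flat \cdot n\rangle$ for a suitable anti-involution $h\mapsto h^\flat$ of $\mH$; the candidate is $H_s \mapsto -H_s^{-1}$ composed with reversal of words, twisted by $\flip$. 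Second, after applying $\flip$ on the $\mM$ side, it is compatible with the bar involutions: $\langle \overline{\flip(m)}, \bar n\rangle = \overline{\langle \flip(m), n\rangle}$.

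\textbf{Step 1: the flip on $\mH$.} Verify that $\flip(H_w)=H_{w_0w}$ satisfies $\overline{\flip(h)} = \flip'(\bar h)$ up to the twist $H_{w_0}$. Concretely, use $H_{w_0 w} = H_{w_0}H_{w^{-1}}^{-1}$, which follows from the length additivity $\ell(w_0w)+\ell(w)=\ell(w_0)$. This rewrites $\flip(h)$ as $H_{w_0}\cdot(\text{something bar-related to } h)$, and shows how the bar involution transforms under $\flip$.

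\textbf{Step 2: descend to $\mM$ and $\mN$.} Transfer Step 1 to the induced modules. Here we use $w_f = w_0 w_{0,J}$, and the fact that $H_{w_{0,J}}$ acts on $\CC_\mathrm{triv}$ by $q^{-\ell(w_{0,J})}$ and on $\CC_\mathrm{sgn}$ by $(-q)^{\ell(w_{0,J})}$. This yields the invariance $\langle \overline{\flip(m)}, \bar n\rangle = \overline{\langle \flip(m), n\rangle}$.

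\textbf{Step 3: the uniqueness argument.} By Step 2, the dual basis $\{N^\vee_x\}$ to $\{\flip(\underline M_{w_fw})\}$ consists of bar-invariant elements. The triangularity $\underline M_{w_fw}\in M_{w_fw}+\sum_{y\prec w_fw} q^{-1}\ZZ[q^{-1}]M_y$ becomes, after $\flip$, unitriangular with off-diagonal coefficients in $q^{-1}\ZZ[q^{-1}]$. It is indexed by the reversed Bruhat order, since $y\mapsto w_fy$ is order-reversing on $W^J$. Inverting a unitriangular matrix whose off-diagonal entries lie in $q^{-1}\ZZ[q^{-1}]$ keeps the entries in $q^{-1}\ZZ[q^{-1}]$. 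Taking the transpose, which is what passing to the dual basis amounts to, reverses the order again, so $N^\vee_x \in (-1)^{\ell(x)}N_x + \sum q^{-1}\ZZ[q^{-1}] N_y$. Uniqueness in the theorem of \cite{deodhar} then gives $N^\vee_x = (-1)^{\ell(x)}\underline N_x$ up to the stated sign. This is exactly $\langle\flip(\underline M_{w_fw}),\underline N_x\rangle=(-1)^{\ell(x)}\delta_{x,w}$.

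The main obstacle is Step 2: pinning down exactly how the bar involution interacts with $\flip$ and the pairing, including the scalar powers of $q$ and the signs $(-1)^{\ell}$ coming from $H_{w_{0,J}}$. I would first test the claimed identity on small cases, $n=2$ and $n=3$ with $k=1$, to fix the correct twist. Only then would I prove it generally on the generators $H_s$ via the quadratic relation $H_s^{-1}=H_s+q-q^{-1}$.
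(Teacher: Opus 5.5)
\textbf{Verdict: genuine gap.} Your overall strategy is a bar-compatible pairing between $\mM$ and $\mN$, followed by the uniqueness in Deodhar's theorem. That strategy is sound, and it differs from the paper's route. The paper never checks bar-compatibility of a pairing on $\mM\times\mN$ directly. Instead it imports the case of $\mH$ from Lusztig's identity $q_{v,w}=p_{w_0v,w_0w}$ and Tanisaki's duality, and transports it to the parabolic modules via $\sigma^*$ and $\sigma^{\mathrm{flip}}$. But everything in your route rests on Step 2, and as written that step is both misstated and unproved.

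\emph{The displayed identity is false.} Since $\flip$ is bijective, $\langle\overline{\flip(m)},\bar n\rangle=\overline{\langle\flip(m),n\rangle}$ asserts that the \emph{untwisted} pairing is bar-compatible. Already for $n=2$, $k=1$ (where $J=\emptyset$ and $\mM=\mN=\mH$) this fails: $\langle\overline{H_s},\overline{H_s}\rangle=-1+(q-q^{-1})^2\neq-1$. With that version, Step 3 would also need $\flip(\underline M_{w_fw})$ to be bar-invariant, which is false (e.g.\ $\flip(\underline M_e)=M_{w_f}$). The statement Step 3 actually uses is $\langle\flip(\bar m),\bar n\rangle=\overline{\langle\flip(m),n\rangle}$.

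\emph{The flip is not well defined.} The map $y\mapsto w_fy$ does not preserve $W^J$: for $n=3$, $k=1$ one has $w_fs_1\notin W^J$. So it is not the order-reversing involution you invoke. For $\mH\otimes_{\mH_J}\CC$, whose standard basis is indexed by minimal representatives of the cosets $yW_J$, the flip must be $M_y\mapsto M_{\theta(y)}$ with $\theta(y)=w_0yw_{0,J}$.

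\emph{The proposed verification does not suffice.} Bar-compatibility of a bilinear form is not a multiplicative property, so ``checking it on the generators $H_s$'' does not prove it by itself. The missing idea is to express the twisted form through a single functional that visibly commutes with bar.

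\textbf{How to fill the gap.} Let $\tau(H_w)=\delta_{w,e}$, so that $\tau(H_xH_y)=\delta_{xy,e}$. Let $\alpha$ be the automorphism $H_s\mapsto -H_s^{-1}$ and $\kappa$ the anti-automorphism $H_w\mapsto H_{w^{-1}}$; both commute with bar.
\begin{itemize}
\item Since $\alpha\kappa(H_y)=(-1)^{\ell(y)}H_y^{-1}$ and $H_y^{-1}H_{w_0}=H_{y^{-1}w_0}$, on $\mH$ you get $\langle\flip(a),b\rangle=\tau(a\,\alpha\kappa(b)\,H_{w_0})$.
\item The functional $h\mapsto\tau(hH_{w_0})$ is the coefficient of $H_{w_0}$. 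It commutes with bar because $\overline{H_x}\in H_x+\sum_{y\prec x}\ZZ[q^{\pm1}]H_y$.
\item This gives the correct compatibility on $\mH$. Your Step 3 then already reproves the corollary the paper takes from Lusztig and Tanisaki.
\end{itemize}
To descend to the parabolic modules, use the bar-equivariant maps $\pi\colon\mH\to\mM$, $h\mapsto h\otimes1$, and $j\colon\mN\to\mH$, $h\otimes1\mapsto hB_{w_{0,J}}$. Consider the form $(\pi(a),n)\mapsto\tau(a\,\alpha\kappa(j(n))\,H_{w_0})$.
\begin{itemize}
\item It is well defined because $\kappa(B_{w_{0,J}})=B_{w_{0,J}}$ and $(H_s-q^{-1})\,\alpha(B_{w_{0,J}})=0$ for $s\in J$ (apply $\alpha$ to $H_sB_{w_{0,J}}=-qB_{w_{0,J}}$).
\item It is bar-compatible, by the $\mH$ case and the bar-equivariance of $\pi$ and $j$.
\item Since $w_0y$ is the longest element of its coset, it takes the value $(-1)^{\ell(w_{0,J})+\ell(x)}\delta_{x,\theta(y)}$ on $(M_y,N_x)$. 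So it equals $\pm\langle\flip(\cdot),\cdot\rangle$ for the corrected flip.
\end{itemize}
With this in place your Step 3 goes through and yields $\langle\flip(\underline M_{\theta(w)}),\underline N_x\rangle=(-1)^{\ell(x)}\delta_{x,w}$. The resulting argument is self-contained and needs neither inverse Kazhdan--Lusztig polynomials nor Tanisaki's theorem.
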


We can show that our pairing on the Hecke algebra proves the duality between the canonical basis and the dual canonical basis. 

\begin{prop}
Theorem~\ref{flipMN} implies Theorem~\ref{khovanovdual}. 
\end{prop}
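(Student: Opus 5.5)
We deduce Theorem~\ref{khovanovdual} from Theorem~\ref{flipMN} by transporting the spherical/aspherical pairing to $(\CC^2)^{\otimes n}$ through the maps $\widetilde{Seq}$ and $Seq$. Both the pairing of Theorem~\ref{khovanovdual} and both bases respect the weight decomposition $(\CC^2)^{\otimes n} = \bigoplus_k (\CC^2)^{\otimes n}_k$: Lemma~\ref{tlactionlem} handles the dual canonical basis, and the canonical basis is handled the same way via its upper triangularity. Moreover, the pairing vanishes between $(\CC^2)^{\otimes n}_k$ and $(\CC^2)^{\otimes n}_{k'}$ unless the reversed vector has the same number of $v_-$, i.e.\ $k=k'$. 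So it suffices to fix $k$, take $J$ as in Section~\ref{bijectionsection}, and prove the identity on $(\CC^2)^{\otimes n}_k$.

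The first step is to compare the two pairings on standard bases. By definition, $\langle \widetilde{Seq}(w), \mathrm{rev}(\widetilde{Seq}(x))\rangle = \delta_{w,x}$, where $\mathrm{rev}$ reverses tensor factors, since $\widetilde{Seq}$ is a bijection from $W^J$ onto the standard basis of $(\CC^2)^{\otimes n}_k$.

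The second step relates $\flip$ to reversal. Under $\widetilde{Seq}$, left multiplication by $w_f$ should correspond to the permutation reversing the order of positions. In that case $\widetilde{Seq}(w_f w)$ is the reversal of $\widetilde{Seq}(w)$, up to an identification. One subtlety is that $w_f w$ need not lie in $W^J$. I would first check on the combinatorial model of $W^J$ as $k$-subsets (positions of $v_-$) whether the correct element is $w_f w$ or $w_0 w w_{0,J}$. I would adjust the definition of $\flip$ accordingly, and verify that $\flip(M_w)$ is sent by $\widetilde{Seq}$ to the reversal. With this, the pairing $\langle M_w, N_x\rangle = (-1)^{\ell(x)}\delta_{w,x}$ becomes $\langle \widetilde{Seq}(\cdot), \mathrm{rev}\circ Seq(\cdot)\rangle$, where the sign $(-1)^{\ell(x)}$ is exactly the sign distinguishing $Seq$ from $\widetilde{Seq}$.

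The third step applies the propositions of Khovanov quoted above. $\widetilde{Seq}(\underline M_w)$ is the dual canonical element labelled $\widetilde{Seq}(w)$, and $Seq((-1)^{\ell(x)}\underline N_x)$ is the canonical element labelled $\widetilde{Seq}(x)$. Applying $\widetilde{Seq}$ and reversal to Theorem~\ref{flipMN} then gives that the dual canonical element labelled by the reversal of $\widetilde{Seq}(w)$ (namely $\widetilde{Seq}(\flip\,\underline M_{w_f w})$, using that reversal is compatible with $\heart$ labels) pairs with the canonical element labelled $\widetilde{Seq}(x)$ to give $\delta_{x,w}$, after the signs $(-1)^{\ell(x)}$ cancel. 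Reindexing $w \mapsto$ its reversal yields exactly the Kronecker delta formula of Theorem~\ref{khovanovdual}.

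The main obstacle is the second step. We must show that $\flip$ on $\mM$, applied to a canonical basis element, corresponds to reversing the label of the dual canonical element; this needs the bar involution to commute appropriately with $\flip$. We must also track the signs carefully so that every $(-1)^{\ell}$ factor cancels. I would verify this explicitly for $n=2$ and $n=4,k=2$, using Example~\ref{exampletwo}, before writing the general bookkeeping.
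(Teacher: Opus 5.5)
Your route is the same as the paper's. The paper's proof is only a heuristic: the ``reversed'' pairing accounts for $w_fw$ and $\flip$, and $Seq$ accounts for the sign. Your weight-space reduction is a sound refinement, and your suspicion about $w_fw$ is justified: since $\widetilde{Seq}$ factors through $W/W_J$, the index must be the minimal representative $w_0ww_{0,J}$ of $w_0wW_J$.

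However, the step you call the main obstacle is false, and the parenthetical in your third step depends on it. $\flip$ is defined on standard basis vectors and extended linearly, so your second step gives $\widetilde{Seq}\circ\flip=\mathrm{rev}\circ\widetilde{Seq}$ as \emph{linear} maps. Reversal does not preserve the dual canonical basis. For $n=2$, $\mathrm{rev}(v_+\heart v_-)=v_-\otimes v_+-q^{-1}v_+\otimes v_-$, which is neither $v_-\heart v_+=v_-\otimes v_+$ nor any other dual canonical element. Correspondingly, $\flip$ does not commute with the bar involution. For $n=2$, $k=1$ (so $J=\emptyset$), $\underline M_s=M_s-q^{-1}M_1$, but $\flip(\underline M_s)=M_1-q^{-1}M_s$ is not self-dual. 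So $\widetilde{Seq}(\flip\,\underline M_{w_fw})$ is \emph{not} the dual canonical element labelled $\mathrm{rev}\,\widetilde{Seq}(w)$, and trying to prove that, via the bar involution or otherwise, would fail.

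You do not need that claim, because the reversal produced by $\flip$ cancels the reversal built into Khovanov's pairing. Let $(\cdot,\cdot)$ be the form making the standard basis orthonormal. Then $\langle a,b\rangle=(a,\mathrm{rev}\,b)$, $\mathrm{rev}$ is self-adjoint for $(\cdot,\cdot)$, and the $\mM\times\mN$ pairing is $(\widetilde{Seq}\,m,\,Seq\,n)$. Write $b^{\heartsuit}(u)$ and $b^{\diamondsuit}(u)$ for the dual canonical and canonical elements with leading term $u$. Theorem~\ref{flipMN} then reads
\[
(-1)^{\ell(x)}\delta_{x,w}=\bigl(\mathrm{rev}\,\widetilde{Seq}(\underline M_{w_fw}),\,(-1)^{\ell(x)}b^{\diamondsuit}(\widetilde{Seq}(x))\bigr)=(-1)^{\ell(x)}\bigl\langle b^{\heartsuit}(\mathrm{rev}\,\widetilde{Seq}(w)),\,b^{\diamondsuit}(\widetilde{Seq}(x))\bigr\rangle .
\]
Here Khovanov's proposition is applied to $\underline M_{w_fw}$ itself, not to its flip. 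The only input beyond the quoted propositions is the standard-basis identity $\widetilde{Seq}(w_fw)=\mathrm{rev}\,\widetilde{Seq}(w)$, with the corrected index. That identity is pure combinatorics, and your reindexing then finishes the argument with no bar-involution argument at all.
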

\begin{proof}
The fact that $\underline M_{w_fw}$ is paired with $N_x$ is due to the fact that the pairing of the canonical and dual canonical basis in $(\CC^2)^{\otimes n}$ is defined ``reversed,'' or that $x_{k_n} \heart x_{k_{n-1}} \heart \dots \heart x_{k_1}$ is paired with $x_{k_1'}\dia \dots \dia x_{k_{n-1}'} \dia x_{k_n'}$. The fact that we flip $\underline M_{w_fw}$ is due to the fact that the pairing in $(\CC^2)^{\otimes n}$ is defined ``reversed.'' Furthermore, the fact that there is a factor of $(-1)^{\ell x}$ in both the pairing and the result is due to the fact that $Seq$ is defined with a power of $(-1)^{\ell(w)}$, and furthermore, it maps $(-1)^{\ell(w)}\underline N_w$ to the canonical basis element. 
\end{proof}

In order to prove Theorem~\ref{flipMN}, we must first prove some other results involving the $\flip$ function. 

\begin{prop}\label{flipinH}
Let $S_w$ denote the standard basis elements of $\mH^*$, where $S_w$ is defined by $\langle S_w, H_x \rangle = (-1)^{\ell(w)} \delta_{w, x}$. We show that $\flip(B_{w_0w})$ is equal to the canonical basis element of $\mH^*$, which we denote $D_w$, under the identification $H_w \mapsto S_w$. 
\end{prop}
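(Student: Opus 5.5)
The plan is to characterize the canonical basis $D_w$ of $\mH^*$ the way Lemma~\ref{heckeselfdual} characterizes $B_x$, and then verify that $\flip(B_{w_0w})$, read in the basis $\{S_w\}$ via $H_w \mapsto S_w$, satisfies that characterization.

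\textbf{Setup.} Equip $\mH^*$ with the contragredient bar involution $\langle \bar f, h\rangle = \overline{\langle f, \bar h\rangle}$. Write
\[
\bar H_x = \sum_{y \preceq x} r_{y,x} H_y,
\]
where $r_{x,x}=1$ and the $r_{y,x}$ are the (renormalized) $R$-polynomials. Dualizing, $\bar S_w$ is a triangular combination of the $S_z$ with $z \succeq w$, whose coefficients are $\overline{r_{w,z}}$ up to the signs $(-1)^{\ell(w)+\ell(z)}$ coming from the definition of $S_w$. Then $D_w$ is the unique self-dual element of $S_w + \sum_{z \succ w} q^{-1}\ZZ[q^{-1}] S_z$. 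Existence and uniqueness follow from the same argument as Lemma~\ref{heckeselfdual}, with the Bruhat order reversed.

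\textbf{Triangularity.} By Definition~\ref{klpoly},
\[
\flip(B_{w_0w}) = \sum_y p_{y,w_0w} H_{w_0y}.
\]
The term $y=w_0w$ gives $H_w$ with coefficient $1$. Every other $y$ satisfies $y \prec w_0w$, so $w_0y \succ w$, because left multiplication by $w_0$ reverses the Bruhat order. Its coefficient lies in $q^{-1}\ZZ[q^{-1}]$. Under $H_w \mapsto S_w$ this is exactly the triangularity required of $D_w$, so this step is routine.

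\textbf{Self-duality.} This step carries the content: I must show that $\flip$ intertwines the bar involution of $\mH$ with the transported involution of $\mH^*$. Concretely, I need the identity
\[
r_{w_0x,\,w_0y} = \pm\, \overline{r_{y,x}}
\]
(possibly without the bar, depending on normalization), with the sign exactly absorbed by $(-1)^{\ell(x)+\ell(y)}$. I would first try to derive this from
\[
H_{w_0 x} = H_{w_0} H_{x^{-1}}^{-1}.
\]
This holds because $\ell(w_0x) = \ell(w_0) - \ell(x)$, so $H_{w_0} = H_{w_0x}H_{x^{-1}}$.

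Applying the bar involution gives
\[
\overline{H_{w_0x}} = \overline{H_{w_0}}\, H_x .
\]
I would then expand $\overline{H_{w_0}} = H_{w_0}^{-1}$ and compare the resulting coefficients with those of $\overline{H_x}$. If this direct route gets too messy, the fallback is the classical Kazhdan--Lusztig symmetry $R_{y,x} = R_{w_0x,w_0y}$, converted into the paper's normalization with $q^{-1}\ZZ[q^{-1}]$ and $H_s \mapsto q^{-1}$.

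Once the intertwining holds, $\flip(B_{w_0w})$ is self-dual because $B_{w_0w}$ is. Uniqueness then identifies it with $D_w$.

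\textbf{Main obstacle and cross-check.} I expect the main difficulty to be the sign and normalization bookkeeping: the factor $(-1)^{\ell(w)}$ in $S_w$, the bar versus no bar on the $r$'s, and the $q$ versus $q^{-1}$ convention of Remark~\ref{altrem}. As an independent check, I would verify the equivalent statement $\langle \flip(B_{w_0w}), B_x\rangle = \pm\delta_{w,x}$. Expanded, this is the Kazhdan--Lusztig inversion formula
\[
\sum_y (-1)^{\ell(y)} p_{w_0y,\,w_0w}\, p_{y,x} = \pm\delta_{w,x}.
\]
This gives a second route should the direct intertwining computation fail.
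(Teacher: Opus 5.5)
Your proposal is correct, but at the decisive step it argues differently from the paper.

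\textbf{Where the two proofs agree.} The triangularity step is the same as the paper's proof: reindex by $v\mapsto w_0v$ and use that left multiplication by $w_0$ reverses the Bruhat order.

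\textbf{Where they differ.} The paper never checks self-duality. It quotes Lusztig's Proposition~11.4, $q_{v,w}=p_{w_0v,w_0w}$, to match the coefficients of $\flip(B_{w_0w})$ with those of $D_w$. It leaves implicit both the definition of $D_w$ and how Lusztig's sign and $q^{\pm1}$ conventions match its own. You instead characterize $D_w$ intrinsically: self-dual for the contragredient involution, and unitriangular over $q^{-1}\ZZ[q^{-1}]$ for the reversed order. You then show that $\flip$ intertwines the two involutions. In effect this reproves the cited proposition. It also explains why the sign $(-1)^{\ell(w)}$ in $S_w$ is needed: if you dropped that sign, then already for $W=\{1,s\}$ the self-dual element would be $S_1+q^{-1}S_s$ rather than $\phi(\flip(B_s))=S_1-q^{-1}S_s$.

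\textbf{Making your self-duality step precise.} The identity required is exactly $r_{y,x}=(-1)^{\ell(x)+\ell(y)}\,\overline{r_{w_0x,w_0y}}$, and the bar cannot be dropped. Rather than ``comparing coefficients'', proceed in two steps.
\begin{enumerate}
\item[(a)] Note that $\overline{H_x}=H_{w_0}^{-1}H_{w_0x}$ and $\overline{H_{w_0y}}=H_{w_0}^{-1}H_y$. So the bar-free symmetry $r_{y,x}=r_{w_0x,w_0y}$ is just the symmetry of the matrix of left multiplication by $H_{w_0}^{-1}$ in the basis $\{H_a\}$. That symmetry follows from the trace $\tau(H_a)=\delta_{a,1}$, which satisfies $\tau(H_aH_b)=\delta_{a,b^{-1}}$ and $\tau(hh')=\tau(h'h)$, together with the anti-involution $H_a\mapsto H_{a^{-1}}$, which fixes $H_{w_0}$ and preserves $\tau$.
\item[(b)] Combine this with the parity fact $\overline{r_{y,x}}=(-1)^{\ell(x)+\ell(y)}r_{y,x}$. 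It holds because each $r_{y,x}$ is a polynomial in $q-q^{-1}$ of parity $\ell(x)-\ell(y)$, by induction using $\overline{H_{xs}}=\overline{H_x}\,(H_s+q-q^{-1})$ for $\ell(xs)>\ell(x)$.
\end{enumerate}

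Your Kazhdan--Lusztig inversion cross-check is essentially the paper's next corollary (via Tanisaki) read backwards.
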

\begin{proof}
By definition, $B_{w_0w} = H_{w_0w} + \sum_{v \prec w_0w} p_{v, w_0w}H_v$. We can modify what the sum is over, to obtain $B_{w_0w} = H_{w_0w} + \sum_{w_0v \prec w_0w} p_{w_0v, w_0w}H_{w_0v}$. Finally, 
\[ \flip(B_{w_0w}) = H_w + \sum_{w_0v \prec w_0w} p_{w_0v, w_0w}H_{v}. \]

Consider $\phi\colon \mH \rightarrow \mH^*$ by $\phi: H_w \mapsto S_w$. We have that 
\[ \phi(\flip(B_{w_0w})) = S_w + \sum_{w_0v \prec w_0w} p_{w_0v, w_0w}S_{v}. \]

Proposition 11.4 of \cite{lusztig} states that $q_{v, w} = p_{w_0v, w_0w}$, where $q_{w, v}$ are the Kazhdan-Lusztig polynomials in $\mH^*$. Furthermore, given $w$, the $v$ such that $w_0v \prec w_0w$ are precisely the $v$ such that $v \succ w$. Therefore, 
\begin{align*}
\phi(\flip(B_{w_0w})) &= S_w + \sum_{w_0v \prec w_0w} p_{w_0v, w_0w}S_{v}\\
&= S_w + \sum_{v \succ w} q_{w, v}S_{v}\\
&= D_w. \qedhere
\end{align*}
\end{proof}

\begin{cor}
Under the pairing of elements in $\mH$ by $\langle H_x, H_y \rangle = (-1)^{\ell(x)}\delta_{x, y}$, we have that $\langle \flip(B_{w_0w}), B_x\rangle = (-1)^{\ell(x)}\delta_{x, w}$. 
\end{cor}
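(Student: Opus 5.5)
The plan is to derive the corollary directly from Proposition~\ref{flipinH}, by transporting the defining duality of $\mH^*$ back to $\mH$ along the identification $\phi\colon H_w \mapsto S_w$.

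First, check that the pairing $\langle H_x, H_y\rangle = (-1)^{\ell(x)}\delta_{x,y}$ on $\mH$ corresponds under $\phi$ to the evaluation pairing $\mH^*\times \mH \to \ZZ[q,q^{-1}]$. By definition $\langle S_x, H_y\rangle = (-1)^{\ell(x)}\delta_{x,y}$, so $\langle h, h'\rangle = \langle \phi(h), h'\rangle$ for all $h,h'\in\mH$ after extending bilinearly. It follows that
\[
\langle \flip(B_{w_0w}), B_x\rangle = \langle \phi(\flip(B_{w_0w})), B_x\rangle = \langle D_w, B_x\rangle .
\]
The corollary therefore reduces to the statement that the canonical basis $\{D_w\}$ of $\mH^*$ is dual to the Kazhdan--Lusztig basis $\{B_x\}$ of $\mH$, with the sign twist $(-1)^{\ell(x)}$ built into the pairing.

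Second, this duality should be a property of how the canonical basis of $\mH^*$ is set up in \cite{lusztig}: the dual space carries the transpose bar involution, and $D_w$ is the unique self-dual element of $S_w + \sum_{v\succ w} q^{-1}\ZZ[q^{-1}] S_v$. To prove it directly, I would expand $\langle D_w, B_x\rangle = \sum_{w \preceq v \preceq x} q_{w,v}\,(-1)^{\ell(v)} p_{v,x}$. This sum is empty unless $w\preceq x$, and for $w=x$ it equals $(-1)^{\ell(x)}$.

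For $w\prec x$ it remains to show the sum vanishes, and I expect this to be the main obstacle. There are two routes. The first is to cite Lusztig's inversion formula $\sum_v (-1)^{\ell(v)+\ell(x)} q_{w,v} p_{v,x} = \delta_{w,x}$, which is what Proposition 11.4 of \cite{lusztig} rests on, since it is the Kazhdan--Lusztig inversion formula rewritten via $q_{v,w}=p_{w_0v,w_0w}$. The second is an argument that avoids citation. The pairing is compatible with bar, in the sense that $\langle \bar f, \bar h\rangle = \overline{\langle f,h\rangle}$ for the dual involution. Both $D_w$ and $B_x$ are self-dual, so $\langle D_w,B_x\rangle$ is bar-invariant. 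Every term with $w\prec v\preceq x$ or $w\preceq v\prec x$ has a factor in $q^{-1}\ZZ[q^{-1}]$, so for $w\neq x$ the sum lies in $q^{-1}\ZZ[q^{-1}]$. A bar-invariant element of $q^{-1}\ZZ[q^{-1}]$ is $0$, which gives $(-1)^{\ell(x)}\delta_{x,w}$.
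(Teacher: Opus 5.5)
Your proof is correct, and its first step is the paper's: since $\langle H_x,H_y\rangle$ and $\langle S_x,H_y\rangle$ carry the same sign, Proposition~\ref{flipinH} reduces the corollary to the duality $\langle D_w,B_x\rangle=(-1)^{\ell(x)}\delta_{x,w}$. The difference is that the paper cites Tanisaki for this duality, whereas you prove it. Your two routes buy different things.

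The bar-invariance route is the standard ``self-dual plus triangular forces dual bases'' argument. It makes explicit two things the paper leaves unsaid: the involution on $\mH^*$ is $\bar f(h)=\overline{f(\bar h)}$, and $D_w$ is self-dual for it. Given triangularity, that self-duality is actually equivalent to the duality statement. So all the substantive content is pushed into Proposition~\ref{flipinH}, i.e.\ into Lusztig's identity $q_{w,v}=p_{w_0v,w_0w}$.

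The inversion-formula route avoids the detour through $\mH^*$ altogether. Since $\flip(B_{w_0w})=\sum_v p_{w_0v,w_0w}H_v$, one computes directly in $\mH$ that $\langle \flip(B_{w_0w}),B_x\rangle=\sum_v(-1)^{\ell(v)}p_{w_0v,w_0w}\,p_{v,x}$. The corollary is then literally the Kazhdan--Lusztig inversion formula. One small point: the classical formula $\sum_z(-1)^{\ell(x)+\ell(z)}p_{x,z}\,p_{w_0w,w_0z}=\delta_{x,w}$ sums over the second index of $p$, while your sum runs over the first. Your version still holds because the triangular matrices $\bigl(p_{y,x}\bigr)_{y,x}$ and $\bigl((-1)^{\ell(y)+\ell(x)}p_{w_0x,w_0y}\bigr)_{y,x}$ are mutually inverse, hence inverse on both sides.
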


\begin{proof}
Tanisaki \cite{tanisaki} shows that $\langle D_w, B_x \rangle = (-1)^{\ell(w)} \delta_{w, x}$ under the pairing $\langle S_x, H_w \rangle = (-1)^{\ell(x)} \delta_{x, y}$. This implies the corollary. 
\end{proof}

We also prove some lemmas about spherical and aspherical modules. 
\begin{lem}[\cite{losev}] \label{iota}
The map $\iota\colon \mN \hookrightarrow \mH$ by $\iota\colon N_w \mapsto B_{w_{0, J}}H_w = (\sum_{w \in W_J} (-q)^{-\ell(w_{0, J})+\ell(w)}H_w)H_w$ sends $\underline N_w$ to $B_{w_{0, J}w}$ for all $w \in W^J$. 
\end{lem}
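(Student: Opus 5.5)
The plan is to characterize $B_{w_{0,J}w}$ through the uniqueness statement of Lemma~\ref{heckeselfdual}. It suffices to show two things about $\iota(\underline N_w)$: that it is self-dual in $\mH$, and that it lies in $H_{w_{0,J}w}+\sum_v q^{-1}\ZZ[q^{-1}]H_v$. Uniqueness then forces $\iota(\underline N_w)=B_{w_{0,J}w}$.

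\textbf{Conventions.} Since the formula multiplies $B_{w_{0,J}}$ on the left of $H_w$, I read $\mN$ with $\mH_J$ acting on the left. Concretely, $N_w=1\otimes H_w$ in $\CC_\mathrm{sgn}\otimes_{\mH_J}\mH$, and $w$ ranges over $W^J$, the minimal representatives with $W=W_J\cdot W^J$, exactly as the paper's bijection $W_J\times W^J\cong W$, $(x,y)\mapsto xy$, prescribes. The map is then $\iota(1\otimes h)=B_{w_{0,J}}h$. Reconciling these conventions is the only delicate point; everything else is bookkeeping.

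\textbf{Step 1: $\iota$ is well defined.} Write $B_{w_{0,J}}=\sum_{x\in W_J}(-q)^{\ell(x)-\ell(w_{0,J})}H_x$. For $s\in J$ a direct check gives $H_sB_{w_{0,J}}=B_{w_{0,J}}H_s=-q\,B_{w_{0,J}}$, using the quadratic relation $(H_s-q^{-1})(H_s+q)=0$ and pairing $x$ with $sx$ (respectively $xs$). Hence $h_J\mapsto B_{w_{0,J}}h_J$ factors through the sign character, so $1\otimes h\mapsto B_{w_{0,J}}h$ is well defined on $\CC_\mathrm{sgn}\otimes_{\mH_J}\mH$.

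\textbf{Step 1, continued: $\iota$ is injective.} For $w\in W^J$ and $x\in W_J$ we have $\ell(xw)=\ell(x)+\ell(w)$, so $H_xH_w=H_{xw}$. This gives
\[
\iota(N_w)=\sum_{x\in W_J}(-q)^{\ell(x)-\ell(w_{0,J})}H_{xw}.
\]
Distinct $w$ give disjoint supports $W_Jw$, so the images of the $N_w$ are linearly independent and $\iota$ is injective.

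\textbf{Step 2: $\iota$ commutes with the bar involutions.} The bar on $\mN$ is $\overline{1\otimes h}=1\otimes\bar h$. Since $B_{w_{0,J}}$ is self-dual and bar is a ring involution, $\overline{\iota(1\otimes h)}=B_{w_{0,J}}\bar h=\iota(\overline{1\otimes h})$. Therefore $\iota(\underline N_w)$ is self-dual.

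\textbf{Step 3: triangularity.} Write $\underline N_w=N_w+\sum_{y\ne w}c_yN_y$ with $c_y\in q^{-1}\ZZ[q^{-1}]$. Then
\[
\iota(\underline N_w)=\sum_{y}c_y\sum_{x\in W_J}(-q)^{\ell(x)-\ell(w_{0,J})}H_{xy},\qquad c_w=1.
\]
The pairs $(x,y)$ index distinct elements $xy$ of $W$, so each $H_v$ appears exactly once. The term with $x=w_{0,J}$ and $y=w$ gives $H_{w_{0,J}w}$ with coefficient $1$. For any other pair, either $x\ne w_{0,J}$, in which case $(-q)^{\ell(x)-\ell(w_{0,J})}\in q^{-1}\ZZ[q^{-1}]$ (the sign is harmless in $\ZZ$) and it is multiplied by $c_y\in\ZZ[q^{-1}]$, or $y\ne w$, in which case $c_y\in q^{-1}\ZZ[q^{-1}]$. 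Either way the coefficient lies in $q^{-1}\ZZ[q^{-1}]$.

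\textbf{Conclusion.} Uniqueness in Lemma~\ref{heckeselfdual} now gives $\iota(\underline N_w)=B_{w_{0,J}w}$.

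The main obstacle is Step 1 together with the convention reconciliation: one must check that the sign character matches left multiplication by $B_{w_{0,J}}$, and that the formula's $B_{w_{0,J}}H_w$ is compatible with $W^J$ being minimal representatives of left cosets $W_Jw$. I would verify $H_sB_{w_{0,J}}=-qB_{w_{0,J}}$ first, by the pairing argument $x\leftrightarrow sx$ over $W_J$.
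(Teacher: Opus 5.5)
Your proof is correct. The paper gives no argument for this lemma beyond the citation to \cite{losev}; inside the proof of Lemma~\ref{dualaspherical} it only asserts that $\iota$ commutes with the bar involution. Your Steps 1--3, together with the uniqueness in Lemma~\ref{heckeselfdual}, supply exactly the standard argument that the citation stands for.

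Your convention choice is necessary, not cosmetic. With the paper's literal definition $\mN=\mH\otimes_{\mH_J}\CC_\mathrm{sgn}$, the natural embedding is $h\otimes 1\mapsto hB_{w_{0,J}}$, and it sends $\underline N_w$ to $B_{ww_{0,J}}$. By contrast, the formula $B_{w_{0,J}}H_w$, the factorization $W=W_J\cdot W^J$, and the paper's later computation $\sum_{x\in W_J}(\cdots)T_{xw}$ in Lemma~\ref{dualaspherical} all presuppose your reading: $\CC_\mathrm{sgn}\otimes_{\mH_J}\mH$, with $W^J$ minimal in the cosets $W_Jw$.

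The one input you take from the statement rather than prove is the closed formula for $B_{w_{0,J}}$. If you want the argument self-contained, it follows from your own computations:
\begin{enumerate}[label=(\roman*)]
\item Run your Step~1 pairing computation in reverse. If $H_sY=-qY$ for all $s\in J$, then the coefficients of $Y\in\mH_J$ satisfy $a_{sx}=-q\,a_x$ whenever $\ell(sx)>\ell(x)$. Hence $Y$ is a scalar multiple of $\sum_{x\in W_J}(-q)^{\ell(x)}H_x$.
\item Let $C$ be the explicit sum. Applying bar to $H_sC=-qC$ and using $\overline{H_s}=H_s+q-q^{-1}$ shows that $\bar C$ satisfies the same relations as $C$.
\item Both $C$ and $\bar C$ have coefficient $1$ on $H_{w_{0,J}}$, since $\overline{H_x}\in H_x+\sum_{y\prec x}\ZZ[q,q^{-1}]H_y$.
\end{enumerate}
Therefore $\bar C=C$, and Lemma~\ref{heckeselfdual} identifies $C$ with $B_{w_{0,J}}$.
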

\begin{lem} \label{wequality}
We have the equality $w_{0, J}w_f = w_0$. 
\end{lem}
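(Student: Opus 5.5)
We must prove $w_{0,J}w_f = w_0$, where $w_0$ is the longest element of $W=S_n$, $w_{0,J}$ is the longest element of $W_J = S_k\times S_{n-k}$, and $w_f$ is the longest element of $W^J$, the set of minimal-length representatives of the cosets $W_Jw$. The plan is to use the factorization $W_J\times W^J\cong W$, $(x,y)\mapsto xy$, which the paper records after the definition of $W^J$. This factorization is length-additive: for $x\in W_J$ and $y\in W^J$ we have $\ell(xy)=\ell(x)+\ell(y)$. I will quote this standard fact about parabolic subgroups, or justify it by noting that $y$ is minimal in $W_Jy$ exactly when $\ell(sy)>\ell(y)$ for all $s\in J$.

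First step. Write $w_0 = x_0y_0$ with $x_0\in W_J$ and $y_0\in W^J$. By length additivity, $\ell(w_0)=\ell(x_0)+\ell(y_0)$. Then $\ell(x_0)\le \ell(w_{0,J})$ and $\ell(y_0)\le\ell(w_f)$. Conversely, $w_{0,J}w_f$ is an element of $W$ of length $\ell(w_{0,J})+\ell(w_f)$, again by additivity. Since $w_0$ is the unique element of maximal length, we get $\ell(w_{0,J})+\ell(w_f)\le \ell(w_0)=\ell(x_0)+\ell(y_0)$. Combining the inequalities forces $\ell(x_0)=\ell(w_{0,J})$ and $\ell(y_0)=\ell(w_f)$, and hence $\ell(w_{0,J}w_f)=\ell(w_0)$.

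Second step. Uniqueness of the longest element of $W$ then gives $w_{0,J}w_f=w_0$. As a byproduct, $x_0=w_{0,J}$ by uniqueness of the longest element of the Coxeter group $W_J$, and $y_0=w_f$.

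The main subtlety is whether $w_f$ is well defined, that is, whether $W^J$ has a unique longest element. The argument above sidesteps this: any element of $W^J$ of maximal length, paired with $w_{0,J}$, gives a product of length $\ell(w_0)$, hence equals $w_0$. This determines it uniquely as $w_{0,J}^{-1}w_0=w_{0,J}w_0$, using that $w_{0,J}$ is an involution.

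As an independent check in the concrete setting, $w_0$ is the reversal permutation and $w_{0,J}$ reverses the blocks $\{1,\dots,k\}$ and $\{k+1,\dots,n\}$ separately. Then $w_{0,J}w_0$ is the block-swap permutation, which is the longest shuffle. Its length is $k(n-k)=\binom n2-\binom k2-\binom{n-k}2$, which is consistent with the computation above.
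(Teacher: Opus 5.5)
Your proof is correct and follows the same route as the paper: both deduce the lemma from the length-additive factorization $W_J\times W^J\to W$, $(x,y)\mapsto xy$, combined with the maximality of $w_{0,J}$ in $W_J$ and of $w_f$ in $W^J$. Your inequality chain $\ell(w_{0,J})+\ell(w_f)\le\ell(w_0)=\ell(x_0)+\ell(y_0)\le\ell(w_{0,J})+\ell(w_f)$ spells out the step the paper states in one line, and your remark that it forces $w_f=w_{0,J}w_0$ also shows that $w_f$ is well defined.
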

\begin{proof}
This statement follows from the bijection $W_J \times W^J \rightarrow W$ by $(x, y) \mapsto xy$; as $\ell(xy) = \ell(x) + \ell(y)$ for $x \in W_J$, $y \in W^J$, $w_0$ must be the product of the longest element of $W_J$ and the longest element of $W^J$. 
\end{proof}

Define a surjection $\sigma: \mH \rightarrow \mM$ as follows. Define $\sigma_1\colon \mH \rightarrow \mH$ by $\sigma_1: H_w \mapsto q^{-\ell(w') + \ell(w)} H_{w'}$, where $w' \in W^J$ is the element of shortest length such that $w$ and $w'$ are in the same $W_J$-coset. Define $\sigma_2\colon \mH \rightarrow \mM$ by mapping $H_w$ to $M_{w''}$, where $w'' \in W^J$ is the shortest length element in the $W_J$-coset that $w$ is in. Define $\sigma = \sigma_2 \circ \sigma_1$. 
\begin{lem}[\cite{losev}]
The surjection $\sigma: \mH \rightarrow \mM$ maps $B_w$ to $\underline M_w$ for $w \in W_J$. 
\end{lem}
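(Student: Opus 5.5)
The plan is to reduce the statement to the argument already used in Lemma~\ref{cantocan}. The first step is to identify $\sigma = \sigma_2\circ\sigma_1$ with the natural projection $h \mapsto h\otimes 1$, $\mH \twoheadrightarrow \mM = \mH\otimes_{\mH_J}\CC_\mathrm{triv}$. I read the statement with $w \in W^J$, since $\underline M_w$ is only defined for $w\in W^J$; I take "$w \in W_J$" to be a typo.

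\emph{Step 1 (identifying $\sigma$).} Write an arbitrary $y \in W$ as $y = y'x$ with $y' \in W^J$ and $x \in W_J$. Here $W^J$ is read as the minimal representatives of the cosets $yW_J$, which is the convention forced by $\mM=\mH\otimes_{\mH_J}\CC_\mathrm{triv}$. Then $\ell(y) = \ell(y')+\ell(x)$, so $H_y = H_{y'}H_x$. Hence
\[ H_y\otimes 1 = H_{y'}\otimes H_x\cdot 1 = (q^{-1})^{\ell(x)} M_{y'} = q^{\ell(y')-\ell(y)}M_{y'}. \]
So the natural projection sends $H_y$ to a power of $q^{-1}$ times $M_{y'}$. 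I will check that this is exactly the composite $\sigma_2\circ\sigma_1$; if needed, I fix the sign of the exponent in $\sigma_1$ so that the two agree, since the natural projection is the only sensible normalization. Once this is done, $\sigma$ is a homomorphism of left $\mH$-modules. It also intertwines the bar involutions, because $\overline{h\otimes 1} = \bar h\otimes 1$ by the definition of the bar involution on $\mM$.

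\emph{Step 2 (triangularity).} Take $w\in W^J$ and expand
\[ B_w = H_w + \sum_{y\prec w} p_{y,w}H_y, \qquad p_{y,w}\in q^{-1}\ZZ[q^{-1}]. \]
For $y = w$ we have $x = 1$, so $\sigma(H_w) = M_w$. For $y\prec w$, $\sigma(H_y) = q^{-\ell(x)}M_{y'}$ with $y'\preceq y\prec w$, since $y'$ is a subword of a reduced expression of $y$. The coefficient $q^{-\ell(x)}p_{y,w}$ still lies in $q^{-1}\ZZ[q^{-1}]$. Therefore
\[ \sigma(B_w)\in M_w+\sum_{y'\prec w}q^{-1}\ZZ[q^{-1}]M_{y'}. \]

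\emph{Step 3 (uniqueness).} Since $B_w$ is self-dual and $\sigma$ commutes with the bar involutions, $\sigma(B_w)$ is self-dual. By the uniqueness part of Deodhar's theorem, $\sigma(B_w) = \underline M_w$.

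The main obstacle is Step 1. I must get the coset conventions and the power of $q$ right so that $\sigma$ really is the $\mH$-linear, bar-compatible projection. If the exponent came out as $q^{+\ell(x)}$, Step 2 would fail, because the coefficients would leave $q^{-1}\ZZ[q^{-1}]$. So I would first verify directly, on a simple reflection $s\in J$, that $\sigma(H_s) = q^{-1}M_1$, matching $\CC_\mathrm{triv}$. Then I would extend the identification to all $H_y$ using $H_y=H_{y'}H_x$.
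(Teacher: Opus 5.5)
Your proposal is correct and is essentially the paper's own argument. The paper only cites Losev for this lemma, but its proof of Lemma~\ref{cantocan} is exactly your projection--triangularity--uniqueness argument, and your readings $w\in W^J$ and cosets $yW_J$ are the right ones.

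Your sign caution is not merely hypothetical. With $\sigma_1$ as literally written, $\sigma(H_y)=q^{+\ell(x)}M_{y'}$, and already for $W=S_3$, $J=\{s_1\}$ one gets $\sigma(B_{s_1s_2})=M_{s_1s_2}-q^{-1}M_{s_2}+(q^{-2}-1)M_1\neq\underline M_{s_1s_2}$. So the corrected exponent $q^{\ell(y')-\ell(y)}$ is genuinely required; it is also the normalization the paper actually uses in the proof of Lemma~\ref{nondualaspherical}.
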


Similarly, we can define the surjection $\sigma^*: \mH^* \rightarrow \mN^*$ as follows. Let $Q$ denote the standard basis for $\mN$. Given an element $S_w$ in $\mH^*$, let $w'$ be the element in $w_{0, J}W^J$ such that $w$ and $w'$ are in the same $W_J$-coset (so $w'$ is the longest element in the coset $w$ is in). Then, define $\sigma_1^*\colon \mH^* \rightarrow \mH^*$ such that $\sigma^*_1: S_w \mapsto q^{-\ell(w') + \ell(w)} S_{w'}$. Furthermore, define $\sigma_2^*\colon \mH^* \rightarrow \mN^*$ mapping $S_w$ to $Q_{w''}$, where $w'' \in W^J$ is the shortest length element in the $W_J$-coset that $w$ is in. Define $\sigma^* = \sigma_2^* \circ \sigma_1^*$. 

\begin{lem} \label{dualaspherical}
The surjection $\sigma^*\colon \mH^* \rightarrow \mN^*$ maps $D_{w_{0, J}w}$ to $\underline Q_w$ for $w \in W_J$. 
\end{lem}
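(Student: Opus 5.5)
The plan is to mirror the proof of the preceding lemma for $\sigma\colon\mH\to\mM$, dualized. The statement presumably intends $w\in W^J$; the lemma for $\sigma$ likewise says ``$w\in W_J$'' where $W^J$ is meant, and I will read both this way. The strategy rests on the two defining properties of $\underline Q_w$: it is self-dual under the bar involution of $\mN^*$, and it lies in $Q_w+\sum_{y}q^{-1}\ZZ[q^{-1}]Q_y$. I will check that $\sigma^*(D_{w_{0,J}w})$ has both properties and then conclude by uniqueness.

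\emph{Triangularity.} By Proposition~\ref{flipinH}, $D_{w_{0,J}w}=S_{w_{0,J}w}+\sum_{v\succ w_{0,J}w}q_{w_{0,J}w,v}S_v$ with $q_{w_{0,J}w,v}=p_{w_0v,w_0w_{0,J}w}\in q^{-1}\ZZ[q^{-1}]$. Since $w\in W^J$, the element $w_{0,J}w$ is already the longest element of its coset, so $\sigma_1^*$ fixes $S_{w_{0,J}w}$ and $\sigma^*_2$ sends it to $Q_w$. For every other $v$, $\sigma^*$ sends $S_v$ to $q^{-\ell(v')+\ell(v)}Q_{v''}$. Here $\ell(v)\le\ell(v')$, so the scalar is a nonpositive power of $q$. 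Multiplying it by $q_{w_{0,J}w,v}\in q^{-1}\ZZ[q^{-1}]$ keeps the product in $q^{-1}\ZZ[q^{-1}]$. One point needs checking: if $v''=w$, so that $v$ lies in the coset $W_Jw$ with $v\neq w_{0,J}w$, the term only adds a $q^{-1}\ZZ[q^{-1}]$ correction to the leading coefficient. That is permissible provided we normalize by leading coefficient $1$ modulo $q^{-1}\ZZ[q^{-1}]$; it cannot actually occur, though, because $v\succ w_{0,J}w$ forces $v$ outside that coset, as $w_{0,J}w$ is maximal in its coset.

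\emph{Self-duality.} This is the main obstacle: showing that $\sigma^*$ intertwines the bar involutions on $\mH^*$ and $\mN^*$. My first approach is to identify $\sigma^*$ with the transpose of the embedding $\iota\colon\mN\hookrightarrow\mH$ of Lemma~\ref{iota}, after identifying $\mH^*$ and $\mN^*$ through the sign-twisted pairings. The key computation is
\[
\langle S_v,\iota(N_x)\rangle=\langle S_v, B_{w_{0,J}}H_x\rangle,
\]
which I will carry out using the explicit expansion $B_{w_{0,J}}=\sum_{u\in W_J}(-q)^{-\ell(w_{0,J})+\ell(u)}H_u$. This pairing is nonzero only when $v\in W_Jx$, and then it equals a power of $-q$ depending on $\ell(v)-\ell(w_{0,J}x)$. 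That matches the definition of $\sigma^*$ up to signs, and the signs should be absorbed by the $(-1)^{\ell}$ built into the pairing.

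Since $\iota$ commutes with bar (it is $\mH$-linear and sends $N_1$ to the self-dual element $B_{w_{0,J}}$), its transpose commutes with the dual bar involutions. These dual involutions are the ones defining $D$ and $\underline Q$. So $\sigma^*(D_{w_{0,J}w})$ is self-dual. Combining this with triangularity, uniqueness in the analogue of Deodhar's theorem for $\mN^*$ gives $\sigma^*(D_{w_{0,J}w})=\underline Q_w$.

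As a cross-check, pairing both sides with $\underline N_x$ gives
\[
\langle\sigma^*(D_{w_{0,J}w}),\underline N_x\rangle=\langle D_{w_{0,J}w},B_{w_{0,J}x}\rangle=\pm\delta_{w,x},
\]
by Tanisaki's duality. This characterizes $\sigma^*(D_{w_{0,J}w})$ as the dual basis to $\underline N$, which is an independent route to the same conclusion.
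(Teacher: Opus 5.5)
Your proposal is correct and follows essentially the same route as the paper: check triangularity, get bar-compatibility by identifying $\sigma^*$ (up to the global sign $(-1)^{\ell(w_{0,J})}$) with the transpose of $\iota$ from Lemma~\ref{iota}, and conclude by uniqueness. Your triangularity check is more careful than the paper's, since you handle arbitrary $v\succ w_{0,J}w$ rather than only terms of the form $S_{w_{0,J}x}$, and your reading of $W_J$ as $W^J$ is the intended one.
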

\begin{proof}
First of all, $\sigma^*$ preserves upper triangularity, as the term $S_{w_{0, J}w}+\sum_{x \succ w}q^{-1}\ZZ[q^{-1}]S_{w_{0, J}x}$ maps to $Q_w + \sum_{x \succ w}q^{-1}\ZZ[q^{-1}]Q_x$. 

Now, we prove that $\sigma^*$ commutes with the bar involution. By duality, $\sigma^*$ commutes with the bar involution if and only if $(\sigma^*)^*\colon \mN \rightarrow \mH$ defined by $\langle (\sigma^*)^*(N_w), S_v \rangle = \langle N_w, \sigma^*(S_v) \rangle$ for all $w \in W_J$ and $v \in W$ commutes with the bar involution. 

Let $(\sigma^*)^*(N_w) = \sum_{v \in W}a_vT_v$. We have that 
\[\langle (\sigma^*)^*(N_w), S_v \rangle = \langle N_w, \sigma^*(S_v) \rangle = \langle N_w, q^{-\ell(v')+\ell(v)}Q_{v''}\rangle\] where $v'$ and $v''$ are the longest and shortest elements of the $W_J$-coset that $w$ is in, respectively. This is nonzero when $v$ is in the same $W_J$-coset as $w$. In that case, $\langle N_w, q^{-\ell(v')+\ell(v)}Q_{v''}\rangle = (-1)^{\ell(w)}q^{-\ell(v')+\ell(v)}$ so $a_v = (-1)^{\ell(v)}(-1)^{\ell(w)}q^{-\ell(v')+\ell(v)}$. 

Therefore, we have that 
\begin{align*}
(\sigma^*)^*(N_w) &= \sum_{v \in W_Jw}(-1)^{\ell(v)}(-1)^{\ell(w)}q^{-\ell(v')+\ell(v)}T_v \\
&= \sum_{x \in W_J} (-1)^{\ell(xw)}(-1)^{\ell(w)}q^{-\ell(xw_{0, J})+\ell(xw)}T_{xw} \\
&= \sum_{x \in W_J}(-1)^{\ell(x)}q^{-\ell(w_{0, J})+\ell(x)}T_{xw} \\
&= (-1)^{\ell(w_{0, J})}\sum_{x \in W_J}(-q)^{-\ell(w_{0, J})+\ell(x)}T_{xw}.
\end{align*}
From Lemma~\ref{iota}, $(\sigma^*)^* = (-1)^{\ell(w_{0, J}))}\iota$, and as $(-1)^{\ell(w_{0, J}))}$ is a constant and $\iota$ commutes with the bar involution (see \cite{losev}), $(\sigma^*)^*$ and hence $\sigma^*$ does so too. 

As $\sigma^*(D_{w_{0, J}w})$ satisfies both upper triangularity and self dual conditions, it must map to $\underline Q_w$. 
\end{proof}

As before, define $\phi\colon \mH \rightarrow \mH^*$ by $\phi\colon H_w \mapsto S_w$. Furthermore, define $\psi: \mM \rightarrow \mN^*$ by $M_w \mapsto Q_w$. Finally, define $\sf\colon \mH \rightarrow \mM$ as follows. Define $\sf_1 = \phi^{-1} \circ {\sigma_1}^* \circ \phi$ and $\sf_2 = \psi^{-1}\circ {\sigma_2}^* \circ \psi$. Define $\sf = \sf_2 \circ \sf_1$. 

\begin{lem}\label{nondualaspherical}
We have the equality $\flip(\underline M_w) = \sf(\flip(B_{w}))$ for all $w \in W^J$. 
\end{lem}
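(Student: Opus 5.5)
The plan is to verify the identity termwise on standard bases. Both sides are obtained from $B_w$ by linear maps that send each standard basis vector to a monomial multiple of a standard basis vector, so it suffices to show that $\flip\circ\sigma$ and $\sf\circ\flip$ agree on every $H_y$. Here I use the surjection $\sigma\colon\mH\to\mM$ of \cite{losev}, which sends $B_w\mapsto\underline M_w$; I am assuming that lemma holds for $w\in W^J$, as is needed for the statement to make sense. With that, $\flip(\underline M_w)=\flip(\sigma(B_w))$, and the claim reduces to the operator identity $\flip\circ\sigma=\sf\circ\flip$ on $\mH$, restricted to the span of the $H_y$ appearing in $B_w$.

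\emph{Step 1: unwind $\sf$ on a standard vector.} Start from $\flip(H_y)=H_{w_0y}$. The map $\sf_1=\phi^{-1}\circ\sigma_1^*\circ\phi$ sends $H_{w_0y}$ to $q^{-\ell(v')+\ell(w_0y)}H_{v'}$, where $v'$ is the \emph{longest} element of the $W_J$-coset of $w_0y$. Then $\sf_2$ sends $H_{v'}$ to $M_{v''}$, where $v''$ is the shortest element of that coset. So
\[
\sf(\flip(H_y)) = q^{\ell(w_0y)-\ell(v')}M_{v''}.
\]

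\emph{Step 2: unwind $\flip\circ\sigma$ on $H_y$.} Write $y=xy'$ with $x\in W_J$ and $y'\in W^J$. Then
\[
\flip(\sigma(H_y))=q^{\ell(y)-\ell(y')}M_{w_fy'}.
\]

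\emph{Step 3: match cosets and exponents.} Two things must be checked.
\begin{enumerate}
\item[(a)] The minimal representative $v''$ of the coset of $w_0y$ equals $w_fy'$.
\item[(b)] The exponents agree: $\ell(w_0y)-\ell(v')=\ell(y)-\ell(y')$.
\end{enumerate}
For (b), use $\ell(w_0z)=\ell(w_0)-\ell(z)$. Left multiplication by $w_0$ is order-reversing and maps the coset of $y$ onto a coset, exchanging its minimal and maximal elements. Hence the length of $w_0y$ measured down from the top of its coset equals the length of $y$ measured up from the bottom of its own coset, which is exactly (b). For (a), use Lemma~\ref{wequality}, $w_0=w_{0,J}w_f$. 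Then
\[
w_0y=w_{0,J}w_f\,xy',
\]
and the goal is to identify its coset representative with $w_fy'$.

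\emph{Main obstacle.} The hard point is (a), which is a compatibility of left multiplication by $w_0$ with left $W_J$-cosets. The relation $w_0W_Jw_0=W_J$ holds only when $J$ is stable under $s_i\mapsto s_{n-i}$, that is, when $k=n-k$. In general, conjugation by $w_0$ carries $J$ to the parabolic $J'$ obtained by swapping $k$ and $n-k$. So I would first try to make the argument work with the conventions exactly as stated, by pushing $x$ through $w_f$ via the factorization $W=W_J\times W^J$ and the length additivity used in Lemma~\ref{wequality}.

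If that fails for $k\neq n-k$, I would interpret $\flip$ on $\mM$ compatibly, identifying $\mM$ for $(k,n-k)$ with $\mM$ for $(n-k,k)$ as the definition of $\flip(M_w)=M_{w_fw}$ implicitly requires. With that identification, (a) reduces to the statement that $w_0$ maps the coset $W_Jy'$ to the coset $W_{J'}w_0y'$, whose minimal element is $w_fy'$ after the identification.

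Once (a) and (b) hold, summing over $y$ with coefficients $p_{y,w}$ gives the lemma.
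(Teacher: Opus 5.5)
Your reduction is the same as the paper's. You check $\flip(\sigma(H_y))=\sf(\flip(H_y))$ on each standard basis vector by matching coset representatives and powers of $q$. But step (b) is false with the formulas you wrote, so the termwise identity your argument rests on fails.

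Grant (a). Then the longest element of the coset of $w_0y$ is $v'=w_0y'$, and
\[
\ell(w_0y)-\ell(v')=\bigl(\ell(w_0)-\ell(y)\bigr)-\bigl(\ell(w_0)-\ell(y')\bigr)=\ell(y')-\ell(y).
\]
This is the negative of $\ell(y)-\ell(y')$. Your ``top-down equals bottom-up'' argument proves $\ell(v')-\ell(w_0y)=\ell(y)-\ell(y')$, not (b). So Step~1 yields $q^{\ell(y')-\ell(y)}$ while Step~2 yields $q^{\ell(y)-\ell(y')}$, and these differ whenever $y\notin W^J$.

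The culprit is Step~2, where you took the displayed definition of $\sigma_1$ literally; that definition has the wrong sign. In $\mM=\mH\otimes_{\mH_J}\CC_{\mathrm{triv}}$ one has $H_s\otimes 1=q^{-1}M_1$ for $s\in J$, whereas the displayed formula gives $qM_1$. The correct normalization is $\sigma(H_y)=q^{\ell(y')-\ell(y)}M_{y'}$. It is also the only one compatible with the fact you assume, $\sigma(B_w)=\underline M_w$, since positive powers of $q$ would push lower coefficients out of $q^{-1}\ZZ[q^{-1}]$. The paper's proof silently uses this normalization, writing $\flip(\sigma(H_w))=q^{\ell(w'')-\ell(w)}M_{w_fw''}$. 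With it, the exponents agree by the computation above and your argument becomes the paper's.

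Your worry in (a) is legitimate, and the paper passes over it. Its assertion that the longest element of the $W_J$-coset of $w_0w$ is $w_0w''$ uses $w_0W_Jw_0=W_J$, i.e.\ $k=n-k$. Your first route, keeping the conventions exactly as stated, cannot succeed for $k\neq n-k$, because $\flip(M_w)=M_{w_fw}$ is then not even well defined. For $n=3$, $k=1$ (with $W=W_J\times W^J$ via $(x,y)\mapsto xy$) one has $W^J=\{1,s_1,s_1s_2\}$ and $w_f=s_1s_2$, so $w_fs_1=w_0\notin W^J$. Your fallback is the correct repair: let $\flip$ go from the $(k,n-k)$ module to the $(n-k,k)$ module, so that $\sf$ and $w_f$ are formed with respect to $J'$. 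You should commit to it rather than leave it as a contingency.
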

\begin{proof}
We decompose $\sf = \sf_2 \circ \sf_1$, so we attempt to prove that $\flip(\underline M_w) = \sf_2(\sf_1(\flip(B_{w})))$. 

We know that $B_w = H_{w} + \sum_{v \prec w} p_{v, w}H_v$, and $\underline M_w = \sigma(B_w)$. Therefore, it is sufficient to show that 
\begin{equation}\label{nondualeq}
\flip(\sigma(H_w)) = \sf_2(\sf_1(H_{w_0w})). 
\end{equation}
Let $w''$ be the element with smallest length in the $W_J$-coset of $w$. Then, \[\flip(\sigma(H_w)) = \flip(q^{\ell(w'')-\ell(w)}M_{w''}) = q^{\ell(w'')-\ell(w)}M_{w_fw''}.\]

If $w''$ is the element with smallest length in the $W_J$-coset of $w$, the longest element in the $W_J$-coset of $w_0w$ will be $w_0w''$. We have that $\sf_2(\sf_1(H_{w_0w})) = \sf_2(q^{-\ell(w_0w'')+\ell(w_0w)}S_{w_0w''})$. By Lemma~\ref{wequality}, this is equal to $\sf_2(q^{-\ell(w_0w'')+\ell(w_0w)}S_{w_{0, J}w_fw''}) = q^{-\ell(w_0w'')+\ell(w_0w)}M_{w_fw''}$. 

We have that $-\ell(w_0w'')+\ell(w_0w) = -(\ell(w_0) - \ell(w'')) + \ell(w_0) - \ell(w) = \ell(w'') - \ell(w)$. Therefore, the two sides of Equation~\ref{nondualeq} are equal. 
\end{proof}

\begin{proof}[Proof of Theorem~\ref{flipMN}]
We have that $\sigma^* \circ \phi = \psi \circ \sf$, which combined with $\phi(\flip(B_w)) = D_{w_0w}$ (see Proposition~\ref{flipinH}) implies that $\sigma^*(\phi(\flip(B_w))) = \sigma^*(D_{w_0w})$, which gives that $\psi(\sf(\flip(B_w)) = \sigma^*(D_{w_{0, J}w})$. Since $w \in W^J$, we can substitute $w = w_fx$ for some $x \in W_J$ to obtain $\psi(\sf(\flip(B_{w_fx})) = \sigma^*(D_{w_0w_fx}) = \sigma^*(D_{w_{0, J}x})$ where the last equality is from Lemma~\ref{wequality}. From Lemmas~\ref{dualaspherical} and~\ref{nondualaspherical}, we get that $\psi(\flip(\underline M_{w_fx})) = \underline Q_x$. 

As $\langle \underline N_w, \underline Q_x \rangle = (-1)^{\ell(x)}\delta_{w, x}$ given the pairing $\langle N_w, Q_x \rangle = (-1)^{\ell(x)} \delta_{w, x}$ due to the definition of the dual aspherical basis, the theorem is implied. 
\end{proof}
\subsection{Corollaries in Dual Spaces to Spherical and Aspherical Modules}
We obtain some corollaries in the dual spaces of spherical and aspherical modules as a byproduct of our results. In the proof of Theorem~\ref{flipMN}, we obtain the following corollary. 

\begin{cor}
We have that $\underline Q_w$ is equal to $\flip(\underline M_{w_fw})$ under the identification $M_w \mapsto Q_w$. 
\end{cor}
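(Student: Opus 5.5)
The corollary is read off from the chain of identities in the proof of Theorem~\ref{flipMN}, once the identification $\psi\colon \mM \to \mN^*$, $M_w \mapsto Q_w$, is made explicit. The statement is exactly $\psi(\flip(\underline M_{w_fw})) = \underline Q_w$ for $w$ in the relevant index set, so we need to isolate that equality.

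Fix $x$ and write the relevant element of $W^J$ as $w_fx$. First, Proposition~\ref{flipinH} gives $\phi(\flip(B_{w_fx})) = D_{w_0w_fx}$. Second, we check the intertwining relation $\sigma^*\circ\phi = \psi\circ\sf$. By definition, $\sf_1 = \phi^{-1}\circ\sigma_1^*\circ\phi$ and $\sf_2 = \psi^{-1}\circ\sigma_2^*\circ\psi$, so the relation reduces to unwinding these definitions; the only care needed is that $\sf_1$ lands in the span of the $H_{w'}$ with $w'$ a longest coset representative, where $\sigma_2^*$ and $\sf_2$ agree.

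Applying this relation to $\flip(B_{w_fx})$ gives
\[
\psi\bigl(\sf(\flip(B_{w_fx}))\bigr) = \sigma^*(D_{w_0w_fx}).
\]
By Lemma~\ref{wequality}, $w_0 = w_{0,J}w_f$, which lets us rewrite the index $w_0w_fx$ as $w_{0,J}x$ (this is the same manipulation made in the proof of Theorem~\ref{flipMN}). Lemma~\ref{dualaspherical} then gives $\sigma^*(D_{w_{0,J}x}) = \underline Q_x$. On the left-hand side, Lemma~\ref{nondualaspherical} replaces $\sf(\flip(B_{w_fx}))$ by $\flip(\underline M_{w_fx})$. Combining the two sides yields
\[
\psi\bigl(\flip(\underline M_{w_fx})\bigr) = \underline Q_x,
\]
which is the claim under the identification $M_w \mapsto Q_w$.

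The main obstacle is bookkeeping rather than new mathematics. The coset conventions (whether $x$ lies in $W_J$ or $W^J$, and whether we use $w_0w_fx$ or $w_fx$) must be kept consistent with the hypotheses of Lemmas~\ref{dualaspherical} and~\ref{nondualaspherical}. I would fix one convention, namely that $\underline M_{w_fx}$ is indexed by a genuine element of $W^J$, and check that each cited lemma is applied only within its stated range.
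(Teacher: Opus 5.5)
Your outline is the paper's own argument. The corollary is the equality $\psi(\flip(\underline M_{w_fx}))=\underline Q_x$ read off from the proof of Theorem~\ref{flipMN}, via Proposition~\ref{flipinH}, the intertwining $\sigma^*\circ\phi=\psi\circ\sigma^{\mathrm{flip}}$, Lemma~\ref{wequality}, and Lemmas~\ref{dualaspherical} and~\ref{nondualaspherical}.

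The step that fails is the one you import from that proof: rewriting $w_0w_fx$ as $w_{0,J}x$. Lemma~\ref{wequality} gives $w_0=w_{0,J}w_f$, hence $w_0w_fx=w_{0,J}w_f^2x$. Now $w_f^2=1$ only when $w_f=w_{0,J}w_0$ is an involution, i.e.\ when $w_0$ commutes with $w_{0,J}$, i.e.\ when $w_0Jw_0=J$. For $J=S\setminus\{s_k\}$ with $0<k<n$ this forces $n=2k$. Concretely, take $n=3$, $J=\{s_1\}$. Then $W^J=\{1,s_2,s_2s_1\}$, $w_f=s_2s_1$, and $w_0w_f=s_2\neq s_1=w_{0,J}$. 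So already at $x=1$ you get $\sigma^*(D_{s_2})$, and $s_2$ is not of the form $w_{0,J}y$ with $y\in W^J$, so Lemma~\ref{dualaspherical} does not apply.

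The same example shows that the bookkeeping check you postpone to your last paragraph cannot be passed. We have $w_fs_2=w_0\notin W^J$, so $M_y\mapsto M_{w_fy}$ is not even a well-defined map on $\mM$. Even formally, $\flip(\underline M_{w_fx})$ has leading term $M_{w_f^2x}$ rather than $M_x$. Switching to the convention $W=W^JW_J$ does not rescue it either. There $w_f=w_0w_{0,J}$, so $w_0w_fx=w_{0,J}x$ does hold, but the longest element of the coset of $x$ becomes $xw_{0,J}$, and Lemma~\ref{dualaspherical} would be needed for $D_{xw_{0,J}}$ instead. The paper's proof makes the identical move (``where the last equality is from Lemma~\ref{wequality}''). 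So you have inherited this gap rather than created it, but as it stands the argument is valid only when $w_0Jw_0=J$.

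There are two repairs.

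\emph{Restrict to $n=2k$.} Then $w_f=w_0w_{0,J}=w_{0,J}w_0$ is an involution, left multiplication by $w_f$ permutes $W^J$, and your chain goes through verbatim.

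\emph{Let the flip act on the right,} which is what right cosets $W_Jy$ call for. Put $\flip(H_w)=H_{ww_0}$ on $\mH$ and $\flip(M_y)=M_{w_{0,J}yw_0}$ on $\mM$; here $w_{0,J}yw_0$ is the shortest element of $W_Jyw_0$. Then:
\begin{itemize}
\item Proposition~\ref{flipinH} becomes $\phi(\flip(B_v))=D_{vw_0}$. This holds because conjugation by $w_0$ is a diagram automorphism, so $p_{uw_0,ww_0}=p_{w_0u,w_0w}$.
\item The computation in Lemma~\ref{nondualaspherical} goes through. With $w''$ the shortest element of $W_Jw$, the longest element of $W_Jww_0$ is $w''w_0$ and the shortest is $w_{0,J}w''w_0$.
\item Taking $v=w_{0,J}xw_0\in W^J$ gives $vw_0=w_{0,J}x$ directly, with no appeal to Lemma~\ref{wequality}.
\end{itemize}
The chain then yields $\psi(\flip(\underline M_{w_{0,J}xw_0}))=\underline Q_x$. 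So the index paired with $\underline Q_x$ should be $w_{0,J}xw_0$ rather than $w_fx$; the two agree at $x=1$.
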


Let $R_w$ be the standard basis elements and let $\underline R_w$ be the canonical basis elements of $\mM^*$. As $(-1)^{\ell(x)}\delta_{x, w} = \langle \flip(\underline M_{w_fw}), \underline N_{x} \rangle = \langle \flip(\underline M_{w}), \underline N_{w_fx} \rangle = \langle \underline M_{w}, \flip(\underline N_{w_fx}) \rangle$, we also obtain the following corollary. 

\begin{cor}
We have that $\underline R_w$ is equal to $\flip(\underline N_{w_fw})$ under the identification $N_w \mapsto R_w$. 
\end{cor}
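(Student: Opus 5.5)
The statement to prove is the second corollary: $\underline R_w$, the canonical basis element of $\mM^*$, equals $\flip(\underline N_{w_fw})$ under the identification $N_w \mapsto R_w$. I would mirror the proof of the preceding corollary, exchanging the roles of $\mM$ and $\mN$. The cleanest route is through the pairing. The definitions $\langle M_w, N_x\rangle = (-1)^{\ell(x)}\delta_{w,x}$ and $\langle R_w, M_x\rangle$ (defined analogously) identify $\mN$ with $\mM^*$ via $N_w \mapsto R_w$ up to the sign convention. The canonical basis $\underline R_w$ of $\mM^*$ is characterized as the basis dual to $\{\underline M_x\}$, with the same signs $(-1)^{\ell(x)}\delta_{w,x}$ used for $\underline Q$ in the proof of Theorem~\ref{flipMN}. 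So it suffices to show that the pairing of $\flip(\underline N_{w_fw})$ against $\underline M_x$ is $(-1)^{\ell(x)}\delta_{x,w}$, with indices adjusted as needed.

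The first step is to check that $\flip$ is self-adjoint for the pairing, that is, $\langle \flip(a), b\rangle = \langle a, \flip(b)\rangle$. On standard bases, $\langle M_{w_fw}, N_x\rangle$ and $\langle M_w, N_{w_fx}\rangle$ are both supported on $x = w$. Here I use $w \mapsto w_fw$ as an involution on the index set, which is justified by Lemma~\ref{wequality}. The signs $(-1)^{\ell(x)}$ versus $(-1)^{\ell(w_fx)}$ differ by the global constant $(-1)^{\ell(w_f)}$ together with parity issues. I would track this and absorb it into the normalization.

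The second step is the chain displayed just before the statement, which comes from Theorem~\ref{flipMN}:
\[(-1)^{\ell(x)}\delta_{x,w} = \langle \flip(\underline M_{w_fw}), \underline N_x\rangle = \langle \underline M_w, \flip(\underline N_{w_fx})\rangle.\]
This says that $\{\flip(\underline N_{w_fx})\}_x$ is the basis dual to $\{\underline M_w\}$ under the pairing.

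The third step is to identify this dual basis with $\underline R$. Dualizing the bar involution and the triangularity conditions on $\underline M$ shows that the dual basis is bar-invariant for the dual involution and unitriangular with off-diagonal coefficients in $q^{-1}\ZZ[q^{-1}]$, taken in the reversed Bruhat order. Uniqueness of canonical bases, in the style of Deodhar, then gives equality with $\underline R$. Alternatively, I would argue directly, as in Lemma~\ref{dualaspherical}, via a surjection $\mH^*\to\mM^*$ and Proposition~\ref{flipinH}.

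The main obstacle I expect is the bookkeeping of signs and indices: the $(-1)^{\ell}$ factors and whether the final index is $w$ or $w_fw$. The reindexing $x \leftrightarrow w_fx$ needs $\ell(w_fx)$ to relate to $\ell(x)$ by a fixed parity. That fails in general, so it is likely that the sign must be moved into the identification $N_w\mapsto (\pm)R_w$. I would fix conventions first on a small example, $n=2$, $k=1$.
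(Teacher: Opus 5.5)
Your route is the paper's: the corollary is read off from the chain $(-1)^{\ell(x)}\delta_{x,w}=\langle \flip(\underline M_{w_fw}),\underline N_x\rangle=\langle \flip(\underline M_w),\underline N_{w_fx}\rangle=\langle \underline M_w,\flip(\underline N_{w_fx})\rangle$. This says $\{\flip(\underline N_{w_fx})\}_x$ is the basis dual to $\{\underline M_x\}$, hence equals $\underline R$. Your third step (checking bar-invariance and triangularity of the dual basis, then invoking uniqueness) is more careful than the paper. The paper simply takes the canonical basis of a dual module to be the dual basis, as it did for $\underline Q$.

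As written, however, your proposal does not close, and the obstacle you single out is misdiagnosed. The map $u\mapsto(-1)^{\ell(u)}$ is the sign character of $W$, hence multiplicative. So $(-1)^{\ell(w_fx)}=(-1)^{\ell(w_f)}(-1)^{\ell(x)}$ for every $x$, and there is no residual parity problem. Track the sign explicitly:
\begin{itemize}
\item Reindexing $(w,x)\mapsto(w_fw,w_fx)$ in Theorem~\ref{flipMN} gives $\langle\flip(\underline M_w),\underline N_{w_fx}\rangle=(-1)^{\ell(w_f)}(-1)^{\ell(x)}\delta_{x,w}$.
\item On standard bases one checks $\langle\flip(u),v\rangle=(-1)^{\ell(w_f)}\langle u,\flip(v)\rangle$.
\end{itemize}
The two factors of $(-1)^{\ell(w_f)}$ cancel, and you get exactly $\langle\underline M_w,\flip(\underline N_{w_fx})\rangle=(-1)^{\ell(x)}\delta_{x,w}$. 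The paper's display silently drops both factors. So no sign belongs in the identification $N_w\mapsto R_w$. Indeed, $\flip(\underline N_{w_fw})$ already has coefficient $1$ on $N_w$, as $\underline R_w$ must have on $R_w$, so inserting a $\pm$ would break the normalization.

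The step that genuinely needs care is one you pass over quickly. Both the reindexing and the adjointness computation require $w\mapsto w_fw$ to be an involution of $W^J$. Relatedly, $\langle M_{w_fw},N_x\rangle$ is supported on $x=w_fw$, not on $x=w$. Lemma~\ref{wequality} does not provide this involution property. It only gives $w_f=w_{0,J}w_0$, so $w_f^2=w_{0,J}\,(w_0w_{0,J}w_0)$. This equals $1$ only when $w_0$ normalizes $W_J$; for $J=S\setminus\{s_k\}$ with $0<k<n$, that means $2k=n$. The paper's one-line proof performs the same reindexing without comment, implicitly treating $\flip$ as an involution of $\mM$ and $\mN$. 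You should record this as a hypothesis on $\flip$ rather than derive it from Lemma~\ref{wequality}.
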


With these results, we can compute the canonical bases in $\mH^*$, $\mN^*$, and $\mM^*$ from the canonical bases in $\mH$, which is an interesting result, even independent from the spin representation. 

\begin{rem}
Our approach allows us to work with the spin representation through the Hecke algebra, which is more convenient as the bar involution in the Hecke algebra is simpler than the involution in the spin representation. 
\end{rem}

\section{Alternative Axiomatic Definition of the Canonical Basis}\label{canonicalsec}

In this section, we describe an alternative axiomatic definition of the canonical basis, leading to a more explicit computation of it. 

\begin{lem}
We can define a right action of $\TL_n$ on $((\CC^2)^{\otimes n})$ by defining the map $\zeta^*\colon \TL_n \rightarrow \End((\CC^2)^{\otimes n})$ such that 

\[ \zeta^*(e_i) = \displaystyle \begin{pmatrix}
0 & 0 & 0 & 0\\
0 & -q^{-1} & 1 & 0\\
0 & 1 & -q & 0\\
0 & 0 & 0 & 0
\end{pmatrix}_{n-i, n-i+1}. \] 
\end{lem}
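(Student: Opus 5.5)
We need to show that $\zeta^*$ extends to an algebra anti-homomorphism $\TL_n \rightarrow \End((\CC^2)^{\otimes n})$, which is exactly what a right action requires. Equivalently, the matrices $E_i := \zeta^*(e_i)$ must satisfy the Temperley-Lieb relations with the order of products reversed. The plan is to write $P$ for the $4\times 4$ block above and factor it as a column times a row, exactly as in the remark after Lemma~\ref{spinreplem}:
\[ P = \delta^* \eps^*, \qquad \delta^* = (0,1,-q,0)^T,\quad \eps^* = (0,-q^{-1},1,0), \]
and then check the relations using this factorization.

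The quadratic relation is immediate: $P^2 = \delta^*(\eps^*\delta^*)\eps^*$, and $\eps^*\delta^* = -q^{-1} - q = \beta$, so $E_i^2 = \beta E_i$. The commuting relation $E_iE_j = E_jE_i$ for $|i-j|\geq 2$ holds because $E_i$ acts on components $n-i, n-i+1$ and $E_j$ acts on the disjoint pair $n-j, n-j+1$.

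Because the relation $e_ie_{i\pm1}e_i = e_i$ is palindromic, reversing the order of multiplication leaves it unchanged. So it suffices to check $E_iE_{i\pm1}E_i = E_i$. The index shift $i \mapsto n-i$ only relabels positions, so this reduces to the three-strand identities
\[ (P\otimes \id)(\id\otimes P)(P\otimes \id) = P\otimes\id, \qquad (\id\otimes P)(P\otimes\id)(\id\otimes P) = \id \otimes P \]
on $(\CC^2)^{\otimes 3}$.

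To verify these, I will use the zigzag identity $(\id\otimes\eps^*)(\delta^*\otimes\id) = \id = (\eps^*\otimes\id)(\id\otimes\delta^*)$, in analogy with relation (ii) of Theorem~\ref{tlcat}. Once the zigzag holds, the triple products collapse:
\[ (P\otimes\id)(\id\otimes P)(P\otimes\id) = (\delta^*\otimes\id)\bigl[(\eps^*\otimes\id)(\id\otimes\delta^*)\bigr]\bigl[(\id\otimes\eps^*)(\delta^*\otimes\id)\bigr](\eps^*\otimes\id). \]
The two bracketed factors are each the identity, leaving $\delta^*\eps^*\otimes\id = P\otimes\id$. The mirror identity follows in the same way.

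The main obstacle is the zigzag check itself. It is a small computation on basis vectors $v_\pm$. For example, $(\id\otimes\eps^*)(\delta^*\otimes\id)(v_+) = (\id\otimes\eps^*)(v_+\otimes v_-\otimes v_+ - q\, v_-\otimes v_+\otimes v_+)$. Since $\eps^*(v_-\otimes v_+) = 1$ and $\eps^*(v_+\otimes v_+) = 0$, this gives $v_+$, and the remaining cases are handled similarly.

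An alternative that avoids this computation: observe that $P$ is the matrix of the left-action block of Lemma~\ref{spinreplem} with $q$ replaced by $q^{-1}$. Since $\beta = -q-q^{-1}$ is invariant under $q\mapsto q^{-1}$, the left-action relations transfer directly. Reversing the tensor positions then converts the left action into one whose products can be read in reverse order, giving the right action. I would present the direct zigzag computation as the main argument and mention this symmetry as a sanity check.
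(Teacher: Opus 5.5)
Your proposal is correct and takes the same route as the paper, whose proof is the one-line assertion that the Temperley--Lieb relations can be checked. Your factorization $P=\delta^*\eps^*$ with $\eps^*\delta^*=\beta$ and the zigzag identities is a clean way to carry out that check. Your remark that the relations are invariant under reversing products correctly explains why satisfying them yields a right action.
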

\begin{proof}
We can verify that all the relations hold. 
\end{proof}

With this action, we can describe a new axiomatic definition of the canonical basis. Given a canonical basis element $v_{k_n} \dia \dots \dia v_{k_1}$ (see \cite{khovanov}), let $d(v_{k_n} \dia \dots \dia v_{k_1})$ denote the diagram associated with the label of it using the procedure described in Section~\ref{diagrambij}. Furthermore, if diagram $D$ corresponds to canonical basis element $v_{k_n} \heart \dots \heart v_{k_1}$ by the bijection described in Section~\ref{diagrambij}, then define $\flip(D)$ to be the diagram corresponding to $v_{k_1} \heart \dots \heart v_{k_n}$. 

\begin{thm}\label{canonicalthm}
Given a canonical basis element $v_{k_n} \dia \dots \dia v_{k_1}$, let $D_1$ denote the diagram $d(v_{k_n} \dia \dots \dia v_{k_1})$. The canonical basis element $v_{k_n} \dia \dots \dia v_{k_1}$ can be defined as the unique element $w \in (\CC^2)^{\otimes n}$ such that under the pairing defined in Theorem~\ref{khovanovdual}, \[\langle v_- \otimes \dots \otimes v_- \otimes v_+ \otimes \dots \otimes v_+, \zeta^*(D_2)w \rangle = \delta_{D_2, \flip(D_1)}\]
for all possible $D_2$. 
\end{thm}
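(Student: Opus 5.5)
The plan is to reduce the statement to the duality of Theorem~\ref{khovanovdual}, by moving $\zeta^*(D_2)$ across the pairing. Write $u_0 = v_- \otimes \dots \otimes v_- \otimes v_+ \otimes \dots \otimes v_+$.

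\textbf{Adjointness.} First I would check that $\zeta^*$ is the adjoint of $\zeta$ with respect to the reversed pairing of Theorem~\ref{khovanovdual}, i.e.
\[\langle \zeta(e_i) x, y\rangle = \langle x, \zeta^*(e_i) y\rangle\]
for all $x,y \in (\CC^2)^{\otimes n}$. Since the pairing matches the $j$th tensor factor of $x$ with the $(n-j+1)$th factor of $y$, the factors $i,i+1$ of $x$ meet the factors $n-i+1, n-i$ of $y$. This explains the indices $n-i, n-i+1$ in the definition of $\zeta^*$. The check is then a $4\times 4$ computation on $\CC^2\otimes\CC^2$. The reversal of the two factors conjugates the middle block of $\zeta(e_i)$, turning $\begin{pmatrix} -q & 1\\ 1 & -q^{-1}\end{pmatrix}$ into $\begin{pmatrix} -q^{-1} & 1\\ 1 & -q\end{pmatrix}$, which is exactly the block of $\zeta^*(e_i)$.

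Extending multiplicatively gives $\langle \zeta(D)x,y\rangle = \langle x,\zeta^*(\tilde D)y\rangle$, where $\tilde D$ is $D$ with the order of generators reversed. This reversal is the anti-involution turning a diagram upside down. Because $\zeta^*$ is a right action, the order reversal is absorbed, so we may simply write $\langle u_0, \zeta^*(D_2) w\rangle = \langle \zeta(D_2)u_0, w\rangle$ up to this relabeling.

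\textbf{Reduction to duality.} By Proposition~\ref{tlactionprop} and the bijection of Section~\ref{diagrambij}, $\zeta(D_2)u_0$ is the dual canonical basis element labeled by the diagram $D_2$ when $D_2$ is a basis diagram of $\TLinduced$. Otherwise $\zeta(D_2)u_0$ is zero or (I would argue) again a dual canonical element coming from the appropriate $k$. So the condition becomes
\[\langle v_{l_n}\heart\dots\heart v_{l_1}, w\rangle = \delta_{D_2,\flip(D_1)}.\]
By Theorem~\ref{khovanovdual}, the canonical basis element $v_{k_n}\dia\dots\dia v_{k_1}$ pairs to $1$ exactly with the dual canonical element with the reversed label $v_{k_1}\heart\dots\heart v_{k_n}$, and to $0$ with every other dual canonical element. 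By the definition of $\flip(D)$, the reversed label corresponds to $\flip(D_1)$. Hence the canonical basis element satisfies the condition.

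\textbf{Uniqueness.} The dual canonical basis spans $(\CC^2)^{\otimes n}$ by Proposition~\ref{tlactionprop}, taken over all $k$. So the values of $\langle \cdot, w\rangle$ on all of its elements determine $w$, because the pairing is nondegenerate.

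\textbf{Main obstacle.} The step I expect to be hardest is the bookkeeping in the adjointness step. One must make sure the reversed pairing, the index shift $n-i$, and the right-action convention combine so that $D_2$ itself, and not its upside-down mirror, appears, consistently with how $\flip$ is defined. A second delicate point is the vectors $u_0$ for different $k$. Pairing with $\zeta(D_2)u_0$ only reaches dual canonical elements in $(\CC^2)^{\otimes n}_k$, so "all possible $D_2$" must range over the diagram bases of every $\TLinduced$, each with its own $u_0$. I would make that convention explicit, and then the weight-space decomposition makes the uniqueness argument go through componentwise.
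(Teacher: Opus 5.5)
Your proposal is correct and follows the paper's own argument: show that $\zeta^*$ is adjoint to $\zeta$ under the reversed pairing, identify $\zeta(D_2)u_0$ with the dual canonical element attached to $D_2$ (the paper cites Theorem~\ref{diagramdualcomp} for this), and then apply Theorem~\ref{khovanovdual} together with nondegeneracy of the pairing. You are more careful than the paper on two points it glosses over: $\zeta^*$ must be treated as an anti-homomorphism for $D_2$ itself, rather than its reverse, to appear; and uniqueness of $w$ requires $D_2$, each with its own $u_0$, to range over every $k$, since a single $k$ only determines the component of $w$ in $(\CC^2)^{\otimes n}_k$.
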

\begin{proof}
First, we note that $\zeta^*$ is constructed to be the unique map such that $\langle D \cdot v, w \rangle = \langle v, \zeta^*(D)w \rangle$ for $v$, $w \in (\CC^2)^{\otimes n}$. By Theorem~\ref{khovanovdual}, we know that \[\langle v_{k_n} \heart \dots \heart v_{k_1}, v_{k_n} \dia \dots \dia v_{k_1} \rangle = \delta_{d(v_{k_n} \heart \dots \heart v_{k_1}), \flip(d(v_{k_n} \dia \dots \dia v_{k_1}))}.\] Let $D_2$ be equal to $d(v_{k_n} \heart \dots \heart v_{k_1})$. 
By Theorem~\ref{diagramdualcomp}, we obtain that 
\begin{align*}
\langle v_{k_n} \heart \dots \heart v_{k_1}, v_{k_n} \dia \dots \dia v_{k_1} \rangle &= \langle D_2 \cdot v_- \otimes \dots \otimes v_- \otimes v_+ \otimes \dots \otimes v_+, v_{k_n} \dia \dots \dia v_{k_1} \rangle\\
&= \langle v_- \otimes \dots \otimes v_- \otimes v_+ \otimes \dots \otimes v_+, \zeta^*(D_2)(v_{k_n} \dia \dots \dia v_{k_1}) \rangle.\\
\end{align*}
Therefore, $v_{k_n} \dia \dots \dia v_{k_1}$ is the unique element in $(\CC^2)^{\otimes n}$ satisfying the pairing condition described in the theorem. 
\end{proof}
%
%
%
%
%
%

\section*{Acknowledgements}
The author would like to thank their mentor Dr.~Vasily Krylov for his knowledge, guidance, and enthusiasm in answering questions. Furthermore, the author would like to thank the PRIMES-USA program and its director Dr.~Slava Gerovitch for giving them this incredible opportunity to learn and conduct research. Finally, the author would like to thank Dr.~Tanya Khovanova and Ivan Motorin for helpful feedback on the paper. 
%

\bibliographystyle{plain}
\bibliography{refs}
\end{document}